\newtheorem{theorem}{Theorem}
\newtheorem{proposition}[theorem]{Proposition}
\newtheorem{corollary}[theorem]{Corollary}
\newtheorem{lemma}[theorem]{Lemma}
\theoremstyle{definition}
\newtheorem{definition}[theorem]{Definition}
\newtheorem{remark}[theorem]{Remark}
\newtheorem{example}[theorem]{Example}
\newtheorem{convention}[theorem]{Convention}
\numberwithin{equation}{section}
\numberwithin{theorem}{section}
\newcommand{\G}{\Gamma}
\renewcommand{\d}{\delta}
\renewcommand{\k}{\kappa}
\renewcommand{\l}{\lambda}
\renewcommand{\o}{\omega}
\renewcommand{\r}{\rho}
\newcommand{\p}{\psi}
\renewcommand{\P}{\Psi}
\DeclareMathOperator{\Fix}{Fix}
\DeclareMathOperator{\tr}{tr}
\DeclareMathOperator{\st}{st}
\newcommand{\al}{\alpha}
\newcommand{\gam}{\gamma}
\newcommand{\Del}{\Delta}
\newcommand{\ep}{\epsilon}
\newcommand{\thet}{\theta}
\newcommand{\Lam}{\Lambda}
\newcommand{\lam}{\lambda}
\newcommand{\Sig}{\Sigma}
\newcommand{\sig}{\sigma}
\newcommand{\om}{\omega}
\newcommand{\Cc}{{\mathcal C}}
\newcommand{\N}{{\mathbb N}}
\newcommand{\R}{{\mathbb R}}
\newcommand{\Z}{{\mathbb Z}}
\newcommand{\Q}{{\mathbb Q}}
\newcommand{\calA}{\mathcal{A}}
\newcommand{\calC}{\mathcal{C}}
\newcommand{\calG}{\mathcal{G}}
\newcommand{\calI}{\mathcal{I}}
\newcommand{\calL}{\mathcal{L}}
\newcommand{\calM}{\mathcal{M}}
\newcommand{\calP}{\mathcal{P}}
\newcommand{\calV}{\mathcal{V}}
\newcommand{\calX}{\mathcal{X}}
\newcommand{\calZ}{\mathcal{Z}}
\newcommand{\bbH}{\mathbb{H}}
\newcommand{\bbP}{\mathbb{P}}
\newcommand{\bbW}{\mathbb{W}}
\newcommand{\bbZ}{\mathbb{Z}}
\newcommand{\frakC}{\mathfrak{C}}
\newcommand{\frakG}{\mathfrak{G}}
\newcommand{\frakX}{\mathfrak{X}}
\newcommand{\frakh}{\mathfrak{h}}
\newcommand{\frakw}{\mathfrak{w}}
\def\({\left(}
\def\){\right)}
\def\l\{{\left\{}
\def\r\}{\right\}}
\def\wt{\widetilde}
\def\wh{\widehat}
\DeclareMathOperator{\diam}{diam}
\newcommand{\bs}{\backslash}
\newcommand{\Ra}{\Rightarrow}
\newcommand{\ra}{\rightarrow}
\newcommand{\loopra}{\looparrowright}
\newcommand{\lra}{\longrightarrow}
\newcommand{\trieq}{\trianglelefteq}
\newcommand{\nil}{\varnothing}
\newcommand{\ssm}{\smallsetminus}
\newcommand{\what}{\widehat}
\newcommand{\ov}[1]{\overline{#1}}
\newcommand{\inn}[2]{\langle #1,#2\rangle}
\newcommand{\wtilde}{\widetilde}
\def\wbar{\overline}
\def\id{{\rm id}}
\def\Cay{{\rm Cay}}
\def\Leb{{\rm Leb}}
\def\ev{{\rm ev\,}}
\def\1{{\bf 1}}
\def\gr{{\rm gr}}
\def\conj{{\bf conj}}
\def\ac{{\rm ac}}
\def\sing{{\rm sing}}
\def\H{{\mathbb H}}
\def\bS{{\bf S}}
\def\CAT{{\rm CAT}}
\def\Dil{{\rm Dil}}
\newcommand\numberthis{\addtocounter{equation}{1}\tag{\theequation}}
   \newcommand{\SC}[1]{
     {\color{red} \bf{(SC: #1)}}
   }
  \newcommand{\RT}[1]{
    {\color{blue} \bf{(RT: #1)}}
  }
\date{\today}
\title[Rigidity and statistics for group actions on $\text{CAT}(0)$ cube complexes]{Rigidity phenomena and the statistical properties of group actions on $\text{CAT}(0)$ cube complexes}
\author{\small{Stephen Cantrell and Eduardo Reyes}}
\begin{document}
\maketitle
\begin{abstract}
We compare the marked length spectra of some pairs of proper and cocompact cubical actions of a non-virtually cyclic group on $\CAT(0)$ cube complexes. The cubulations are required to be virtually co-special, have the same sets of convex-cocompact subgroups, and admit a contracting element.  
There are many groups for which these conditions are always fulfilled for any pair of cubulations, including non-elementary cubulable hyperbolic groups, many cubulable relatively hyperbolic groups, and many right-angled Artin and Coxeter groups. 

For these pairs of cubulations, we study the Manhattan curve associated to their combinatorial metrics. We prove that this curve is analytic and convex, and a straight line if and only if the marked length spectra are homothetic. The same result holds if we consider invariant combinatorial metrics in which the lengths of the edges are not necessarily one. In addition, for their standard combinatorial metrics, we prove a large deviation theorem with shrinking intervals for their marked length spectra. We deduce the same result for pairs of word metrics on hyperbolic groups. 

The main tool is the construction of a finite-state automaton that simultaneously encodes the marked length spectra of both cubulations in a coherent way, in analogy with results about (bi)combable functions on hyperbolic groups by Calegari-Fujiwara \cite{calegari-fujiwara}. The existence of this automaton allows us to apply the machinery of thermodynamic formalism for suspension flows over subshifts of finite type, from which we deduce our results. 
\end{abstract}
\maketitle


\section{Introduction}

In this work we study rigidity phenomena and the statistical properties of group actions on $\CAT(0)$ cube complexes and the methods we use exploit the interplay between geometric group theory and dynamics. Group actions on $\CAT(0)$ cube complexes are nowadays a central object of study. Since the influential work of Sageev \cite{sageev}, we have known that many groups admit proper and cocompact cubical actions on $\CAT(0)$ cube complexes (in the sequel, referred to as \emph{cubulations}), such as small cancellation
groups \cite{martin-steenbock,wise.small}, many 3-manifold groups \cite{bergeron-wise,hagen-przytycki,przytycki-wise.1,przytycki-wise.2,tidmore}, Coxeter groups \cite{niblo-reeves}, many Artin groups \cite{charney-davis,godelle-paris}, random groups at low density \cite{odrzygozdz,ollivier-wise}, 1-relator groups with torsion \cite{lauer-wise,stucky,wise}, hyperbolic free-by-cyclic groups \cite{hagen-wise.1,hagen-wise.2}, and so on. In particular, the fundamental groups of (compact) \emph{special cube complexes} introduced by Haglund and Wise \cite{haglund-wise.special} form a very rich class of convex-cocompact subgroups of right-angled Artin groups, and they played a key role in the resolution of the Virtual Haken and Virtual Fibering Conjectures \cite{agol,wise}. 

In general, when non-empty, the space of geometric actions of a given group on $\CAT(0)$ cube complexes is quite large. For example, each filling multicurve on a closed hyperbolic surface is dual to a cubulation of its fundamental group \cite{sageev}. 
Similarly, cubulations for fundamental groups of cusped hyperbolic 3-manifolds can be obtained from their vast sets of (relatively) quasiconvex surface subgroups \cite{bergeron-wise,cooper-futer,kahn-markovic}. 
Moreover, cubulations can be used to define deformation spaces, such as the classical Culler-Vogtmann outer space \cite{culler-vogtmann} that encodes geometric actions of free groups on trees. This perspective has been extended to right-angled Artin groups, for which outer spaces have been constructed using cubulations with some particular special cube complexes as quotients \cite{BCV,CSV}.

Under some reasonable irreducibility assumptions, actions on $\CAT(0)$ cube complexes are \emph{marked length-spectrum rigid} \cite{beyrer-fioravanti.1,beyrer-fioravanti.2}. More precisely, let $\calX$ be a cubulation of a group $\G$ and let $\conj=\conj(\G)$ denote the set of conjugacy classes of $\G$. The \emph{(stable) translation length} of this action is the function $\ell_\calX:\conj \ra \R$ given by
\[\ell_\calX[g]=\lim_{n\to \infty}{\frac{d_\calX(g^n x,x)}{n}},\]
where $d_\calX$ denotes the combinatorial metric on the 1-skeleton of $\calX$ and the limit above is independent of the representative $g\in [g]$ and the vertex $x\in \calX$.

For two cubulations $\calX,\calX_\ast$ of $\G$ on $\CAT(0)$, marked length-spectrum rigidity states that the equality of translation length functions $\ell_\calX=\ell_{\calX_\ast}$ implies the existence of a $\G$-equivariant cubical isometry from $\calX$ onto $\calX_\ast$. Since in general the translation length functions $\ell_\calX$ and $\ell_{\calX_\ast}$ will not coincide, it is natural to ask about the behavior of $\ell_{\calX_\ast}[g]$ when $\ell_\calX[g]$ is large. The goal of this paper is to address this question for ``compatible'' pairs of virtually co-special cubulations i.e., those having quotients with a special cube complex as a finite cover. Such compatibility is described in Definition \ref{def.classX}, and is guaranteed for any group in the following class. 
\begin{definition}\label{def.frakG}
    Let $\frakG$ be the class of non-virtually cyclic groups $\G$ satisfying the following:
    \begin{enumerate}
        \item $\G$ admits a proper, cocompact and virtually co-special action on a $\CAT(0)$ cube complex.
        \item The class of convex-cocompact subgroups of $\G$ is the same with respect to any proper and cocompact action on a $\CAT(0)$ cube complex. 
        \item Some (equivalently, any) proper and cocompact action of $\G$ on a $\CAT(0)$ cube complex has a contracting element.
    \end{enumerate}
\end{definition}

By Agol's theorem \cite[Thm.~1.1]{agol} and the characterization of convex-cocompact subgroups in terms of quasiconvexity \cite[Prop.~7.2]{haglund-wise.special}, we see that every cubulable non-elementary hyperbolic group belongs to $\frakG$. Moreover, the class $\frakG$ is closed under relative hyperbolicity, in the sense that a cubulable relatively hyperbolic group belongs to $\frakG$ as long as its peripheral subgroups belong to $\frakG$. In particular, any $C'(1/6)$ small cancellation quotient of the free product of finitely many groups in $\frakG$ belongs to $\frakG$ \cite{martin-steenbock}. However, this class is much larger since it contains some infinite families of right-angled Artin and Coxeter groups, most of them not being relatively hyperbolic with respect to any collection of proper subgroups. For instance, any right-angled Artin group with finite outer automorphism group belongs to $\frakG$.  See Proposition \ref{prop.groupsfornicecubulation} for the precise statement.

\subsection{Manhattan curves}  
Let $\calX,\calX_\ast$ be two cubulations of the group $\G$. We endow these cubulations with $\G$-invariant orthotope structures $\frakw,\frakw_\ast$ respectively, consisting of (non-necessarily integer) positive lengths assigned to the hyperplanes which are invariant under the action of $\G$. This induces isometric actions of $\G$ on the cuboid complexes $\calX^\frakw=(\calX,\frakw)$ and $\calX_\ast^{\frakw_\ast}=(\calX_\ast,\frakw_\ast)$, see Section \ref{subsec.prelimcub} for further details. The \emph{Manhattan curve} for the pair $(\calX^\frakw, \calX^{\frakw_\ast}_\ast)$ is the boundary of the convex set
\[
\mathcal{C}_{\calX^{\frakw_\ast}_\ast/\calX^\frakw}:=\left\{ (a,b) \in \R^2 : \sum_{[g] \in \conj(\G)} e^{-a\ell^{\frakw}_\calX[g] - b\ell^{\frakw_\ast}_{\calX_\ast}[g]} < \infty \right\},
\]
where $\ell_\calX^{\frakw}$ and $\ell^{\frakw_\ast}_{\calX_\ast}$ are the respective translation length functions of the actions of $\G$ on the 1-skeleta of $\calX^\frakw$ and $\calX_\ast^{\frakw_\ast}$. We can parameterize this curve as $s \mapsto \theta_{\calX^{\frakw_\ast}_\ast/{\calX^\frakw}}(s)$, where for $s \in \R$, $\theta_{\calX^{\frakw_\ast}_\ast/{\calX^\frakw}}(s)$ is the abscissa of convergence of the series
\[
t \mapsto \sum_{[g] \in \conj(\G)} e^{-t\ell^\frakw_\calX[g] - s\ell^{\frakw_\ast}_{\calX_\ast}[g]}.
\]
By abuse of notation, we also call the parametrization $\theta_{\calX^{\frakw_\ast}_\ast/\calX^\frakw}$ the Manhattan curve of $(\calX^\frakw,\calX^{\frakw_\ast}_\ast)$. 

Manhattan curves are useful tools for studying pairs of actions and are related to rigidity results, as well as recovering asymptotic invariants. They were introduced by Burger \cite{burger} for pairs of convex-cocompact representations of a group on rank 1 symmetric spaces, and later Sharp \cite{sharp.3} proved that they are analytic for pairs of cocompact Fuchsian representations. Sharp also extended these results for pairs of points in the outer space of a free group \cite{sharp.1,sharp.2}. Recently, Manhattan curves have been studied for pairs of cusped Fuchsian representations \cite{kao.1,kao.2}, pairs of cusped quasi-Fuchsian representations \cite{BCK}, comparing quasi-Fuchsian representations with negatively curved metrics on surfaces \cite{kao.3}, 
pairs of cusped Hitchin representations \cite{BCKM}, and
pairs of geometric actions on hyperbolic groups \cite{cantrell.tanaka.2,cantrell.tanaka.1}.

Our first main theorem fits into the aforementioned results, and to the authors' knowledge, is the first to address the analyticity of Manhattan curves outside the scope of relatively hyperbolic groups or representation theory.

\begin{theorem}\label{thm.manhattanfactsG}
Let $\G$ be a group in the class $\frakG$ and let it act properly and cocompactly on the cuboid complexes $\calX^\frakw=(\calX,\frakw)$ and $\calX_\ast^{\frakw_\ast}=(\calX_\ast,\frakw_\ast)$. Then the Manhattan curve $\thet_{\calX^{\frakw_\ast}_\ast/\calX^\frakw}:\R \ra \R$
is convex, decreasing, and analytic. In addition, the following limit exists and equals $-\thet'_{\calX^{\frakw_\ast}_\ast/\calX^\frakw}(0)$:
\[\tau(\calX^{\frakw_\ast}_\ast/\calX^\frakw):= \lim_{T\to\infty} \frac{1}{\#\{[g]\in \conj \colon \ell^\frakw_\calX[g]< T\}} \sum_{\ell^\frakw_\calX[g]< T} \frac{\ell^{\frakw_\ast}_{\calX_\ast}[g]}{T}.
    \]
Moreover, we always have
\[\tau(\calX^{\frakw_\ast}_\ast/\calX^\frakw)\geq v_{\calX^\frakw}/v_{\calX^{\frakw_\ast}_\ast},\]
for $v_{\calX^\frakw},v_{\calX^{\frakw_\ast}_\ast}$ the corresponding exponential growth rates, and the following are equivalent:
\begin{enumerate}
    \item $\theta_{\calX^{\frakw_\ast}_\ast/\calX^\frakw}$ is a straight line;
    \item there exists $\Lam >0$ such that $\ell^\frakw_\calX[g] = \Lam  \ell^{\frakw_\ast}_{\calX_\ast}[g]$ for all $[g] \in \conj(\G)$; and,
    \item $\tau(\calX^{\frakw_\ast}_\ast/\calX^\frakw) = v_{\calX^\frakw}/v_{\calX^{\frakw_\ast}_\ast}$.
\end{enumerate}
\end{theorem}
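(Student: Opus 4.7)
The plan is to translate the statement into thermodynamic formalism, where each assertion becomes a consequence of properties of topological pressure on a subshift of finite type. The decisive step, and the main obstacle, is to construct the finite-state automaton promised by the introduction: a Calegari--Fujiwara-type coding for $\G$ that simultaneously captures both cubulations. Concretely, one builds a topologically mixing one-sided subshift of finite type $(\Sigma_\calA,\sigma)$ together with two strictly positive H\"older-continuous roof functions $r_\frakw, r_{\frakw_\ast}\colon \Sigma_\calA\to\R_{>0}$ such that, for a cofinite family of conjugacy classes, every $[g]\in\conj(\G)$ corresponds to a periodic orbit of period $n=n([g])$ with
\[
\ell^\frakw_\calX[g]=S_n r_\frakw(x),\qquad \ell^{\frakw_\ast}_{\calX_\ast}[g]=S_n r_{\frakw_\ast}(x).
\]
The contracting element is what forces the relevant component of the automaton to be mixing and to carry the full exponential growth rate, while virtual co-speciality is what lets the same automaton encode both length spectra with H\"older regularity. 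This simultaneous coding outside the hyperbolic setting is the technical heart of the paper advertised in the abstract and the place where I expect most of the work to go.

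With this coding in hand, the remainder of the proof is a combination of standard Ruelle--Bowen thermodynamic formalism on $(\Sigma_\calA,\sigma)$. The Manhattan curve is characterized implicitly by the pressure equation
\[
P\bigl(-\theta(s)\,r_\frakw-s\,r_{\frakw_\ast}\bigr)=0,
\]
via the usual relation between the abscissa of convergence of weighted conjugacy-class series and the zero set of pressure. Real analyticity of $\theta$ follows from Ruelle's theorem and the implicit function theorem; convexity and strict decrease of $\theta$ follow from the convexity and strict monotonicity of pressure in the H\"older potential together with $r_\frakw,r_{\frakw_\ast}>0$. The existence of $\tau(\calX^{\frakw_\ast}_\ast/\calX^\frakw)$ and the identity $\tau=-\theta'(0)$ come from Parry--Pollicott equidistribution of periodic orbits with respect to the equilibrium state $\mu_0$ of $-v_{\calX^\frakw} r_\frakw$: the averaged ratio in the definition of $\tau$ converges to $\int r_{\frakw_\ast}\,d\mu_0\big/\int r_\frakw\,d\mu_0$, and implicit differentiation of the pressure equation at $s=0$ identifies this quotient with $-\theta'(0)$.

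Finally, using $\theta(0)=v_{\calX^\frakw}$ and $\theta(v_{\calX^{\frakw_\ast}_\ast})=0$, the convex chord bound gives $\theta'(0)\leq -v_{\calX^\frakw}/v_{\calX^{\frakw_\ast}_\ast}$, i.e.\ the claimed lower bound on $\tau$. For the equivalence, $(2)\Ra(1)$ is a direct calculation in the defining series, and $(1)\Leftrightarrow(3)$ follows because equality in the chord bound, combined with convexity and analyticity, forces $\theta$ to be affine on all of $\R$. The rigid implication $(1)\Ra(2)$ is the analytic core: if $\theta$ is linear, then the map $s\mapsto P(-\theta(s)r_\frakw-s\,r_{\frakw_\ast})$ is identically zero, so its first two derivatives at $s=0$ vanish, yielding that the linear combination $\Lam^{-1} r_\frakw - r_{\frakw_\ast}$ has zero mean and zero variance against $\mu_0$; by Livsic's theorem this combination is a coboundary, so Birkhoff sums along periodic orbits satisfy $\Lam^{-1}S_n r_\frakw=S_n r_{\frakw_\ast}$. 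Reading this back through the coding gives $\ell^\frakw_\calX[g]=\Lam\,\ell^{\frakw_\ast}_{\calX_\ast}[g]$ on the cofinite family, and the identity extends to all of $\conj(\G)$ by passing to large powers, since powers of exceptional classes eventually lie in the coded family and both translation length functions are homogeneous under taking powers.
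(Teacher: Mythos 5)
Your overall strategy matches the paper's: build a finite-state automaton that simultaneously encodes the two cubulations (their Proposition~\ref{prop.languageL_X}), characterize the Manhattan curve as a pressure function for an associated potential (their Proposition~\ref{prop.manattanpressure}), and harvest analyticity, convexity, monotonicity, the value of $\tau$, and the rigidity equivalence from standard thermodynamic formalism. There are, however, a few places where your route differs from or is more optimistic than what the paper establishes.

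First, you posit an \emph{exact} coding of a ``cofinite family'' of conjugacy classes, with both translation lengths realized as Birkhoff sums of strictly positive roof functions. The paper does not prove this. The automaton $\calA_\calX$ encodes conjugacy classes only of a finite-index subgroup $\ov\G$ exactly (via the map $\beta$, Lemma~\ref{lem.psiell_ast}), and for a general $[g]\in\conj(\G)$ the comparison with some $\beta(\om_{[g]})$ is only up to a uniformly bounded \emph{additive} error (Lemma~\ref{lem.goodpath}). This bounded-error comparison is exactly where the contracting element enters, via Yang's genericity of contracting segments; your proposal instead attributes the contracting element to forcing mixing and full growth rate, which is not quite the role it plays. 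Moreover, the second weight function $\psi^{\frakw_\ast}_{\calX_\ast}$ on edges is not strictly positive in general (edges of $\calX$ whose hyperplanes collapse under $\phi_\ast$ contribute nothing), so it cannot be used directly as a roof function; the paper uses it only as a potential, with positivity holding along closed orbits. None of this is fatal to your plan — bounded additive errors do not change abscissas of convergence, and positivity of Birkhoff sums along periodic orbits suffices for $\theta'<0$ — but the exact-coding claim is stronger than can be established and should be replaced by the bounded-distortion version.

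Second, your endgame arguments are genuinely different from the paper's, though both are standard. For the existence of $\tau$ you invoke periodic-orbit equidistribution; the paper deduces it from the G\"artner--Ellis large deviation theorem (Corollary~\ref{coro.ldmanhattan}), which is arguably cleaner given the bounded-error correspondence and the non-lattice issues with general orthotope structures. For $(1)\Rightarrow(2)$ you differentiate the implicit pressure equation twice, deduce vanishing asymptotic variance, and invoke Livsic; the paper instead proves $(3)\Rightarrow(2)$ via the case of equality in the variational characterization of $v_{\calX^\frakw}/v_{\calX_\ast^{\frakw_\ast}}$, which identifies the equilibrium states of $-\delta_r r$ and $-\delta_\psi\psi$. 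Both arguments deliver that $r^\frakw_\calX$ and $\psi^{\frakw_\ast}_{\calX_\ast}$ are cohomologous up to scaling, and both then need the last step you correctly identify — passing to powers to upgrade a bounded difference $|\ell^\frakw_\calX[g]-\Lam\ell^{\frakw_\ast}_{\calX_\ast}[g]|\le C$ on $\conj(\ov\G)$ to an exact identity on all of $\conj(\G)$.

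In short: the architecture of your proof is correct and coincides with the paper's. You should replace the exact-coding claim with the bounded-additive-error comparison provided by the automaton plus the contracting element, drop the assertion that $r_{\frakw_\ast}$ is strictly positive, and then your choice of equidistribution for $\tau$ and vanishing-variance/Livsic for $(1)\Rightarrow(2)$ are valid alternatives to the paper's large-deviation and variational-principle arguments.
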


\begin{remark}
    In the result above, the group $\G$ does not have to belong to $\frakG$ as long as the triple $(\G,\calX,\calX_\ast)$ belongs to the class $\frakX$ in Definition \ref{def.classX}. In particular, the action on $\calX_\ast$ does not have to be proper. See Theorem \ref{thm.manhattanfactsX} for the more general statement.
\end{remark}

The main tool in the proof of Theorems \ref{thm.manhattanfactsG} and \ref{thm.manhattanfactsX} is the construction of a finite-state automaton that simultaneously encodes translation lengths for the actions on both $\calX$ and $\calX_\ast$. This is done in Proposition \ref{prop.languageL_X}, where for a convenient finite index subgroup $\ov\G<\G$ such that the quotient $\ov\calX=\ov\G \bs \calX$ is a special cube complex, we construct a finite state automaton $\calA_\calX$ with corresponding directed graph $\calG_\calX$ satisfying the following:
\begin{itemize}
    \item The labeling map assigns to any edge of $\calG_\calX$  an oriented hyperplane of $\ov\calX$. 
    \item There is a base vertex $o \in \calX$ such that each admissible path in $\calG_\calX$ corresponds to a geodesic path in $\calX$ starting at $o$. The hyperplanes dual to the edges of this geodesic project in $\ov\calX$ to the labels of the corresponding path in $\calG_\calX$. 
    \item The assignment from the set of admissible paths in $\calG_\calX$ into $\calX^0$ that sends each path to the endpoint of its geodesic constructed as above is surjective and uniformly finite-to-one. 
    \item Loops in $\calG_\calX$ correspond to geodesics in $\calX$ with endpoints related by an element of $\ov\G$, so they induce conjugacy classes in $\ov \G$. The length of any such geodesic is the $\ell_\calX$-translation length of the conjugacy class.
    \item There exist functionals $r_\calX^{\frakw}$ and $\psi^{\frakw_\ast}_{\calX_\ast}$ on the edges of $\calG_\calX$ such that their sums along the edges of a loop in $\calG_\calX$ equal the $\ell^{\frakw}_{\calX}$-translation length and $\ell^{\frakw_\ast}_{\calX_\ast}$-translation length of the induced conjugacy class, respectively. 
\end{itemize}

We use the automaton $\calA_\calX$ and the functional $r_\calX^\frakw$ to define a suspension flow over a subshift of finite type, whose periodic orbits induce conjugacy classes in $\conj(\ov\G)$ with periods corresponding to $\ell^\frakw_\calX$-translation lengths. The functional $\psi^{\frakw_\ast}_{\calX_\ast}$ induces a potential $\Phi$ on this suspension, whose Birkhoff sums correspond to $\ell^{\frakw_\ast}_{\calX_\ast}$-translation lengths. The existence of a contracting element allows us to describe the Manhattan curve $\theta_{\calX^{\frakw_\ast}_\ast/\calX^\frakw}$ in terms of pressure functions associated to $\Phi$ (see Proposition \ref{prop.manattanpressure}), and the theorems then follow by standard results in thermodynamic formalism.

\subsection{Large deviations}
For a pair $\calX,\calX_\ast$ of cubulations of a group $\G\in \frakG$, we also study large deviations for their translation lengths. That is, we estimate the number of conjugacy classes $[g]$ for which
\begin{equation}\label{eq.quotient}
\left| \frac{\ell_{\calX_\ast}[g]}{\ell_\calX[g]} - \eta \right| < \epsilon \ \text{ for some $\eta \in \R$ and a small $\epsilon >0$}.
\end{equation}
A more difficult question is to study the set of conjugacy classes $[g]$ such that
\begin{equation} \label{eq.difference}
|\ell_{\calX_\ast}[g] - \eta \ell_\calX[g]| < \epsilon \ \text{ for some $\eta \in \R$ and a small $\epsilon >0$}.    
\end{equation}
Indeed, this is more delicate than the corresponding quotient question above as (when $\ell_{\calX}[g]$ is bounded away from $0$) \eqref{eq.difference} implies \eqref{eq.quotient} but not vice versa.

Despite this latter question being significantly harder, there are previous works that tackle this problem in other settings. For example, let $\Sigma$ be a closed surface with negative Euler characteristic and fundamental group $\G$, and suppose that $\mathfrak{g}$ and $\mathfrak{g}_\ast$ are two hyperbolic metrics on $\Sigma$. These metrics induce isometric actions of $\G$ on $\widetilde\Sigma$ with translation length functions $\ell_{\mathfrak{g}}$ and $\ell_{\mathfrak{g}_\ast}$. A result of Schwartz and Sharp \cite{schwartz.sharp} states that there is an interval $(\alpha, \beta) \subset \R$ and constants $C, \lam >0$ such that any $\eta \in (\alpha, \beta)$ satisfies
\[
\#\left\{ [g] \in \conj(\G): \ell_{\mathfrak{g}}[g] < T : |\ell_{\mathfrak{g}_\ast}[g] - \eta \ell_{\mathfrak{g}}[g] | <\epsilon \right\} \sim \frac{Ce^{\lam T}}{T^{3/2}}
\]
as $T\to\infty$ for any fixed $\epsilon >0$ (here `$\sim$' represents that the quotients of the two quantities converge to $1$ as $T\to\infty$). Similar results are known to hold for surfaces of variable negative curvature by Dal'bo \cite{dalbo}, for Hitchin representations by Dai and Martone \cite{dai.martone}, for Green metrics by Cantrell \cite{cantrell.new} and for some pairs of points in outer space by Sharp \cite{sharp.1}. These asymptotics are often referred to as correlation results and to prove them thermodynamic formalism is usually employed. 
To apply thermodynamic formalism one needs to know that the length spectra of the two considered metrics are not \emph{rationally related}. That is, if $\ell_1$, $\ell_2$ are the length spectra that we want to compare then we would need to know that there do not exist non-zero $a,b \in \R$ with $a\ell_1[g] + b\ell_2[g] \in \Z$ for all $[g] \in \conj(\G).$
This property is vital as it implies bounds on the operator norm of families of transfer operators which are then used in the proof of the correlation asymptotic.

On the other hand, the length spectra of a pair of cubical actions on $\CAT(0)$ cube complexes are always rationally related. Indeed, after possibly performing one cubical barycentric subdivision, every cubical isometry of a $\CAT(0)$ cube complex either fixes a vertex or preserves a bi-infinite geodesic on which the isometry acts by non-trivial translations \cite{haglund}. In particular, the translation length function associated to any of these actions has image belonging to $\frac{1}{2}\Z$. 

However, by means of the automaton $\calA_\calX$, we are still able to estimate the number of conjugacy classes satisfying \eqref{eq.difference}. As a consequence of Theorem \ref{thm.ldtsi} in the setting of subshifts of finite type, we can prove the following theorem. 

\begin{theorem}\label{thm.cubulation.Gversion}
Let $\G$ be a group in the class $\frakG$ and let it act properly and cocompactly on the $\CAT(0)$ cube complexes $\calX$ and $\calX_\ast$. Let $\conj'\subset \conj$ be the set of non-torsion conjugacy classes and consider the dilations 
\[
\Dil(\calX_\ast,\calX) = \sup_{[g] \in \conj'} \frac{\ell_{\calX_\ast}[g]}{\ell_{\calX}[g]} \ \text{ and } \
\Dil(\calX,\calX_\ast)^{-1} = \inf_{[g] \in \conj'} \frac{\ell_{\calX_\ast}[g]}{\ell_{\calX}[g].}
\] 
Then there exists an analytic function $$\calI:[\Dil(\calX,\calX_\ast)^{-1}, \Dil(\calX_\ast,\calX)]\ra \R$$ and $C>0$ such that for any $\eta \in (\Dil(\calX,\calX_\ast)^{-1}, \Dil(\calX_\ast,\calX))$ we have
    \begin{equation}\label{eq.LDcubu}
   0 <  \limsup_{T\to\infty} \frac{1}{T} \log \left(\#\left\{ [g] \in \conj: \ell_\calX[g] < T, | \ell_{\calX_\ast}[g]  - \eta \ell_{\calX}[g] | < \frac{C}{T} \right\} \right)= \calI(\eta) \le v_\calX.
    \end{equation} 
    Furthermore, we have equality in the above inequality if and only if $\eta=\tau(\calX_\ast/\calX)$.
\end{theorem}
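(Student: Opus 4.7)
\emph{Plan.} The strategy is to reduce the estimate \eqref{eq.LDcubu} to a statement about suspension flows over subshifts of finite type via the automaton $\calA_\calX$ from Proposition \ref{prop.languageL_X}, and then invoke the symbolic large deviation result Theorem \ref{thm.ldtsi}.

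First I would set up the symbolic model. Replacing $\G$ by a finite-index subgroup $\ov\G$ with $\ov\G\bs\calX$ special, I consider the one-sided subshift of finite type $(\Sigma,\sigma)$ encoded by admissible paths in the graph $\calG_\calX$, and equip it with the suspension flow whose roof function is the (H\"older extension of the) edge functional $r_\calX$ and with potential $\Phi$ induced by $\psi_{\calX_\ast}$. By Proposition \ref{prop.languageL_X}, periodic orbits of this flow are in uniformly finite-to-one correspondence with conjugacy classes in $\ov\G$ induced by loops in $\calG_\calX$; under this correspondence, the period equals $\ell_\calX[g]$ and the Birkhoff sum of $\Phi$ equals $\ell_{\calX_\ast}[g]$. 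The contracting element hypothesis in Definition \ref{def.frakG} ensures that enough conjugacy classes are captured to preserve exponential growth rates and the Manhattan curve, as exploited in Proposition \ref{prop.manattanpressure}.

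Next I would apply Theorem \ref{thm.ldtsi} to this suspension flow. This produces an analytic rate function $\calI$, obtained as a Legendre transform of the pressure $s\mapsto P(s\Phi-tr_\calX)$, which is strictly concave in the interior of its natural domain. The interval of admissible means of $\Phi/r_\calX$ over periodic orbits is exactly $[\Dil(\calX,\calX_\ast)^{-1},\Dil(\calX_\ast,\calX)]$, since the ratios $\ell_{\calX_\ast}[g]/\ell_\calX[g]$ are dense in the set of ergodic averages. The upper bound $\calI(\eta)\le v_\calX$ follows because the count in \eqref{eq.LDcubu} is a sub-count of all periodic orbits of period less than $T$, and $\calI$ attains its unique maximum at the mean value $\eta=-\theta'_{\calX^{\frakw_\ast}_\ast/\calX^\frakw}(0)=\tau(\calX_\ast/\calX)$ identified in Theorem \ref{thm.manhattanfactsG}; this gives the equality case. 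To return to $\G$, note that passing from $\ov\G$ to $\G$ and accounting for the uniform multiplicity of the coding introduce only a bounded multiplicative factor that disappears in the exponential rate.

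The main obstacle is the lattice phenomenon pointed out before the statement: all translation lengths lie in $\tfrac{1}{2}\Z$, so for irrational $\eta$ no classical large deviation asymptotic with a fixed window $\epsilon>0$ can hold, since $\{[g]:|\ell_{\calX_\ast}[g]-\eta\ell_\calX[g]|<\epsilon\}$ depends erratically on $\epsilon$. The shrinking window $C/T$ is chosen precisely so that it becomes comparable to the lattice spacing along orbits of period $T$, thereby bypassing this obstruction. The technical heart of the argument, which lies in Theorem \ref{thm.ldtsi} rather than in the transfer step, is showing that $C$ can be taken uniform in $\eta$ on compact subintervals of the admissible range and that the symbolic count at this scale still realizes the sharp exponential rate $\calI(\eta)$; the present theorem then follows by the coding described above.
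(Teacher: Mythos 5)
Your proposal follows essentially the same strategy as the paper: pass to the finite-index co-special subgroup, code via the automaton $\calA_\calX$ of Proposition \ref{prop.languageL_X}, invoke the shrinking-interval large deviation result Theorem \ref{thm.ldtsi}, and transfer from periodic orbits back to conjugacy classes using the contracting-element hypothesis. Two small imprecisions are worth noting, though neither affects the exponential rate. First, the paper applies Theorem \ref{thm.ldtsi} directly to the base subshift $(\Sigma_\Cc,\sigma)$ with the integer-valued potential $\psi=\psi_{\calX_\ast}$, not to a suspension flow: since the roof $r_\calX$ is identically $1$ in the standard combinatorial metric, the period of an orbit is just the cycle length $n=\ell_\calX[g]$, and the suspension enters only in Proposition \ref{prop.manattanpressure} to identify the Manhattan curve as a pressure function, not in the shrinking-interval count. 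Second, the multiplicity of the coding $\om\mapsto\beta(\om)$ is controlled by Lemma \ref{lem.polyn} to be at most polynomial in $T$, not bounded as you assert; this still washes out in the $\tfrac{1}{T}\log$ scaling, but "bounded multiplicative factor" is literally false, and the lower-bound comparison in the paper keeps explicit track of the polynomial $Q$. Finally, for the upper bound of the limsup the paper uses the G\"artner--Ellis large-deviation principle from Corollary \ref{coro.ldmanhattan}, whereas your write-up only justifies the cruder bound $\calI(\eta)\le v_\calX$; you would need to also cite the LD upper bound to get the full equality in \eqref{eq.LDcubu}.
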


\begin{remark}
As in Theorem \ref{thm.manhattanfactsG}, the conclusion above still holds for triples $(\G,\calX,\calX_\ast)$ in the class $\frakX$, see Theorem \ref{thm.cubulation}. However, for our arguments (particularly Theorem \ref{thm.ldtsi}) it is crucial that the translation length functions belong to a lattice in $\R$. We still expect Theorem \ref{thm.cubulation} to hold for arbitrary cuboid complexes $\calX^\frakw$ and $\calX^{\frakw_\ast}_\ast$, but we will not pursue this in this work.    
\end{remark}

As an application of Theorem \ref{thm.cubulation.Gversion} we deduce large deviations with shrinking for the intersection of curves on hyperbolic surfaces. Let $\Sig$ be a closed orientable surface of negative Euler characteristic and fundamental group $\G$. If $\al,\beta$ are immersed closed oriented curves in $\Sig$, then the \emph{(geometric) intersection number} is the minimal number $i_\Sig(\al,\beta)$ of intersections of closed curves in the free homotopy classes of $\al$ and $\beta$. The function $i_\Sig$ can be extended by bilinearity to \emph{weighted multicurves}, which are finite sums of the form $\sum_j{\lam_j\al_j}$ with $\al_j$ immersed oriented closed curves in $\Sig$ and a set $(\lam_j\geq 0)_i$  of \emph{weights}. Any non-trivial element in $\conj(\G)$ is represented by a unique free homotopy class of immersed oriented closed curves, so we can talk of the intersection number between a weighted multicurve in $\Sigma$ and a conjugacy class in $\conj(\G)$. For more details about the intersection number, see \cite{farb-margalit}. 

A generating set $S$ for $\G$ is \emph{simple} if there exists a point $p\in \Sig$ such that elements of $S\subset \G=\pi_1(\Sig,p)$ can be represented by simple loops that
are pairwise non-homotopic and disjoint except at the base point $p$. For example, the generating set for the standard presentation
$$\G=\left<a_1,b_1,\dots,a_g,b_g\colon[a_1,b_1]\cdots [a_g,b_g]\right>$$
is simple. In \cite[Thm.~1.2]{erlandsson}, Erlandsson proved that the translation length function $\ell_S$ of the word metric of a simple generating set $S$ can be recovered by pairing in the intersection number against a carefully chosen weighted multicurve $\al_S$ with weights in $\frac{1}{2}\bbZ$. Up to scaling, this translation length function can also be recovered by looking at the $\CAT(0)$ cube complex dual to the multicurve $\al_S$. Therefore, Theorem \ref{thm.cubulation} applies and we obtain the following. 

\begin{corollary}\label{coro.intersectionsurfaces}
    Let $\G$ be the fundamental group of the closed orientable hyperbolic surface $\Sigma$ and consider a simple generating set $S$ of exponential growth rate $v_S$. Let $\al$ be a non-trivial weighted multicurve on $\Sigma$ with integer weights, and define
    \[a_{\inf}:=\inf_{[g] \in \conj'} \frac{i_\Sig(\al,[g])}{\ell_{S}[g]} \ \text{ and } \
a_{\sup} := \sup_{[g] \in \conj'} \frac{i_\Sig(\al,[g])}{\ell_{S}[g]}.
\] Then there exists an analytic convex function $\calI:[a_{\inf},a_{\sup}] \ra \R$ and $C>0$ such that for any $\eta \in (a_{\inf},a_{\sup})$ we have
        \[
   0 <  \limsup_{T\to\infty} \frac{1}{T} \log \left(\#\left\{ [g] \in \conj: \ell_S[g] < T, | i_\Sigma(\al,[g])  - \eta \ell_{S}[g] | < \frac{C}{T} \right\} \right)= \calI(\eta) \le v_S.
    \]
\end{corollary}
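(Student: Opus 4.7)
The plan is to reduce Corollary~\ref{coro.intersectionsurfaces} to the cubical large deviation estimate---namely, the general form of Theorem~\ref{thm.cubulation.Gversion} alluded to in the subsequent remark that applies to any triple $(\G,\calX,\calX_\ast)$ in the class $\frakX$ of Definition~\ref{def.classX}. The key is to realize both $\ell_S$ and $i_\Sig(\al,\cdot)$ simultaneously as translation length functions of cubical actions of $\G$ on $\CAT(0)$ cube complexes.

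First I would invoke Erlandsson's theorem \cite[Thm.~1.2]{erlandsson} to produce a weighted multicurve $\al_S$ with weights in $\tfrac{1}{2}\Z$ such that $\ell_S[g] = i_\Sig(\al_S,[g])$ for every $[g]\in \conj(\G)$. Since $S$ generates $\G$, the function $\ell_S$ is positive on every non-trivial conjugacy class, forcing $\al_S$ to fill $\Sig$. After doubling weights to clear denominators and replicating each component of the resulting integer multicurve according to its weight, I obtain two honest unweighted multicurves. Sageev's dual cube complex construction applied to these then produces a $\CAT(0)$ cube complex $\calX$ on which $\G$ acts properly and cocompactly with $\ell_\calX[g] = 2\ell_S[g]$, together with a second $\CAT(0)$ cube complex $\calX_\ast$ on which $\G$ acts by cubical automorphisms with $\ell_{\calX_\ast}[g] = 2\,i_\Sig(\al,[g])$. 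Because $\G$ is a non-elementary cubulable hyperbolic group it belongs to $\frakG$, so the action on $\calX$ is automatically virtually co-special and admits a contracting element.

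The principal technical point, and the main obstacle I anticipate, is verifying that the triple $(\G,\calX,\calX_\ast)$ lies in $\frakX$ despite the fact that $\al$ need not fill $\Sig$, so that the action on $\calX_\ast$ can fail to be proper or cocompact. The favorable input is that hyperplane stabilizers of $\calX_\ast$ are conjugate to the cyclic subgroups carried by components of $\al$; these are quasiconvex in the hyperbolic group $\G$, and hence convex-cocompact in every cubulation of $\G$ by Agol's theorem combined with \cite[Prop.~7.2]{haglund-wise.special}. This supplies the compatibility of convex-cocompact subgroups required by Definition~\ref{def.classX}. Once $(\G,\calX,\calX_\ast)\in \frakX$ is established, Theorem~\ref{thm.cubulation} applies directly, producing an analytic rate function on $[\Dil(\calX,\calX_\ast)^{-1},\Dil(\calX_\ast,\calX)]$ together with the shrinking-window estimate \eqref{eq.LDcubu}; the integer weight hypothesis is what guarantees that both translation length functions take values in a common lattice of $\R$, meeting the crucial hypothesis of Theorem~\ref{thm.ldtsi}.

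Finally, the identifications $\ell_\calX = 2\ell_S$ and $\ell_{\calX_\ast} = 2\,i_\Sig(\al,\cdot)$ transport the cubical dilations into the surface dilations, giving $\Dil(\calX_\ast,\calX) = a_{\sup}$ and $\Dil(\calX,\calX_\ast)^{-1} = a_{\inf}$; the shrinking window of size $C/T$ in the cube complex formulation becomes a window of the same order in the surface formulation after absorbing the factors of two into the constant, while the rate function and the exponential growth rate rescale in a compatible way, keeping the upper bound $\calI(\eta)\leq v_S$. Convexity of $\calI$ (which is not explicit in Theorem~\ref{thm.cubulation.Gversion}) follows from its description as the Legendre transform of the convex analytic Manhattan curve supplied by Theorem~\ref{thm.manhattanfactsG}, completing the argument.
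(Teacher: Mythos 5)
Your proof follows the same route the paper sketches (the paper devotes only one sentence to this corollary before Theorem~\ref{thm.cubulation} is invoked): use Erlandsson's theorem to realize $\ell_S$ as intersection against a filling half-integer multicurve $\al_S$, pass to dual cube complexes, verify the resulting triple lies in $\frakX$, and apply the $\frakX$-version of the large deviation theorem (Theorem~\ref{thm.cubulation}). You correctly identify the key subtlety that the paper leaves implicit: since $\al$ need not fill $\Sig$, the action on the dual complex $\calX_\ast$ can fail to be proper, which is exactly why one needs Theorem~\ref{thm.cubulation} for triples in $\frakX$ rather than the $\frakG$-version (Theorem~\ref{thm.cubulation.Gversion}), and your verification that hyperplane stabilizers of $\calX_\ast$ are cyclic, hence quasiconvex in the hyperbolic surface group, hence convex-cocompact in every cubulation, is the right argument for Definition~\ref{def.classX}(2). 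The bookkeeping of factors of two after clearing the $\tfrac12\Z$ weights is also correct.

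One small cavil about the last sentence of your argument: $\calI$ is not the Legendre transform of the Manhattan curve. From the proof of Theorem~\ref{thm.cubulation}, $\calI = v_\calX - \calL$ where $\calL$ is that Legendre transform, so $\calI$ is actually \emph{concave}, not convex (this is consistent with Theorem~\ref{thm.wm}, which explicitly states concavity; the word ``convex'' in the corollary statement is almost certainly a typo in the paper). Your reasoning would give concavity, not convexity, once you account for the sign.
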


\subsection{Word metrics on hyperbolic groups}
Since the proof of Theorem \ref{thm.cubulation.Gversion} relies on Theorem \ref{thm.ldtsi} (a purely dynamical statement), the existence of the automaton $\calA_\calX$ encoding both actions on $\calX$ and $\calX_\ast$, and the arithmeticity of the translation length functions, we can deduce large deviation with shrinking for any pair of group actions fulfilling similar conditions. That is the case of word metrics on hyperbolic groups, and in fact, this was the author's main motivation at the beginning of this project.

Let $\G$ be a non-elementary hyperbolic group and let $S,S_\ast\subset \G$ be finite generating sets with corresponding word metrics $d_S, d_{S_\ast}$. By Cannon's theorem \cite{cannon}, for a total order on $S$, the language of lexicographically first geodesics in $\G$ is regular, so it is parametrized by a finite-state automaton. As a consequence of \cite[Lem.~3.8]{calegari-fujiwara}, Calegari and Fujiwara are able to modify this automaton (without modifying the parameterized language), and find an integer functional on the edges of the graph of the automaton so that its sum over paths recovers the $S_\ast$-word length for the corresponding element in $\G$ (this was our main motivation to construct the automaton $\calA_\calX$ in Proposition \ref{prop.languageL_X}).

By studying the subshift of finite type associated to this automaton, Cantrell and Tanaka \cite{cantrell.tanaka.1,cantrell.tanaka.2} deduced analyticity of the Manhattan curve for $S,S_\ast$ as well as a large deviation principle. 
More precisely, let $\ell_S, \ell_{S_\ast}$ be the corresponding translation length functions and 
consider the dilations
\[
\Dil(S_\ast,S) = \sup_{[g] \in \conj'} \frac{\ell_{S_\ast}[g]}{\ell_{S}[g]} \ \text{ and } \
\Dil(S,S_\ast)^{-1} = \inf_{[g] \in \conj'} \frac{\ell_{S_\ast}[g]}{\ell_{S}[g].}
\] 
Then there exists a real analytic, concave function $\calI :[ \Dil(S,S_\ast)^{-1}, \Dil(S_\ast,S)] \to \R_{>0}$ such that for $\eta \in (\Dil(S,S_\ast)^{-1}, \Dil(S_\ast,S))$ we have
\[
\lim_{\epsilon \to 0^+} \limsup_{n\to\infty} \frac{1}{T} \log \left( \#\left\{[g] \in \conj(\G): \ell_S[g] <  T : \left| \frac{\ell_{S_\ast}[g]}{\ell_S[g]} - \eta  \right| < \epsilon \right\} \right)= \calI(\eta).
\]
By applying Theorem \ref{thm.ldtsi} to this subshift, we can improve this result and obtain a large deviation theorem with shrinking.

\begin{theorem}\label{thm.wm}
    Let $\G$ be a non-elementary hyperbolic group and consider two finite generating sets $S, S_\ast$ for $\G$ with exponential growth rates $v_S,v_{S_\ast}$. Then there exists $C>0$ such that for any $\eta \in (\Dil(S,S_\ast)^{-1}, \Dil(S_\ast,S))$ we have
    \[
   0 <  \limsup_{T\to\infty} \frac{1}{T} \log \left(\#\left\{ [g] \in \conj: \ell_S[g] < T, | \ell_{S_\ast}[g]  - \eta \ell_{S}[g] | < \frac{C}{T} \right\} \right)= \calI(\eta) \le v_S.
    \]
    Furthermore, we have equality in the above inequality if and only if
    \[
    \eta = \tau(S_\ast/S):=\lim_{T\to\infty} \frac{1}{\#\{[g]\in \conj \colon \ell_S[g]<T\}} \sum_{\ell_S[g] < T} \frac{\ell_{S_\ast}[g]}{T}.
    \]
\end{theorem}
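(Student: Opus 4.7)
The plan is to mirror the cubulation case: encode both word metrics in a finite-state automaton and then invoke Theorem \ref{thm.ldtsi} in the dynamical setting. Fix a total order on $S$. By Cannon's theorem, the language of lexicographically first geodesic representatives in $(\G,d_S)$ is regular and is parametrized by a finite-state automaton $\calA$ with underlying directed graph $\calG$. Admissible paths evaluate to group elements with the number of edges equal to the $d_S$-length, and suitably based loops in $\calG$ induce conjugacy classes in $\conj(\G)$ whose $\ell_S$-translation length agrees with the loop length up to a uniformly bounded defect (coming from the standard gap between word length and translation length in a hyperbolic group). Applying the Calegari--Fujiwara refinement \cite[Lem.~3.8]{calegari-fujiwara} exactly as in Proposition \ref{prop.languageL_X}, I can assume $\calA$ carries an integer-valued functional $\psi$ on its edges whose sum along any admissible path equals the $d_{S_\ast}$-length of the corresponding group element. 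Summing $\psi$ along a loop then recovers the $\ell_{S_\ast}$-translation length of the induced conjugacy class, again up to a uniformly bounded defect.

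Next, I pass to the one-sided subshift of finite type $(\Sig,\sig)$ determined by $\calG$, restricted to a topologically mixing component of maximal entropy $v_S$, and extend $\psi$ to a locally constant potential. Periodic orbits of period $n$ biject with loops of length $n$ in $\calG$, and Birkhoff sums $S_n\psi$ at periodic points recover the $\psi$-weights of the associated loops. Because $\psi$ is integer-valued, it lies in the arithmetic lattice required by Theorem \ref{thm.ldtsi}; applying that theorem produces an analytic, strictly concave rate function $\calI$ on the interior of the range of asymptotic means of $\psi$ and a constant $C>0$ such that
\[
\limsup_{n\to\infty} \frac{1}{n} \log \#\{x \in \Fix(\sig^n) : |S_n\psi(x) - \eta n| < C\} = \calI(\eta) \le v_S,
\]
with equality precisely at the $\psi$-mean of the measure of maximal entropy.

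Finally, I translate back to conjugacy classes. The loop-to-conjugacy-class assignment is uniformly finite-to-one, and the defects between loop length and $\ell_S$-translation length (and between $\psi$-weight and $\ell_{S_\ast}$-translation length) are uniformly bounded. Summing the dynamical estimate over loop lengths $n \le T$ and absorbing the bounded defects into the constant converts the fixed-width window of the symbolic statement into the window $|\ell_{S_\ast}[g] - \eta\ell_S[g]| < C/T$ that appears in the theorem. The interval $(\Dil(S,S_\ast)^{-1},\Dil(S_\ast,S))$ coincides with the interior of the range of asymptotic means of $\psi$, because the extreme ratios $\ell_{S_\ast}/\ell_S$ are realized on closed loops. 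The Cantrell--Tanaka thermodynamic description of the Manhattan curve \cite{cantrell.tanaka.1,cantrell.tanaka.2} identifies the $\psi$-mean of the measure of maximal entropy with $\tau(S_\ast/S) = -\thet'_{S_\ast/S}(0)$, which yields the equality characterization, and strict positivity of $\calI$ throughout the open interval follows from its strict concavity and the positivity of its maximum value $v_S$.

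The main obstacle will be this last translation step: converting a symbolic large deviation bound with fixed-width window into a group-theoretic statement whose window shrinks like $C/T$, while simultaneously handling the bounded-to-one encoding of conjugacy classes by loops and the bounded defect between loop data and translation length data. The arithmeticity of $\psi$ (hence of Birkhoff sums along loops) is what makes the shrinking window admissible: values of $S_n\psi - \eta n$ lie on a shifted lattice whose spacing is comparable to the shrinking rate of $C/T$ after one sums over $n \le T$, so no exponential count is lost. This is precisely the technical content that Theorem \ref{thm.ldtsi} is engineered to produce, and the bookkeeping required here is entirely parallel to that of the cubulation argument deducing Theorem \ref{thm.cubulation.Gversion} from Theorem \ref{thm.ldtsi}.
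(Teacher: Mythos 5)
The proposal follows the same skeleton as the paper's proof: Cannon coding for $(\G,S)$, the Calegari--Fujiwara integer-valued edge functional $\psi$, passage to a maximal recurrent component, Theorem~\ref{thm.ldtsi}, the polynomial over-count from periodic orbits to conjugacy classes, and the Cantrell--Tanaka thermodynamic description of the interval endpoints and of $\tau(S_\ast/S)$. However, there is a genuine gap in the step that translates the symbolic estimate into the group-theoretic one. You claim that loop lengths and the $\psi$-weights of loops agree with $\ell_S$- and $\ell_{S_\ast}$-translation lengths only ``up to a uniformly bounded defect,'' and then propose to ``absorb the bounded defects into the constant.'' This cannot be made to work. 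If a genuine additive defect $D>0$ were present, converting $|\psi^n(x)-\eta n|<$ \emph{(something shrinking)} into a statement about $\ell_{S_\ast}[g]-\eta\ell_S[g]$ would force a window of width at least $D$, which is incompatible with the $C/T$ window in the conclusion; the arithmeticity of $\psi$ does not rescue this, since the issue is not lattice spacing but an $O(1)$ additive error that dominates $C/T$ for large $T$.

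The point you miss is that no defect exists: the identities are exact, and this exactness is what makes the shrinking window survive the translation. If $z$ is a periodic orbit of period $n$ through a recurrent state $z_0$ with loop product $g$, and $\rho$ is a prefix path from the initial state to $z_0$ with product $u$, then $\rho\cdot(\text{loop})^k$ is an admissible (hence geodesic) path, so $|u g^k|_S = |\rho|+kn$ and therefore $|g^k|_S = kn$ for all $k\ge 1$, giving $\ell_S[g]=n$ exactly. Likewise, by the Calegari--Fujiwara property applied to $\rho\cdot(\text{loop})^k$, one has $\psi(\rho) + k\,\psi^n(z) = |u g^k|_{S_\ast}$, and dividing by $k$ and letting $k\to\infty$ yields $\psi^n(z)=\ell_{S_\ast}[g]$ exactly. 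You also record the output of Theorem~\ref{thm.ldtsi} as a fixed-width window $|S_n\psi(x)-\eta n|<C$, but the theorem actually gives the shrinking window $|\psi^n(x)/n-\eta|<C/n^2$, i.e.\ $|\psi^n(x)-\eta n|<C/n$. Once you use the correct (shrinking) symbolic window together with the two exact identities, the polynomial over-count from orbits to conjugacy classes is exponentially negligible, and restricting to $\ell_S[g]=n$ near $T$ converts $C/n$ directly into $C/T$; the argument then closes as in the paper.
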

This result implies that, after scaling a pair of word metrics by their exponential growth rates, there is always an exponentially growing set for which their translation lengths are close, i.e. for any $\epsilon > 0$
\begin{equation}\label{example}
0 < \limsup_{T\to\infty} \frac{1}{T} \log \#\left\{ [g] \in \conj: \ell_S[g] < T, | v_S\ell_S[g]  - v_{S_\ast} \ell_{S_\ast}[g] | < \epsilon \right\}  \le v_S.     
\end{equation}
This extends the recent work \cite[Thm.~4.1]{cantrell.reyes.2} where the authors proved a correlation result for pairs of word metrics under an additional rationality assumption on the exponential growth rates. This result also answers a question raised by the authors in \cite[Rmk.~4.3]{cantrell.reyes.2}. We deduce the following corollary.
\begin{corollary} \label{coro.approx}
Let $\G$ be a non-elementary hyperbolic group, and let $S$ and $S_\ast$ be two finite generating sets on $\G$. Then there exists $C>0$ such that for any $\eta \in [\Dil(S,S_\ast)^{-1}, \Dil(S_\ast,S)]$ we can find an infinite sequence $(g_n)_{n\geq 1} \subset \G$ such that 
\[
\left|\frac{\ell_{S_\ast}[g_n]}{\ell_{S}[g_n]} - \eta \right| \le \frac{C}{|g_n|_S^{2}}.
\]
If $\eta \in [\Dil(S,S_\ast)^{-1}, \Dil(S_\ast,S)]$ is rational then there exists $g \in \G$ such that
\[
\frac{\ell_{S_\ast}[g]}{\ell_{S}[g]} = \eta.
\]
\end{corollary}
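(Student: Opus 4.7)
The plan is to derive Corollary~\ref{coro.approx} directly from Theorem~\ref{thm.wm} by converting its ``$C/T$ shrinking'' into a Diophantine bound through a counting argument that forces the selected conjugacy classes to have translation length comparable to $T$.

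For $\eta\in(\Dil(S,S_\ast)^{-1},\Dil(S_\ast,S))$, I would fix $C$ as in Theorem~\ref{thm.wm} and define
$$A_T:=\{[g]\in\conj:\ell_S[g]<T,\ |\ell_{S_\ast}[g]-\eta\ell_S[g]|<C/T\},$$
so that $\limsup_T(1/T)\log|A_T|=\calI(\eta)>0$. Picking $\delta\in(0,\calI(\eta)/3)$, there is a subsequence $T_n\to\infty$ with $|A_{T_n}|\geq e^{(\calI(\eta)-\delta)T_n}$, while $|A_T|\leq e^{(\calI(\eta)+\delta)T}$ for $T$ large. If every $[g]\in A_{T_n}$ had $\ell_S[g]<T_n/2$, then $A_{T_n}\subseteq A_{T_n/2}$ (because the shrinking tolerance only loosens when the denominator halves), and the resulting inequality $(\calI(\eta)-\delta)T_n\leq(\calI(\eta)+\delta)T_n/2$ contradicts the choice of $\delta$. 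Hence one can find $[g_n]\in A_{T_n}$ with $\ell_S[g_n]\in[T_n/2,T_n)$, and dividing $|\ell_{S_\ast}[g_n]-\eta\ell_S[g_n]|<C/T_n$ by $\ell_S[g_n]\leq T_n$ yields $|\ell_{S_\ast}[g_n]/\ell_S[g_n]-\eta|<C/\ell_S[g_n]^2$. Taking $g_n$ of minimal length in its conjugacy class and using the standard estimate $|g_n|_S\leq\ell_S[g_n]+O(1)$ for cyclically reduced representatives in a hyperbolic group, the bound is converted, at the cost of enlarging $C$, into the stated Diophantine inequality, and the sequence is infinite since $|g_n|_S\geq\ell_S[g_n]\geq T_n/2\to\infty$.

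For an endpoint $\eta\in\{\Dil(S,S_\ast)^{-1},\Dil(S_\ast,S)\}$ I would invoke the fact that the dilation is attained by some $g_0\in\G$---the ratio of Birkhoff sums over periodic orbits of the Calegari--Fujiwara subshift of finite type achieves its extrema on actual periodic orbits---so that $g_n:=g_0^n$ realizes $\ell_{S_\ast}[g_n]/\ell_S[g_n]=\eta$ exactly. For the rational part, the key additional input is that translation length functions of word metrics on hyperbolic groups take values in $\frac{1}{N}\Z$ for some integer $N=N(\G,S)$. Writing $\eta=p/q$, the separation estimate $|\ell_{S_\ast}[g_n]/\ell_S[g_n]-p/q|\geq 1/(qN\ell_S[g_n])$ (valid whenever the ratio differs from $p/q$) combined with the $C/|g_n|_S^2$ approximation from the first part forces $|g_n|_S$ to be uniformly bounded, contradicting $|g_n|_S\to\infty$; hence $\ell_{S_\ast}[g_n]/\ell_S[g_n]=\eta$ for all sufficiently large $n$, yielding the required $g$.

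The chief obstacle is the counting step producing $[g_n]$ with translation length comparable to $T_n$: it genuinely requires both halves of the limsup equality in Theorem~\ref{thm.wm}---the lower bound along a subsequence and the eventual upper bound---and cannot be obtained from the positivity of $\calI(\eta)$ alone. Once that step is secured, the remaining arguments are bookkeeping together with standard hyperbolic-group facts: the existence of cyclically reduced conjugacy representatives, the attainment of extremal dilations at loops of the automaton, and the uniform denominator of the translation length spectrum of a word metric.
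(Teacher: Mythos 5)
Your proof is correct, though it is more elaborate than the paper's (one-sentence) proof and takes a genuinely different route in the rational case. The paper treats the approximation statement as an immediate corollary of Theorem~\ref{thm.wm} and reduces the rational case to the assertion, left as an exercise, that a $\Z$-valued two-cylinder potential on a mixing subshift of finite type realizes every rational average in $(\alpha_{\min},\alpha_{\max})$ on some periodic orbit. You instead handle the rational case via a Diophantine-separation contrapositive: the bounded-denominator rationality of stable translation lengths in hyperbolic groups (Gromov/Delzant) gives $|\ell_{S_\ast}[g]/\ell_S[g]-p/q|\geq 1/(qN\ell_S[g])$ whenever the ratio differs from $p/q$, and combining this with the $C/\ell_S[g]^2$ approximation forces exact equality once $\ell_S[g]$ is large. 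This imports a nontrivial external theorem where the paper stays internal to the symbolic-dynamics formalism, but it is valid. Your endpoint analysis (extrema of the dilation achieved on periodic orbits, so powers of the extremal element realize the ratio exactly) supplies a detail the paper elides; note that since the extremal ratio is itself rational, this case also falls under the paper's exercise.

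One assertion in your write-up is mistaken, though it does not invalidate the proof. You claim the counting step ``genuinely requires both halves of the limsup equality \dots\ and cannot be obtained from the positivity of $\calI(\eta)$ alone.'' In fact every $[g]$ in your set $A_T$ already satisfies $|\ell_{S_\ast}[g]/\ell_S[g]-\eta|<C/(T\ell_S[g])\leq C/\ell_S[g]^2$ simply because $\ell_S[g]<T$; and since only finitely many conjugacy classes have $\ell_S[g]$ below a given bound, the exponential lower bound $\#A_{T_n}\geq e^{(\calI(\eta)-\delta)T_n}$ along a subsequence --- which needs only $\calI(\eta)>0$ --- already forces $A_{T_n}$ to contain classes with $\ell_S[g]$ arbitrarily large. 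Your $A_{T_n}\subseteq A_{T_n/2}$ dichotomy is a valid alternative that gives the slightly stronger conclusion $\ell_S[g_n]\geq T_n/2$, but the eventual upper bound half of the limsup equality is not actually required for the corollary.
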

In Section \ref{sec.example} we present an example for a pair of word metrics on a free group. In particular, we compute the limit supremum in \eqref{example} for this pair of word metrics.

\subsection*{Organization}
The organization of the paper is as follows. In Section \ref{sec.prelim} we cover preliminary material about Manhattan curves, $\CAT(0)$ cube complexes and cubulable groups, symbolic dynamics and suspension flows, and finite-state automata. 

In Section \ref{sec.ld} we prove Theorem \ref{thm.ldtsi}, a large deviation result with shrinking intervals for lattice potentials on mixing subshifts of finite type that are constant on 2 cylinders. We apply this theorem to pairs of word metrics on hyperbolic groups in Section in Section \ref{sec.ldwordmetric} and prove Theorem \ref{thm.wm}.

In Section \ref{sec.cubulation} we prove Proposition \ref{prop.groupsfornicecubulation} that describes large classes of groups included in $\frakG$. There we also prove Proposition \ref{prop.languageL_X}, in which we construct a finite-state automaton for pairs of compatible actions on $\CAT(0)$ cube complexes.  We use this automaton in Section \ref{sec.thmscub} to prove Theorems \ref{thm.manhattanfactsX} and \ref{thm.cubulation}, from which we deduce Theorems \ref{thm.manhattanfactsG} and \ref{thm.cubulation.Gversion}. 

Finally, in the appendix we prove a Proposition \ref{prop.appcriterioncvxcc}, a criterion for convex-cocompactness of subgroups of cubulable relatively hyperbolic groups, which may be of independent interest.


\subsection*{Acknowledgements}
Both authors are grateful to Richard Sharp for his comments and suggestions on a preliminary version of this paper. The second author would like to thank the Max Planck Institut f\"ur Mathematik for its hospitality and financial support.


\section{Preliminaries}\label{sec.prelim}


\subsection{Isometric group actions}

Let $\G$ be a finitely generated group acting by isometries on the metric space $(X,d_X)$ and let $x\in X$ be an arbitrary base point. The \emph{(stable) translation length} of this action is the function $\ell_X:\conj \ra \R$ given by
\[\ell_X[g]=\lim_{n\to \infty}{\frac{d_X(g^nx,x)}{n}} \ \text{ for }g\in [g] \text{ in }\conj.\] 

The \emph{exponential growth rate} of this action is the quantity 
\[v_X:=\limsup_{T \to \infty}\frac{\log \#\{g\in \G \colon d_X(gx,x)<T\}}{T}\in [0,+\infty].\] 
As for the translation lengths, the exponential growth rate is independent of the chosen base point $x$. If $X$ is geodesic (or more generally roughly geodesic) and the action of $\G$ on $X$ is proper and cocompact, then $v_X$ is finite. In some cases we can recover the exponential growth rate as the limit
\[v_X=v_X(\G)=\limsup_{T\to \infty}{\frac{1}{T}\log \# \frakC_X(T)},\]
where each $T>0$ we denote
\[\frakC_X(T)=\{[g]\in \conj \colon \ell_X[g]<T\}.\]
This happens for example when $\G$ is hyperbolic and the action on $X$ is proper and cocompact. 

Given two isometric actions of $\G$ on the metric spaces $X$ and $X_\ast$, the \emph{Manhattan curve} for the pair $(X, X_\ast)$ is the boundary of the convex set
\[
\mathcal{C}_{X_\ast/X}:=\left\{ (a,b) \in \R^2 : \sum_{[g] \in \conj} e^{-a\ell_X[g] - b\ell_{X_\ast}[g]} < \infty \right\},
\]
assuming it is non-empty. Equivalently, $\calC_{X_\ast/X}$ is the set of points $(s,\theta_{X_\ast/X}(s))$ where  $\theta_{X_\ast/{X}}(s)$ is the abscissa of convergence of the series
\[
t \mapsto \sum_{[g] \in \conj} e^{-t\ell_X[g] - s\ell_{X_\ast}[g]}.
\]
By abuse of notation, $\thet_{X_\ast/X}$ is also called the Manhattan curve for $(X,X_\ast)$.


\subsection{$\CAT(0)$ cube complexes}\label{subsec.prelimcub}

For bibliography about $\CAT(0)$ cube complexes and groups acting on them, we refer the reader to \cite{bridson-haefliger,sageev.book}. A \emph{non-positively curved (NPC) cube complex} is a metric polyhedral complex in which all polyhedra are unit-length Euclidean cubes, and satisfies Gromov’s link condition: the link of each vertex is a flag complex. If this complex is simply connected we say that it is a \emph{$\CAT(0)$ cube complex}. 

Let $\calX$ be an NPC cube complex. Consider the minimal equivalence relations on the set of edges (resp. oriented edges) of $\calX$ such that the edges $e=\{v,w\}$ and $e=\{v',w'\}$ (resp. oriented edges $v\xrightarrow{e} w$ and $v'\xrightarrow{e'} w'$) are in the same equivalence class if $v,w,v',w'$ span a square in $\calX$ (resp. $v,w,v'w'$ span a square in $\calX$ with $v,v'$ adjacent and $w,w'$ adjacent). Equivalence classes of these equivalence relations are called \emph{hyperplanes} (resp. \emph{oriented hyperplanes}), and we let $\bbH(\calX)$ denote the set of hyperplanes of $\calX$. If the equivalence class of an (oriented) edge $e$ is the (oriented) hyperplane $\frakh$, then we say that $e$ \emph{dual} to $\frakh$. A hyperplane is \emph{2-sided} if it corresponds to exactly two oriented hyperplanes, otherwise it is \emph{1-sided}. 

Suppose now that $\calX$ is a  $\CAT(0)$ cube complex, in which case all hyperplanes are 2-sided. A \emph{combinatorial path} in $\calX$ is a sequence $\gam=(\gam_0,\dots,\gam_n)$ of vertices in $\calX$ such that $\gam_i$ is adjacent to $\gam_{i+1}$ for $i=0,\dots,n-1$. In that case, we say that $\gam_0$ is the initial vertex of $\gam_0$ and $\gam_n$ is its final vertex. The length of $\gam=(\gam_0,\dots,\gam_n)$ is defined as $n$. This path is often seen as a continuous path by also considering the edges $e_{i+1}$ joining each $\gam_i$ with $\gam_{i+1}$. Such a path is \emph{geodesic} if no two distinct edges $e_i$ are dual to the same hyperplane. The combinatorial metric on $\calX$ is the graph metric $d_\calX$ on its 1-skeleton $\calX^1$ so that each edge has length 1. It follows that a combinatorial path is geodesic if an only if it is geodesic for the metric $d_\calX$.

A hyperplane $\frakh$ in the $\CAT(0)$ cube complex $\calX$  \emph{separates} two vertices of $\calX$ if some (any) combinatorial path connecting these vertices has an edge dual to $\frakh$. It follows that the combinatorial distance of any two vertices in $\calX$ equals the number of hyperplanes separating them. Also, a hyperplane $\frakh$ determines the equivalence relation of ``not being separated by $\frakh$'' on the set of vertices of $\calX$. This equivalence relation has exactly 2 equivalence classes $\{\frakh^-,\frakh^+\}$, which are the \emph{halfspaces} determined by $\frakh$. 
 A subcomplex of $\calX$ is \emph{convex} if its vertex set is the intersection of halfspaces. Equivalently, $Z\subset \calX$ is convex if any combinatorial geodesic joining points in $Z^0$ is contained in $Z^0$. 

An \emph{orthotope structure} on the $\CAT(0)$ cube complex $\calX$ is a function $$\frakw: \bbH(\calX) \ra \R_{>0},$$
and the pair $\calX^\frakw=(\calX,\frakw)$ is called a \emph{cuboid complex}. An orthotope structure induces a metric $d_\calX^\frakw$ on $\calX^1$ by declaring each edge $e$ to have length $\frakw(\frakh)$ for $\frakh$ the hyperplane dual to $e$. In this way, for any two vertices $x,y\in \calX^0$ we have
\[d_\calX^\frakw(x,y)=\sum_{\frakh\in \bbH(x|y)}{\frakw(\frakh)},\]
where $\bbH(x|y)\subset \bbH(\calX)$ is the collection of hyperplanes separating $x$ and $y$. Note that if $\frakw$ is the constant function equal to 1, then $d_\calX^\frakw$ is just the standard combinatorial metric $d_\calX$.

\begin{remark}\label{rmk.geodorthotope}
    It is clear that a geodesic in $\calX$ with respect to $d_\calX$ is also geodesic with respect to $d_\calX^\frakw$ for any orthotope structure $\frakw$. 
\end{remark}

Now let $\G$ be a group acting on the $\CAT(0)$ cube complex $\calX$. We always assume that the action is cubical, and hence by isometries on $\calX^1$ with the combinatorial metric $d_\calX$. Similarly, $\ell_\calX$ always denotes the stable translation length of $\G$ with respect to the action on $(\calX^1,d_\calX)$. If the action of $\G$ is proper and cocompact, we say that $\calX$ is a \emph{cubulation} of $\G$. 

The action of $\G$ on $\calX$ induces a natural action on $\bbH(\calX)$. If $\frakw$ is a $\G$-invariant orthotope structure on $\calX$ in the sense that $\frakw(\frakh)=\frakw(g\frakh)$ for all $g\in \G$ and $\frakh\in \bbH(\calX)$, then we say that $\G$ \emph{acts} on the cuboid complex $(\calX,\frakw)$. In that case the action of $\G$ on $(\calX^1,d_\calX^\frakw)$ is also by isometries. We let $\ell_\calX^\frakw$ denote the stable translation length of $\G$ for its action on $(\calX^1,d_\calX^\frakw)$.

By a \emph{hyperplane stabilizer} we mean a subgroup of $\G$ group elements $g$ such that $g \frakh=\frakh$ for some fixed hyperplane $\frakh\in \bbH(\calX)$. A hyperplane $\frakh$ is \emph{essential} for the action of $\G$ if for any vertex $x$ of $\calX$, the halfspaces $\frakh^\pm$ contain elements in the $\G$-orbit of $x$ arbitrarily far from $\frakh^{\mp}$. The action of $\G$ on $\calX$ is \emph{essential} if every hyperplane is essential.  
   
If $\calX$ is a cubulation of $\G$, a subgroup $H<\G$ is called \emph{convex-cocompact} with respect to $\calX$ if there exists a convex subcomplex $Z\subset \calX$ that is $H$-invariant and so that the action of $H$ on $Z$ is cocompact. Such a subcomplex $Z$ is called a \emph{convex core} for $H$. Note that the hyperplane stabilizer of any hyperplane $\frakh$ is convex-cocompact since it acts cocompactly on the (convex subcomplex spanned by the) set of vertices in edges dual to $\frakh$ \cite[Lem.~13.4]{haglund-wise.special}.


If $\calX$ is a $\CAT(0)$ cube complex and $\bbW\subset \bbH(\calX)$ is any collection of hyperplanes, in \cite[Sec.~2.3]{caprace-sageev} Caprace and Sageev introduced the \emph{restriction quotient}, which is a $\CAT(0)$ cube complex $\calX(\bbW)$ equipped with a surjective cellular map $\phi:\calX \ra \calX(\bbW)$ satisfying the following: an edge in $\calX$ is collapsed to a single vertex under $\phi$ if and only if it is dual to a hyperplane not in $\bbW$. The projection $\phi$ induces a natural bijection between $\bbW$ and $\bbH(\calX(\bbW))$, and hence any orthotope structure $\frakw$ on $\calX$ induces an orthotope structure $\phi_\ast(\frakw)$ on $\calX(\bbW)$.


\begin{remark}\label{rmk.projquot}
Note that (pre)images of convex subcomplexes under restriction quotients remain convex. In particular, images of geodesic paths remain geodesic, although some subpaths are allowed to collapse to points. Also note that if $\G$ acts on $\calX$ and $\bbW$ is $\G$-invariant, then there is a natural action of $\G$ on $\calX(\bbW)$. 
\end{remark}

A cubulation $\calX$ of $\G$ is \emph{co-special} if the quotient $\ov\calX=\G \bs \calX$ is a \emph{special cube complex} in the sense of Haglund-Wise \cite{haglund-wise.special}. Equivalently, $\calX$ is co-special if $\G$ injects into a right-angled Artin group $A_G$ inducing a $\G$-equivariant isometric embedding of $\calX$ into $R_G$ as a convex subcomplex, where $R_G$ is the universal cover of the \emph{Salvetti complex} $\ov{R}_G$ associated to the graph $G$. Among other properties of special cube complexes, hyperplanes are 2-sided and embedded, and they do not self-osculate \cite[Def.~3.2]{haglund-wise.special}. In particular, different oriented edges with the same initial vertex are dual to different oriented hyperplanes. The cubulation $\calX$ of $\G$ is \emph{virtually co-special} if there exists a finite-index subgroup $\ov\G <\G$ such that the action of $\ov\G$ on $\calX$ is co-special.


\subsection{Symbolic dynamics}\label{sec.sd}
In this section we introduce the preliminary material we need from symbolic dynamics. See \cite{parry.pollicott} for more details.
Let $A$ be a $k \times k$ matrix with entries $0$ or $1$. 
This matrix is said to be \emph{aperiodic} if there exists $N \ge 1$ such that all of the entries of $A^N$ are strictly positive. We say that $A$ is \emph{irreducible} if for any $i,j \in \{1, \ldots, k \}$ there exists $n \ge 1$ such that $A_{i,j}^n$ is strictly positive.

The (one-sided) \emph{subshift of finite type} $\Sigma_A$ associated to $A$ is the set of infinite sequences
\[
\Sigma_A = \left\{(x_n)_{n=0}^{\infty} : x_j \in \{1, \ldots, k\} \text{ and } A_{x_j, x_{j+1}} =1 \text{ for all } j \ge 0\right\}.
\]
These infinite sequences can be seen as infinite paths in a directed graph $\mathcal{G}_A$ with vertices labelled $1, \ldots, k$ and a directed edge from vertex $i$ to $j$ if and only if $A_{i,j} =1$. We will therefore refer to the numbers $1, \ldots, k$ as the \emph{states} of $\Sigma_A$. We equip $\Sigma_A$ with the \emph{shift map} $\sigma: \Sigma_A \to \Sigma_A$ defined by
\[
\sigma((x_n)_{n=0}^\infty) = (x_{n+1})_{n=0}^\infty
\]
to obtain a dynamical system $(\Sigma_A, \sigma)$. 

Consider a finite ordered string $x_0, \ldots, x_{m-1} \in \{1, \ldots, k \}$ where $A_{x_j, x_{j+1}} =1$ for each $j=0, \ldots, m-2$. The \emph{cylinder set} associated to this string is the subset of $\Sigma_A$ given by
\[
[x_0, \ldots, x_{m-1}] := \left\{ (y_n)_{n=0}^\infty \in \Sigma_A : y_j = x_j \text{ for } j=0, \ldots, m-1 \right\}.
\]
We endow $\Sigma_A$ with a topology by declaring the set of all cylinder sets to be an open basis.

The system $(\Sigma_A, \sigma)$ is said to be \emph{mixing} if for any two open sets $U, V \subset \Sigma_A$ there is $N \ge 1$ such that $\sigma^n(U) \cap V \neq \emptyset$ for all  $n \ge N$. We say that  $(\Sigma_A, \sigma)$ is \emph{transitive} if for any two open sets $U,V \subset \Sigma_A$ there exists $n \ge 1$ such that $\sigma^n(U) \cap V \neq \emptyset.$ We have that $(\Sigma_A, \sigma)$ is mixing if and only if $A$ aperiodic and $(\Sigma_A, \sigma)$ is transitive if and only if $A$ is irreducible. We will often suppress the dependence of $A$ in the notation for a subshift and will write $(\Sigma, \sigma)$.\\

Throughout the rest of the section $(\Sigma, \sigma)$ will be a mixing subshift of finite type, and consider a function $\psi:\Sigma \to \R$. We say that $\psi$ is \emph{constant on $2$ cylinders} if $\psi$ is constant on each set of the form $[x_0, x_1]$ where $x_0, x_1 \in \{1, \ldots, k \}$ and $A_{x_0, x_1} = 1$. For each $n\ge 1$ the $n$th \emph{Birkhoff sum} of $\psi$ is the function 
\[
\psi^n : \Sigma \to \R  \ \text{ such that } \ \psi^n(x) := \psi(x) + \psi(\sigma(x)) + \cdots + \psi(\sigma^{n-1}(x)).
\]
A point $x \in \Sigma$ is said to be \emph{periodic} if $\sigma^n(x) = x$ for some $n \ge 1$. Such an $n$ is called a \emph{period} of $x$ and it is often denoted by $|x|$. Note that a periodic point has infinitely many periods, and in general we are not writing $|x|$ for the least period of $x$.

Two functions $\psi, \varphi: \Sigma \to \R$, which we are assuming to be constant on $2$ cylinders, are said to be \emph{cohomologous} if there exists a continuous function $u: \Sigma \to \R$ such that
$\psi(x) = \varphi(x) + u(\sigma(x)) - u(x)$ for all $x \in \Sigma$. By Livsic's Theorem \cite{parry.pollicott}, $\psi$ and $\varphi$ are cohomologous if and only if $\psi^n(x) = \varphi^n(x)$ whenever $\sigma^n(x) = x.$

The variational principle states that there is a unique $\sigma$-invariant Borel probability measure on $(\Sigma, \sigma)$ that achieves the supremum
\[
\text{P}(\psi) := \sup_{\mu\in \calM_\sig}\left\{ h_\mu(\sigma) + \int \psi \ d\mu \right\},
\]
where $\calM_\sig$ is the collection of all $\sigma$-invariant Borel probability measures on $\Sig$ and $h_\mu(\sig)$ denotes the \emph{(metric) entropy} of $\sig$ with respect to the measure $\mu$. The quantity $\text{P}(\psi)$ is referred to as the \textit{pressure} of $\psi$ and the measure attaining the supremum is called the \textit{equilibrium state} of $\psi$. When $\psi$ is a constant function, the measure achieving the supremum for the pressure of $\psi$ is the \emph{measure of maximal entropy}. Furthermore the \emph{topological entropy} $h=h(\sig)$ of $(\Sigma,\sigma)$  is given by $h = \text{P}(0)$. 


Consider the quantities
\begin{equation*}
\al_{\min} := \inf_{\mu \in \calM_\sig}\int_\Sigma \p \ d\mu \ \ \text{ and } \ \ \al_{\max} := \sup_{\mu \in \calM_\sig} \int_\Sigma \p \ d\mu.   
\end{equation*}
The large deviation principle implies that there exists a real analytic, concave function $\calL(\psi, \cdot) : \R \to \R_{>0}\cup \{\infty\}$ such that, for any non-empty sets $U \subset V \subset \R$ with $U$ open and $V$ closed we have
\begin{align*}
    -\inf_{s\in U} \calL(\psi, s) &\le  \liminf_{n\to\infty}  \frac{1}{n} \log \mu \left( x \in \Sigma : \left| \frac{\psi^n(x)}{n} - \eta \right| < \epsilon \right)\\
    &\le \limsup_{n\to\infty}  \frac{1}{n} \log \mu \left( x \in \Sigma : \left| \frac{\psi^n(x)}{n} - \eta \right| < \epsilon \right) \le -\inf_{s\in V} \calL(\psi,s).
\end{align*}
The function $\calL(\psi,\cdot)$ is the Legendre transform of $t \mapsto  \text{P}(t\psi) - h$. That is,
\[
 - \calL(\psi,s) = \inf_{t\in \R} (\text{P}(t\psi) - h - ts ).
\]
Furthermore, $\calL(\psi,\cdot)$ is finite on $[\alpha_{\min}, \alpha_{\max}]$ and is infinite otherwise. An alternative characterisation for $\calL$ is the following:
\[
-\calL(\p,\eta)= \sup\left\{ h_\mu(\sigma) : \mu \in\mathcal{M}_\sigma \text{ and} \int \p \ d\mu = \eta\right\} - h.
\]

A function $\psi: \Sig \ra \R$ is \emph{lattice} if there are $a,b \in \R$ satisfying
\[
\left\{ \p^n(x) + an : x\in\Sigma \text{ and } \sigma^n(x)=x \text{ for some } n\ge 1 \right\} \subseteq b\Z.
\] If this is not the case then we say that $\p$ is \emph{non-lattice}. 
\begin{remark}\label{rem.lattice}
    Suppose that $\p$ is lattice. Then $\p$ is cohomologous to a function of the form $a + b \varphi$ where $a,b \in \R$ and $\varphi: \Sigma \to \Z$ \cite{parry.pollicott}. When this is the case, the large deviation behaviour of $\p$ and $\varphi$ over periodic orbits is the same, since  $\p^n(x) = an + b\varphi^n(x)$ when $\sigma^n(x) =x$.
\end{remark}


\subsection{Suspension flows}\label{sec.sf}
In this section we define suspension flows of subshifts of finite type. See \cite{parry.pollicott} for more details on the results stated in this section.
Let $\Sigma_A$ be a transitive subshift of finite type and $r: \Sigma_A \to \R_{>0}$ a function that is constant on $2$ cylinders. We note that in \cite{parry.pollicott} suspension flows are considered over mixing subshifts however the same proofs (with some minor modifications) work when the subshift is transitive.  We define the \emph{suspension flow} of $\Sigma_A^r$ to be the space
\[
\Sigma_A^r = \{(x,t) \in \Sigma_A \times \R_{\ge 0} : 0 \le t \le r(x) \} / \sim
\]
where $(x,t) \sim (r(x),0)$ equipped with the flow $\sigma^r=(\sigma^r_t)_{t>0}$ so that $\sig_t^r$ send $(x,s)$ to $(x,s+t)$ for $s \in \R$. There is a natural metric on $\Sigma_A^r$ which can be constructed as in \cite{parry.pollicott}. We will not present the construction of this metric here as it is a little technical.

For a H\"older continuous function $\Phi : \Sigma_A^r \to \R$ we can define its pressure as 
\[
\text{P}_{\sigma^r}(\Phi) = \sup_{m \in \mathcal{M}_{\sigma^r}}  \left\{ h_m(\sig^r) + \int_{\Sigma_A^r} \Phi \ dm \right\},
\]
 where $\mathcal{M}_{\sigma^r}$ is the space of $\sigma^r_t$- invariant Borel probability measures on $\Sigma_A^r$ and $h_m(\sig^r)$ is the entropy of the time-one map $\sig^r_1$ for the measure $m$. The map $s \mapsto \text{P}_{\sigma^r}(s\Phi)$ is real analytic. Let $\delta_r >0$ be the unique number such that $\text{P}(-\delta_r r) =0$ and write $\mu_{-\delta_r r}$ for the equilibrium state of $-\delta_r r$ on $\Sigma_A$. The measure of maximal entropy for $\Sigma_A^r$ is (locally) given by
 \[
 \frac{\mu_{-\delta_r r} \times \text{Leb}}{\int r \ d\mu_{-\delta_r r}},
 \] i.e. up to normalisation it is the measure that acts as Lebesgue along the fibers of the suspension and as $\mu_{-\delta_r r}$ on the base. If we write $m$ for the measure of maximal entropy then we have that
 \[
\left. \frac{d}{ds}\right|_{s=0} \text{P}_{\sigma^r}(s\Phi) = \int \Phi \ dm.
 \]

 For $T>0$ we will write $P(\Sigma_A^r, T)$ for the collection of periodic orbits of $\sigma^r$ of length less than $T$. Given $R >0$ we will write $P(\Sigma_A^r, R , T)$ for the collection of periodic orbits of length between $T -R$ and $T+R$.
 
Given a H\"older continuous function $\Phi: \Sigma_A^r \to \R$ it is a standard result that for any $R >0$ sufficiently large
\[
\lim_{T\to\infty} \frac{1}{T} \log \left(\sum_{\tau \in P(\Sigma_A^r, R, T)} e^{-s \int_\tau \Phi }\right) = \text{P}_{\sigma^r}(-s\Phi)
\]
for any $ s\in\R$.

Lastly we recall that two functions $\Phi, \Psi: \Sigma_A^r \to \R$ are \emph{cohomologous} if $\Phi - \Psi = u'$  where $u : \Sigma_A^r \to \R$ is continuously differentiable (along flow lines) and
\[
u'(x) = \lim_{t\to0} \frac{u(\sigma^r_t(x)) - u(x)}{t}.
\]
Further the function $s \mapsto P_{\sigma^r}(s\Phi)$ is a straight line if and only if $\Phi$ is cohomologous to a constant function.


\subsection{Finite-state automata}
Let $S$ be a finite set and let $S^\ast$ denote the set of finite words over the alphabet $S$. If $w=h_1\cdots h_n$ and $w'=h'_1\cdots h_m'$ are words in $S^\ast$, then its \emph{concatenation} is the word $ww':=h_1\cdots h_nh'_1\cdots h_m'$. The \emph{length} of a word is the number of letters in $S$ composing it. We let the empty set correspond to the unique word of length 0 in $S^\ast$. A \emph{language} over $S$ is any subset of words in $S^\ast$.

An \emph{automaton} over $S$ is a tuple
\[\mathcal{A}=(\mathcal{G},\pi,I,F),\]
where $\calG=(V,E)$ is a finite directed graph, $\pi:E\rightarrow S$ is a \emph{labeling function} and $I,F\subset V$ are non-empty sets of \emph{initial} and \emph{final} states. 

\begin{remark}\label{rmk.initialstates}
    This convention differs from the standard definition of automaton, where it is required for $I$ to consist of a single vertex. 
\end{remark} 

By a \emph{path} in $\calG$ we mean a sequence $\om$ of (always directed) edges $e_1,\dots,e_n$ in $E$ such that the final vertex $v_i$ of $e_i$ is the initial vertex of $e_{i+1}$ for $i=1,\dots, n-1$. If $v_0$ is the initial vertex of $e_1$, we denote this path $\om$ either by $\om=(\xrightarrow{ e_1} \cdots \xrightarrow{e_n})$ or $\om=(v_0\xrightarrow{ e_1} \cdots \xrightarrow{e_n} v_n)$ depending on the emphasis we want to give to the vertices. 
If there is no ambiguity on the edges, we can also denote this path by $(v_0 \ra \cdots \ra v_n)$. The \emph{length} of a path is the number of edges that determine it. Note that paths of length 1 correspond to the edges in $E$. We also allow the degenerate case of paths of length 0, which are the vertices in $V$.

If $\om, \om'$ are paths in $\calG$ such that the final vertex of $\om$ is the initial vertex of $\om'$, then the concatenation $\om \om'$ is the path in $\calG$ defined in the expected way. Similarly we define the concatenation of any finite number of paths. 

A word $w$ in $S^\ast$ is \emph{represented} by the path $\omega=(v_0\xrightarrow{ e_1} v_1\cdots \xrightarrow{e_n} v_n)$  in $\calG$ if $w=\pi(e_1)\cdots\pi(e_n)$. If in addition $v_0\in I$ and $v_n\in F$, we say that $w$
\emph{accepted} by $\calA$. It is clear that the word represented by a concatenation $\om \om'$ is the concatenation of the words represented by $\om$ and $\om'$.
Let $L=L_\calA$ be the language consisting of the words accepted by $\calA$. In this case we say that $L$ is \emph{parametrized} by $\calA$.

The automaton $\calA$ is \emph{deterministic} if any two edges in $\calG$ with the same initial vertex have different labels. In that case, for any $w\in L_\calA$ and any initial state $v\in I$ there exists at most one path in $\calG$ representing $w$ and starting at $v$. The automaton is \emph{pruned} if any vertex in $\calG$ is the final vertex of a path starting at an initial state.


\section{Large deviations} \label{sec.ld}

In this section we discuss large deviations with shrinking for potentials on mixing subshifts of finite type. The main result of the section is Theorem \ref{thm.ldtsi}, and it will be used in the proof of Theorems \ref{thm.wm} and \ref{thm.cubulation} in subsequent sections. 

Suppose that $(\Sigma,\sigma)$ is a mixing subshift of finite type with $k \times k$ transition matrix $A$, and
let $\mu$ denote its measure of maximal entropy. Also, let $M$ be the least number such that $A^M$ has strictly positive entries. The large deviation principle from Section \ref{sec.sd} implies that there is a real analytic, concave function $\calL(\p, \cdot): [\alpha_{\min}, \alpha_{\max}] \to \R_{>0}$ such that the following holds: for any $\eta \in (\alpha_{\min}, \alpha_{\max})$
\begin{equation*}\label{eq.ldp}
 \lim_{\epsilon \to 0^-} \limsup_{n\to\infty} \frac{1}{n} \log \mu \left( x \in \Sigma : \left| \frac{\p^n(x)}{n} - \eta \right| < \epsilon \right) = - \calL(\p,\eta).   
\end{equation*}

Instead of taking two limits as above, first with respect to $n$ and then with respect to $\epsilon$, it is natural to ask the following. How quickly can a sequence $\d_n$ decay to $0$ as $n\to\infty$ so that we have
\begin{equation}\label{eq.ldpsi}
 \lim_{n\to\infty} \frac{1}{n} \log \mu\left(  x \in \Sigma :  \left| \frac{\p^n(x)}{n} - \eta \right| < \d_n  \right) = - \calL(\p,\eta)   
\end{equation}
for each $\eta \in (\alpha_{\min}, \alpha_{\max})$? 
We refer to this problem as \textit{large deviations with shrinking intervals}. We can ask the same question when the limit in \eqref{eq.ldpsi} is replaced with the limit supremum.


Large deviations with shrinking intervals are best understood for potentials $\p$ that are non-lattice. For example, when $\p$ is non-lattice the local central limit theorem \cite{lclt} implies that \eqref{eq.ldpsi} holds when $\delta_n^{-1} = O(n)$. 
In \cite{pollicott.sharp} Pollicott and Sharp improved this result under an additional assumption. They showed that if $\p$ satisfies a non-Diophantine condition then there exist $\kappa >0$ such that $\eqref{eq.ldpsi}$ holds when $\d_n^{-1} = O(n^{1+\kappa})$. 

When $\p$ is lattice, large deviations with shrinking intervals are not as well understood. The aim of this section is to study 
\eqref{eq.ldpsi} for functions $\p$ that are constant on $2$ cylinders and are lattice. 

We now state our large deviation theorem with shrinking intervals. Write $\calL(\p, \cdot): [\alpha_{\min}, \alpha_{\max}] \to \R_{>0}$ be the function introduced above.
\begin{theorem} \label{thm.ldtsi}
   Suppose that $(\Sigma,\sigma)$ is a mixing subshift of finite type and that $\p : \Sigma \to \R$ is a function that is constant on $2$ cylinders. Then the following holds. There exists $C >0$ such that for any $\eta \in (\alpha_{\min}, \alpha_{\max})$ 
\[
  \limsup_{n\to\infty} \frac{1}{n} \log\left( \#\left\{ x \in \Sigma: \sigma^n(x) = x \text{ and } \left| \frac{\p^n(x)}{n} - \eta \right| < \frac{C}{n^2} \right\} \right) = h -\calL(\p,\eta)
\]
where $h$ is the topological entropy of $(\Sigma,\sigma)$.
    Furthermore we can take
    \[
     C = \frac{4M^2(1+k^2)^2(\alpha_{\max} - \alpha_{\min}) }{\sqrt{5}}.
    \]
    
In the case that $\p$ takes values in $\Z$ then there exists $\eta \in (\alpha_{\min}, \alpha_{\max})$ and $\epsilon >0$ such that
     \[
    \limsup_{n\to\infty} \frac{1}{n} \log\left( \#\left\{ x \in \Sigma: \sigma^n(x) = x \text{ and } \left| \frac{\p^n(x)}{n} - \eta \right| < \frac{\epsilon}{n^2} \right\} \right) = 0.
    \] 
\end{theorem}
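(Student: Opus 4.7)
My plan splits the argument into a standard upper bound and a considerably more delicate lower bound, together with a short Diophantine argument for the final assertion. For the upper bound I would observe that $\{x : |\p^n(x)/n - \eta| < C/n^2\} \subset \{x : |\p^n(x)/n - \eta| < \epsilon\}$ for every fixed $\epsilon > 0$ once $n$ is large, so the standard Gibbs-measure counting of periodic orbits on mixing subshifts of finite type, together with the variational characterisation $-\calL(\p,\eta) = \sup\{h_\mu(\sigma) : \int \p\, d\mu = \eta\} - h$ recalled in Section~\ref{sec.sd}, immediately yields $\limsup_{n\to\infty} n^{-1}\log \# \leq h - \calL(\p,\eta)$.

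The matching lower bound is where the work lies, and my plan is to produce many periodic orbits by concatenating short loops along a Diophantine subsequence of $n$. First I would pick two short periodic orbits $\tau_1, \tau_2$ of periods $p_1, p_2 \leq k$ with Birkhoff averages $\eta_1 < \eta < \eta_2$ straddling $\eta$; these exist because the convex hull of Birkhoff averages over simple cycles in the directed graph $\calG_A$ equals $[\alpha_{\min}, \alpha_{\max}]$. Using the mixing hypothesis $A^M > 0$ I would form the concatenation $\tau_1^a P_{12} \tau_2^b P_{21}$, where $P_{12}, P_{21}$ are transition paths of length exactly $M$ between the basepoints; this is a valid periodic orbit of length $n(a,b) = ap_1 + bp_2 + 2M$ whose Birkhoff sum differs from $ap_1\eta_1 + bp_2\eta_2$ by $O(M\sup|\p|)$. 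Applying Hurwitz's theorem to the target ratio $p_2(\eta_2 - \eta)/[p_1(\eta - \eta_1)]$ yields infinitely many integer pairs $(a,b)$ with $|a/b - \text{target}| < 1/(\sqrt{5}\,b^2)$, and after absorbing the transition offset (either by comparing to a reference orbit with the same skeleton, or by varying the choice of transition paths among the $\Theta(k^2)$ possibilities supplied by $A^M > 0$), the resulting orbits satisfy $|\p^n(x) - n\eta| < C/n$ with the explicit constant $C = 4M^2(1+k^2)^2(\alpha_{\max} - \alpha_{\min})/\sqrt{5}$ arising from careful bookkeeping of the bounds $p_1, p_2 \leq k$, the edge count $\leq k^2$, the transition cost, and the Hurwitz factor $1/\sqrt{5}$.

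To upgrade this single construction to exponentially many admissible orbits I would keep an outer skeleton of the concatenation fixed and replace a long middle block of sub-length $L = \Theta(n)$ with any admissible loop whose Birkhoff sum falls in a prescribed range that keeps the total inside the $C/n$-window around $n\eta$; a Bahadur-Rao-style local large deviation estimate for potentials constant on $2$-cylinders, proved via spectral perturbation of the transfer operator acting on locally constant functions, supplies $e^{L(h - \calL(\p,\eta)) + o(L)}$ such replacements, each yielding a distinct periodic point. Summing along the Hurwitz subsequence of $n$ then delivers the matching lower bound. The hardest step is this local large deviation estimate at window width $O(1/n)$: the window is far thinner than the $\Omega(\sqrt{n})$-windows of the classical local central limit theorem, so sharp uniform control of the Gibbs-measure mass on each residue class of achievable Birkhoff sums is required, which is particularly delicate when $\p$ is lattice.

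Finally, for the integer-valued assertion I would use that any badly approximable irrational $\eta$ (for example $(\sqrt{5}-1)/2$) satisfies $\mathrm{dist}(n\eta,\Z) \geq c/n$ for every $n \geq 1$ and some fixed $c > 0$. Taking $\epsilon < c$ forces the counted set to be empty for every $n$, since an integer Birkhoff sum cannot lie within $\epsilon/n$ of $n\eta$, so the exponential rate vanishes. This shows that the constant $C$ produced in the first part cannot be replaced by arbitrarily small constants and captures the arithmetic obstruction underlying the theorem.
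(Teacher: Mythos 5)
Your high-level plan is sound, but there are two genuine gaps in the lower bound that the paper resolves by different means. First, the transition offset: the concatenation $\tau_1^a P_{12} \tau_2^b P_{21}$ introduces a constant-order error $c = O(M\sup|\p|)$ in the Birkhoff sum that does not vanish as $a,b\to\infty$, whereas the target precision is $|\p^n(z)-n\eta| < C/n$. You then need $ap_1(\eta_1-\eta)+bp_2(\eta_2-\eta) \approx 2M\eta - c$ with error $O(1/(a+b))$, which is an \emph{inhomogeneous} Diophantine approximation problem; Hurwitz applied to $a/b$ does not account for the additive shift, there are only finitely many transition paths to vary, and ``comparing to a reference orbit'' is not a mechanism. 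The paper sidesteps this entirely: Lemma~\ref{lem.construct} constructs two periodic orbits with the \emph{same initial state and the same period} $l\le 2M(1+k^2)$ whose averages straddle $\eta$, so in Lemma~\ref{lem.unifint} one can concatenate $n_1$ copies of one and $n_2$ copies of the other into a genuine periodic orbit whose Birkhoff average is \emph{exactly} the convex combination $(n_1A+n_2B)/(n_1+n_2)$, with no offset, and the homogeneous Hurwitz bound of Lemma~\ref{lem.lc} then lands the average within $\overline C/|z|^2$ of $\eta$.

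Second, for the exponential multiplicity you propose a ``Bahadur--Rao-style local large deviation estimate at window width $O(1/n)$'' which you yourself flag as the hardest step and leave unproved. Since the admissible Birkhoff sums lie on a lattice, a window of width $O(1/n)$ around $n\eta$ contains at most one admissible value, so you are in fact counting periodic orbits with an \emph{exact} Birkhoff sum, where the classical LCLT gives nothing. This is precisely the result of Marcus and Tuncel \cite[Thm.~14]{marcus.tuncel}, which the paper invokes in \eqref{eq.mt} after first showing (via the lemma preceding Proposition~\ref{thm.main}) that the weight $w_n=\p^{|x_n|}(x_n)/|x_n|$ produced by the Diophantine construction automatically satisfies $d(\p,w_n)\mid |x_n|$. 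Your sketch of the integer-valued case is correct and matches the paper's.
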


\begin{remark}\label{rem.transitive}
i) This theorem also holds if we only assume that $(\Sigma,\sigma)$ is transitive (opposed to mixing). Indeed, if $(\Sigma,\sigma)$ is transitive we can find an integer $p \ge 1$ such that $(\Sigma,\sigma^p)$ decomposes as a disjoint union of $p$, $\sigma^p$-invariant sets. These $\sigma^p$-invariant sets are mixing subshifts of finite type when equipped with $\sigma^p$. We can then apply the mixing version of our theorem above to these subshifts to deduce the transitive version.\\
    ii) Our result significantly improves the decay rate implied by the local limit theorem under the non-lattice assumption. Furthermore, the case that $\p$ takes values in a lattice shows that the decay rates obtained in the first part of Theorem \ref{thm.ldtsi} are optimal.
\end{remark}

For the rest of the section we note the correspondence between periodic orbits of $(\Sig,\sig)$ and cycles (i.e. closed paths) in the adjacency graph $\calG_A$. We say that a cycle is \emph{simple} if it does not visit any state more than once.

To prove the above result we start with the following observation.
\begin{lemma} \label{lem.achieve}
    Suppose that $(\Sigma,\sigma)$ is a mixing subshift of finite type and that $\p : \Sigma \to \R$ is a function that is constant on $2$ cylinders. Then
    \[
    \alpha_{\min} = \inf_{\sigma^n(x) = x} \frac{\p^n(x)}{n} \ \text{ and } \  \alpha_{\max} = \sup_{\sigma^n(x) = x} \frac{\p^n(x)}{n},
    \]
    and furthermore there exist periodic orbits $\overline{x},\overline{y}$ that achieve these values, i.e. $\p^{|\ov x|}(\overline{x}) = |\ov x|\alpha_{\min}, \p^{|\ov y|}(\overline{y}) = |\ov y|\alpha_{\max}.$
    Here the infimum and supremum are over all the periodic orbits.
\end{lemma}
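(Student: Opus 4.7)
The plan is to split the lemma into two complementary claims: first, that the infimum and supremum of $\psi^n(x)/n$ over periodic orbits are attained (by exhibiting explicit minimizers and maximizers), and second, that these attained values coincide with $\alpha_{\min}$ and $\alpha_{\max}$. The whole argument hinges on exploiting the fact that $\psi$, being constant on $2$-cylinders, can be treated as a labelling on the directed edges of the adjacency graph $\calG_A$, so Birkhoff sums over periodic orbits are exactly sums of labels along closed walks.

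For attainment, the key step is a simple-cycle decomposition. Given any periodic point $x$ with $\sigma^n(x)=x$, the word $(x_0,x_1,\dots,x_{n-1},x_0)$ traces a closed walk $\gamma$ in $\calG_A$. By picking the first vertex that is revisited and peeling off the simple loop it encloses, I can inductively decompose $\gamma$ into simple cycles $C_1,\dots,C_r$ of lengths $n_1,\dots,n_r$ (with $n_1+\dots+n_r=n$) and label sums $S_1,\dots,S_r$, giving
\[
\frac{\psi^n(x)}{n} \;=\; \sum_{i=1}^r \frac{n_i}{n}\cdot\frac{S_i}{n_i}.
\]
Thus every periodic average is a convex combination of simple-cycle averages. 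Since $\calG_A$ is finite it has only finitely many simple cycles, so the minimum and maximum of the values $S_i/n_i$ are realised: taking $\overline{x}$ and $\overline{y}$ to be the periodic points that traverse the minimizing and maximizing simple cycles gives the desired attaining orbits.

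To identify these extrema with $\alpha_{\min},\alpha_{\max}$, the easy direction comes from the fact that each periodic orbit measure $\mu_x=\tfrac1n\sum_{j=0}^{n-1}\delta_{\sigma^j x}$ is $\sigma$-invariant and satisfies $\int\psi\,d\mu_x=\psi^n(x)/n$, so $\alpha_{\min}\le \psi^{|\overline{x}|}(\overline{x})/|\overline{x}|$ and $\psi^{|\overline{y}|}(\overline{y})/|\overline{y}|\le\alpha_{\max}$. For the reverse inequalities, I would use the ergodic decomposition to reduce to ergodic $\mu$, pick a Birkhoff-generic point $x$ so that $\psi^n(x)/n\to\int\psi\,d\mu$, and then close the finite word $(x_0,\dots,x_{n-1})$ into a periodic word. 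The mixing constant $M$ from the statement (the least integer with $A^M>0$) provides, for each $n$, a connecting path of length at most $M$ from $x_{n-1}$ back to $x_0$; concatenating yields a periodic point $y$ of period $n+k$ with $k\le M$. Since $\psi$ takes only finitely many values it is bounded, so $|\psi^{n+k}(y)-\psi^n(x)|=O(M)$ and $\psi^{n+k}(y)/(n+k)\to\int\psi\,d\mu$; taking the infimum over ergodic $\mu$ yields $\inf_{\sigma^n(x)=x}\psi^n(x)/n\le\alpha_{\min}$, and symmetrically for the supremum.

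The simple-cycle decomposition is elementary and finite, so the only subtle point is the closing-up step when matching with $\alpha_{\min}$ and $\alpha_{\max}$. This could alternatively be handled by citing weak-$\ast$ density of periodic orbit measures in $\mathcal{M}_\sigma$ for mixing SFTs (Sigmund's theorem), combined with continuity of $\mu\mapsto\int\psi\,d\mu$, but the explicit closing argument above feels cleaner here because the bound $M$ is already part of the ambient setup and will reappear in the constant $C$ in Theorem \ref{thm.ldtsi}.
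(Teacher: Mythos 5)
Your proposal is correct and follows essentially the same strategy as the paper: for attainment you use the simple-cycle decomposition of a closed walk exactly as the paper does, and for the identification of the infimum/supremum with $\alpha_{\min},\alpha_{\max}$ you reprove, via ergodic decomposition plus the explicit closing-of-orbits argument using the constant $M$, precisely the special case of Sigmund's density theorem that the paper simply cites. The only substantive difference is thus a trade-off in that one step: your version is more self-contained (no external citation, and the $M$-dependent bound meshes naturally with the later quantitative estimates), while the paper's version is shorter because Sigmund's result gives weak-$\ast$ density of periodic-orbit measures directly, avoiding the passage through ergodic measures and the Birkhoff-generic-point argument; both are valid and yield the same conclusion.
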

\begin{proof}
    The first statement follows from a result of Sigmund \cite{sigmund} which states that the set of probability measures supported on periodic orbits  
    is dense in $\mathcal{M}_\sig$ (equipped with the weak-$\ast$ topology). For the furthermore statement note that each periodic orbit can be written as a disjoint union of simple cycles. It follows easily that the above infimum and supremum are attained by simple cycles (and powers of them).
\end{proof}

We also need a more technical version of this lemma that applies on subsequences. We begin with the following observation.


\begin{lemma} \label{lem.lc}
    Take an interval $(s,t) \subset \R$ and a number $\eta \in (s,t)$. Then there are infinitely many $n \ge 1$ for which there exist integers $0 \le a,b \le n$ with $a + b = n$ and such that
    \[
\left|\frac{as + bt}{n} - \eta \right| \le \frac{t-s}{\sqrt{5} \, n^2}.
    \]
\end{lemma}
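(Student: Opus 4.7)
\medskip

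The plan is to reduce this to a classical Diophantine approximation statement. Write $\eta \in (s,t)$ as the convex combination $\eta = \lambda s + (1-\lambda)t$ where $\lambda = (t-\eta)/(t-s) \in (0,1)$. For integers $a,b \ge 0$ with $a+b=n$, a direct computation gives
\[
\frac{as + bt}{n} - \eta = \left(\frac{a}{n} - \lambda\right)s + \left(\frac{b}{n} - (1-\lambda)\right)t = \left(\frac{a}{n} - \lambda\right)(s - t),
\]
so the desired inequality is equivalent to finding infinitely many $n$ for which some integer $a \in [0,n]$ satisfies
\[
\left| \lambda - \frac{a}{n} \right| \le \frac{1}{\sqrt{5}\, n^{2}}.
\]

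Next I would invoke Hurwitz's theorem on Diophantine approximation: for any irrational $\lambda$, there exist infinitely many reduced fractions $p/q$ with $|\lambda - p/q| < 1/(\sqrt 5 \, q^{2})$. Taking $n=q$ and $a=p$ produces the required infinite sequence. If instead $\lambda$ is rational, say $\lambda = p/q$ in lowest terms, then for every multiple $n = kq$ the choice $a = kp$ gives equality $a/n = \lambda$, so the inequality holds trivially for infinitely many $n$. In either case, since $\lambda \in (0,1)$, for $n$ large the approximant $a$ lies strictly between $0$ and $n$, so the constraint $0 \le a, b \le n$ is automatic (and can be handled directly for small $n$ by enlarging the sequence further along the tail).

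There is no real obstacle here: the statement is essentially a restatement of Hurwitz's theorem after an affine change of variable. The only minor point to verify is that the approximants supplied by Hurwitz land in the admissible range $[0,n]$, which follows immediately from $\lambda \in (0,1)$ for $n$ sufficiently large. The constant $1/\sqrt{5}$ in the bound is precisely the sharp Hurwitz constant, which is what makes this formulation natural.
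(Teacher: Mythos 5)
Your proof is correct and takes essentially the same route as the paper: the paper reduces to the case $s=0$, $t=1$ by an affine change of variable and then cites Hurwitz's theorem, which is exactly what your explicit computation with $\lambda = (t-\eta)/(t-s)$ accomplishes. You simply spell out the reduction and the rational/irrational case split in more detail than the paper does.
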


\begin{proof}
Note that it suffices to prove this result when $s = 0$, $t=1$. The general result then follows by shifting and rescaling the interval $(0,1)$ into $(s,t)$.    When $s = 0$, $t=1$ the result follows from the well-known Hurwitz's Theorem \cite{hurwitz} from Diophantine approximation.
\end{proof}
 We also need the following lemma. Suppose that the simple periodic orbit $\overline{x}$ realising $\alpha_{\max}$ has initial state $i$ and that the initial state for $\overline{y}$ realising $\alpha_{\min}$ is $j$ (from Lemma \ref{lem.achieve}). Further assume that we repeat $\overline{x}$ and $\overline{y}$ by each other's periods so that they both have period $l$ satisfying $1<l \leq k^2$.

\begin{lemma}\label{lem.construct}
    Suppose that $(\Sigma,\sigma)$ is a mixing subshift of finite type and that $\p : \Sigma \to \R$ is a function that is constant on $2$ cylinders such that $\al_{\min}<\al_{\max}$. Then we can find periodic orbits $x, y$ both with period at most $2M(1+k^2)$ such that the initial state for $x$ is $i$, the initial state for $y$ is $j$ and we have that
    \[
    \alpha_{\min} < \frac{\p^{|x|}(x)}{|x|} < \frac{\p^{|y|}(y)}{|y|} < \alpha_{\max}.
    \]
\end{lemma}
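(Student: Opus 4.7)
The plan is to construct $x$ and $y$ as compound cycles built from $\bar{x}, \bar{y}$ together with connecting paths of length $M$, breaking symmetry by using different multiplicities of the two extremal cycles. If we used $\bar{x}$ and $\bar{y}$ the same number of times in both $x$ and $y$, the Birkhoff sums would coincide by cyclic symmetry and the two averages $\psi^{|x|}(x)/|x|$ and $\psi^{|y|}(y)/|y|$ would agree; we therefore take $x$ to contain $\bar{x}$ once and $\bar{y}$ twice, and $y$ to contain $\bar{y}$ once and $\bar{x}$ twice. This pushes the average over $x$ down toward $\alpha_{\min}$ and the average over $y$ up toward $\alpha_{\max}$.

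Concretely, since $A^M$ has all positive entries, we can fix admissible paths $\pi_1$ of length $M$ from $i$ to $j$ and $\pi_2$ of length $M$ from $j$ to $i$. Letting $p, q \le k$ denote the minimal periods of the simple cycles $\bar{x}, \bar{y}$, we define $x := \bar{x} \cdot \pi_1 \cdot \bar{y} \cdot \bar{y} \cdot \pi_2$ as a cycle at $i$ of period $p + 2q + 2M$, and $y := \bar{y} \cdot \pi_2 \cdot \bar{x} \cdot \bar{x} \cdot \pi_1$ as a cycle at $j$ of period $2p + q + 2M$. Both periods are at most $3k + 2M$, and since $\alpha_{\min} < \alpha_{\max}$ forces $k \ge 2$ (a one-state shift has constant $\psi$), the inequality $3k + 2M \le 2M(1+k^2)$ holds, matching the required bound.

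Since $\bar{x}, \bar{y}$ realize $\alpha_{\max}, \alpha_{\min}$ respectively, the Birkhoff sums along one period satisfy $\psi^{|x|}(x) = p\alpha_{\max} + 2q\alpha_{\min} + \Psi$ and $\psi^{|y|}(y) = 2p\alpha_{\max} + q\alpha_{\min} + \Psi$, where $\Psi$ denotes the combined $\psi$-sum along $\pi_1$ and $\pi_2$. The concatenation $\pi_1 \cdot \pi_2$ is itself a periodic orbit at $i$ of length $2M$, so $\Psi \in [2M\alpha_{\min}, 2M\alpha_{\max}]$. The extra contribution $p(\alpha_{\max} - \alpha_{\min})$ in the numerator of $\psi^{|x|}(x)/|x|$ suffices to push this average strictly above $\alpha_{\min}$, and analogously the term $q(\alpha_{\max} - \alpha_{\min})$ forces $\psi^{|y|}(y)/|y| < \alpha_{\max}$ strictly.

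The main obstacle is the remaining inequality $\psi^{|x|}(x)/|x| < \psi^{|y|}(y)/|y|$, since $\Psi$ appears identically in both sums and we must verify that the asymmetric multiplicities genuinely separate the averages. A direct clearing of denominators shows that the sign of the difference coincides with the sign of $(\alpha_{\max} - \alpha_{\min})(3pq + 2pM) + (q - p)(\Psi - 2M\alpha_{\min})$. The first term is strictly positive, and a short case analysis on the sign of $q - p$, using $\Psi - 2M\alpha_{\min} \in [0, 2M(\alpha_{\max} - \alpha_{\min})]$, shows the entire expression is strictly positive, completing the proof.
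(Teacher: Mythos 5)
Your proof is correct, and it takes a genuinely different route from the paper's. The paper constructs each of $x,y$ by sandwiching $r = 2M$ repeats of a \emph{single} extremal simple cycle between the two connecting paths of length $M$: the separation of the two averages then falls out automatically because the common uncertainty contributed by the connecting paths (bounded by $2M(\alpha_{\max}-\alpha_{\min})$) is dominated by the $lr(\alpha_{\max}-\alpha_{\min})$ gap once $lr > 2M$; no cross-multiplication is needed. By contrast, you mix \emph{both} extremal cycles into each orbit with asymmetric multiplicities $(1,2)$ and $(2,1)$, and the separation is no longer free — you must clear denominators and do the case analysis on $\operatorname{sgn}(q-p)$ to see that the asymmetry in the $\bar x/\bar y$ counts overcomes the identical contribution $\Psi$ of the connecting paths. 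I checked that computation: the difference $\psi^{|y|}(y)D_1 - \psi^{|x|}(x)D_2$ does reduce to $(\alpha_{\max}-\alpha_{\min})(3pq+2pM) + (q-p)(\Psi - 2M\alpha_{\min})$, and your case analysis handles it correctly, using $0 \le \Psi - 2M\alpha_{\min} \le 2M(\alpha_{\max}-\alpha_{\min})$ (valid because $\pi_1\pi_2$ is a genuine periodic orbit of period $2M$). What each approach buys: your construction gives the tighter period bound $3k+2M$, whereas the paper's is $2M(1+k^2)$; and yours sidesteps a small gap in the paper's proof, namely that after replacing $\bar x,\bar y$ by powers they implicitly assume the common period $l$ satisfies $l>1$ (needed for $lr > 2M$ when $r=2M$), which fails when both simple extremal cycles are self-loops. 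The paper's approach, on the other hand, is computationally lighter because no cross-multiplication is required. Your observation that $\alpha_{\min}<\alpha_{\max}$ forces $k\ge 2$ is correct and is exactly what is needed to verify $3k+2M \le 2M(1+k^2)$.
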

\begin{proof}
     Consider the start state $i$ and find a path $p$ from $i$ to $j$ of length $M$ and a path $q$ from $j$ to $i$ of length $M$. By composing the path $p$ with $r$ (to be chosen later) repeats of a single cycle of $\overline{y}$ and $q$ we obtain a periodic orbit $y$ of length $2M + lr$. Further we have that
    \[
    \frac{\p^{|y|}(y)}{|y|} \le \frac{2M\alpha_{\max} + lr\alpha_{\min}}{2M + lr}
    \]
    and $y$ starts at $i$. Similarly we find $x$ starting at $j$ with $|x| = 2M + lr$ and 
    \[
    \frac{\p^{|x|}(x)}{|x|}\ge \frac{2M\alpha_{\min} + lr\alpha_{\max}}{2M + lr}.
    \]
    Now, as long as $r$ is chosen so that
    \[
    2M\alpha_{\min} + lr\alpha_{\max} > 2M\alpha_{\max} + lr\alpha_{\min}
    \]
    $x,y$ will satisfy the final inequality in the lemma. Note that this inequality is satisfied for $ r =2M$ in which case $x$ and $y$ have periods $2M+2Ml \le 2M(1+k^2)$ as required. 
\end{proof}

We also require the following.
\begin{lemma} \label{lem.unifint}
      Suppose that $(\Sigma,\sigma)$ is a mixing subshift of finite type and that $\p : \Sigma \to \R$ is a function that is constant on $2$ cylinders. Suppose that there exist periodic orbits $x,y$ both with period $l$ and same initial state such that $lA = \p^l(x) < \p^l(y) = lB$.
      Then there exists $\overline{C} >0$ such that for any $\eta \in (A, B)$ we can find an infinite sequence of periodic orbits $x_n \in \Sigma$ such that
    \[
    \left| \frac{\p^{|x_n|}(x_n)}{|x_n|} - \eta \right| \le \frac{\overline{C}}{|x_n|^2}.
    \]
    Furthermore we can take
    \[
    \overline{C} = \frac{(\alpha_{\max} - \alpha_{\min})l^2}{\sqrt{5}}.
    \]
\end{lemma}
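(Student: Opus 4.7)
The plan is to use the hypothesis that $x$ and $y$ have the same period $l$ and the same initial state to concatenate them freely, producing a two-parameter family of periodic orbits whose Birkhoff averages are explicit convex combinations of $A$ and $B$. Then Lemma \ref{lem.lc} supplies the required Diophantine approximation by such convex combinations.

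More precisely, since one full traversal of $x$ starts and ends at the common initial state (and similarly for $y$), for any non-negative integers $a,b$ not both zero the word consisting of $a$ copies of the length-$l$ cycle of $x$ followed by $b$ copies of the length-$l$ cycle of $y$ is an admissible cycle in $\calG_A$, hence corresponds to a periodic orbit $z_{a,b} \in \Sigma$ of period $n = (a+b)l$. Because $\psi$ is constant on $2$-cylinders, its Birkhoff sum over any cycle depends only on the consecutive pairs of letters traversed, so
\[
\psi^{n}(z_{a,b}) \;=\; a\,\psi^{l}(x) + b\,\psi^{l}(y) \;=\; l(aA + bB),
\]
and therefore
\[
\frac{\psi^{n}(z_{a,b})}{n} \;=\; \frac{aA+bB}{a+b}.
\]

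Now fix $\eta \in (A,B)$ and apply Lemma \ref{lem.lc} to the interval $(A,B) \subset (\alpha_{\min},\alpha_{\max})$: there are infinitely many integers $m \ge 1$ for which one can choose $0 \le a,b \le m$ with $a+b=m$ satisfying
\[
\left| \frac{aA+bB}{m} - \eta \right| \;\le\; \frac{B-A}{\sqrt{5}\,m^{2}} \;\le\; \frac{\alpha_{\max}-\alpha_{\min}}{\sqrt{5}\,m^{2}}.
\]
Setting $n = ml = |z_{a,b}|$ and substituting $m = n/l$ gives
\[
\left| \frac{\psi^{n}(z_{a,b})}{n} - \eta \right| \;\le\; \frac{(\alpha_{\max}-\alpha_{\min})\,l^{2}}{\sqrt{5}\,n^{2}},
\]
so the sequence $x_n := z_{a,b}$ (indexed along the infinitely many admissible $m$'s) does the job with $\overline{C} = (\alpha_{\max}-\alpha_{\min})l^{2}/\sqrt{5}$, as claimed.

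The only subtle point is the compatibility of concatenation, and this is taken care of by the shared initial state hypothesis together with $\psi$ being constant on $2$-cylinders, which ensures that the Birkhoff sum over a concatenation of cycles is exactly the sum of the Birkhoff sums of the pieces (with no boundary correction from the transitions between a copy of $x$ and the next copy of $x$ or $y$, since those transitions coincide with the ones already present inside a single cycle of $x$ or $y$). The rest is a direct application of Hurwitz's theorem packaged as Lemma \ref{lem.lc}, so no genuine obstacle remains.
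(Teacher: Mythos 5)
Your proof is correct and follows essentially the same route as the paper: apply Lemma \ref{lem.lc} (Hurwitz) to the interval $(A,B)$, then realize the convex combination $(aA+bB)/(a+b)$ as the Birkhoff average of a periodic orbit obtained by concatenating $a$ copies of the $x$-cycle with $b$ copies of the $y$-cycle, which is admissible because $x$ and $y$ share the same initial state, and the Birkhoff sums add with no boundary correction because the transition $2$-cylinders at the seams already occur inside the individual cycles. Your explicit verification of the additivity of the Birkhoff sums is a helpful elaboration of a step the paper treats as immediate, but there is no substantive difference in approach.
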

\begin{proof}
    By Lemma \ref{lem.lc} there exist infinitely many $n \ge 1$ such that the following holds. There are integers $n_1, n_2\geq 0$ with $n_1 + n_2 = n$ and such that
\[
\left| \frac{n_1 A + n_2 B}{n_1+n_2} - \eta  \right| \le \frac{B-A}{\sqrt{5} n^2} \le \frac{(\alpha_{\max} - \alpha_{\min}) l^2}{\sqrt{5} (l n)^2}.
\]
Since $x$ and $y$ have the same initial vertex, we can form a new periodic orbit by composing $n_1$ copies of $x$ followed by $n_2$ copies of $y$. This creates a periodic orbit $z$ of orbit length $nl$ with the property that
\[
\frac{\p^{nl}(z)}{nl} = \frac{n_1 A + n_2 B}{n_1+n_2} \ \text{ and so } \ \left| \frac{\p^{nl}(z)}{nl}  - \eta \right| \leq  \frac{(\alpha_{\max} - \alpha_{\min})l^2}{\sqrt{5} |z|^2}.
\]
Since we can run this construction for infinitely many $n$, the result follows.
\end{proof}

To obtain uniformity over $\eta$, that is, to show the existence of $C$ in Theorem \ref{thm.ldtsi}, we need to upgrade Lemma \ref{lem.unifint} using Lemma \ref{lem.construct}.

\begin{proposition} \label{prop.lciuniform}
        Suppose that $(\Sigma,\sigma)$ is a mixing subshift of finite type and that $\p : \Sigma \to \R$ is a function that is constant on $2$ cylinders. Then there exists $C >0$ such that for any $\eta \in (\alpha_{\min}, \alpha_{\max})$ there exists an infinite sequence of periodic orbits $x_n \in \Sigma$ such that
    \[
    \left| \frac{\p^{|x_n|}(x_n)}{|x_n|}  - \eta \right| \le \frac{C}{|x_n|^2}.
    \]
    Furthermore we can take
    \[
    C = \frac{4M^2 (1+k^2)^2 (\alpha_{\max} - \alpha_{\min})}{\sqrt{5}}.
    \]
\end{proposition}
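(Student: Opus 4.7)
The plan is to combine Lemmas \ref{lem.construct} and \ref{lem.unifint}. First I apply Lemma \ref{lem.construct} to produce a pair of periodic orbits of common period $l\le 2M(1+k^2)$, with averages $A,B$ satisfying $\alpha_{\min}<A<B<\alpha_{\max}$. These two orbits initially start at the distinct states $i,j$ (the initial states of the simple cycles $\bar{x},\bar{y}$ realizing $\alpha_{\max},\alpha_{\min}$), but by the very construction in Lemma \ref{lem.construct}, each orbit passes through both $i$ and $j$ via the connector paths of length $M$. A cyclic shift therefore allows us to assume that both orbits share a common initial state, and this operation preserves both the period and the Birkhoff average.

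Having a pair with common period $l$ and common initial state, I invoke Lemma \ref{lem.unifint}: for every $\eta\in(A,B)$ it produces an infinite sequence of periodic orbits $x_n$ with
\[
\left|\frac{\psi^{|x_n|}(x_n)}{|x_n|}-\eta\right|\le \frac{(\alpha_{\max}-\alpha_{\min})\, l^2}{\sqrt{5}\,|x_n|^2}.
\]
Crucially, the constant in Lemma \ref{lem.unifint} depends only on $l$ and on the global range $\alpha_{\max}-\alpha_{\min}$, not on $A,B$ themselves. Since $l\le 2M(1+k^2)$ independently of $\eta$, we obtain $l^2\le 4M^2(1+k^2)^2$, and the right-hand side is bounded by $C/|x_n|^2$ with the claimed constant $C=4M^2(1+k^2)^2(\alpha_{\max}-\alpha_{\min})/\sqrt{5}$.

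For the remaining values $\eta\in(\alpha_{\min},\alpha_{\max})\setminus(A,B)$, I rerun the construction of Lemma \ref{lem.construct} with alternative choices of the connector paths $p,q$ of length $M$ between $i$ and $j$, and if necessary with a different repetition count (so long as the period remains at most $2M(1+k^2)$). By varying these choices, the "window" $(A,B)$ of achievable averages shifts, and one can cover any $\eta$ near the endpoints by the window of an appropriately chosen pair; Lemma \ref{lem.unifint} then delivers the same uniform bound $C/|x_n|^2$. The main obstacle is the combinatorial step of checking that the mixing of $\Sigma$ provides enough flexibility in the available length-$M$ paths to translate the window so that every $\eta\in(\alpha_{\min},\alpha_{\max})$ is reached while the period bound $2M(1+k^2)$ is preserved; once this flexibility is verified, the uniformity of $C$ is immediate from the fact that the estimate from Lemma \ref{lem.unifint} only sees the period and the global range.
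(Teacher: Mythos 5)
Your proposal identifies the right ingredients (Lemmas~\ref{lem.construct} and~\ref{lem.unifint}) but pairs the orbits the wrong way, and the resulting endpoint gap is not closed. After Lemma~\ref{lem.construct} you have $x$ starting at $i$ with a \emph{low} Birkhoff average $A$ and $y$ starting at $j$ with a \emph{high} average $B$, so $\alpha_{\min}<A<B<\alpha_{\max}$. Cyclically shifting $x$ and $y$ to a common start and feeding the pair $(x,y)$ into Lemma~\ref{lem.unifint} only covers the middle interval $(A,B)$, and it is exactly the two end pieces $(\alpha_{\min},A]$ and $[B,\alpha_{\max})$ that are the delicate part of the statement. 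Your plan for those pieces (``rerun the construction with alternative choices of connector paths so the window shifts'') is not carried out and, as you concede, is the real obstacle; there is no reason a priori that varying the length-$M$ connectors lets $(A,B)$ slide all the way to the endpoints while the period stays $\le 2M(1+k^2)$, and in fact $A,B$ can never reach $\alpha_{\min},\alpha_{\max}$ since every such $x,y$ spends a fixed positive fraction of its period on the connectors.

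The point you miss is that Lemma~\ref{lem.construct} is engineered so that $x$ shares its initial state $i$ with $\bar x$ (the orbit realizing $\alpha_{\max}$) and $y$ shares $j$ with $\bar y$ (realizing $\alpha_{\min}$). One should therefore apply Lemma~\ref{lem.unifint} to the pair $(x,\bar x)$, giving coverage of $I_2=(A,\alpha_{\max})$, and to $(y,\bar y)$, giving coverage of $I_1=(\alpha_{\min},B)$; since $A<B$ these overlap and their union is all of $(\alpha_{\min},\alpha_{\max})$, with the constant controlled by the bound $|x|,|y|\le 2M(1+k^2)$ from Lemma~\ref{lem.construct}. No cyclic shift is needed, and the endpoints are reached because $\alpha_{\min}$ and $\alpha_{\max}$ themselves are the outer endpoints of $I_1$ and $I_2$. (One still has to reconcile the periods of $x$ and $\bar x$ --- Lemma~\ref{lem.unifint} as stated asks for equal periods --- but that is an issue internal to the paper's write-up, not a repair of your gap.) As written, your proposal does not prove the proposition.
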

\begin{proof}
    We can assume that $\p$ is not cohomologous to a constant function (otherwise the conclusion is clear), so that $\al_{\min}<\al_{\max}$.

            We split the interval $(\alpha_{\min}, \alpha_{\max}) = I_1 \cup I_2$ into the two (non-disjoint) intervals
            \[
            I_1 = \left(\alpha_{\min}, \frac{\p^{|y|}(y)}{|y|}\right) \ \text{ and } \ I_2 = \left(\frac{\p^{|x|}(x)}{|x|}, \alpha_{\max}\right)
            \]
            where $x,y$ are the orbits constructed in Lemma \ref{lem.construct}. In particular, both $|x|$ and $|y|$ are bounded above by $2M(1+k^2)$. We can now apply Lemma \ref{lem.unifint} to both of the intervals $I_1$ and $I_2$ to deduce the result.
\end{proof}                 

\begin{definition}
Let $\p,\Sigma$ be as above. Suppose  $w \in (\alpha_{\min}, \alpha_{\max})$ is chosen so that there exists $x$ with $\sigma^n(x) = x$ and $\p^n(x) = nw$. We define $d(\p,w)$ to be the greatest common divisor of all numbers $n \ge 1$ such that
    \[
\#\left\{ x\in\Sigma_{\mathcal{C}} : \sigma^n(x) = x, \p^n(x) = wn \right\} > 0.
\]
If $\#\left\{ x\in\Sigma_{\mathcal{C}} : \sigma^n(x) = x, \p^n(x) = wn \right\} =0$ for all $n$ then we set $d(\p,w) = 0$.
\end{definition}
Note that $d(\p, w) = 0$ for all but countably many values of $w.$ This is because the values for which $d(\p, w) > 0$ are contained in the rational span of the values that the averaged Birkhoff sum of $\p$ attains on simple cycles.
 By a result of Marcus and Tuncel \cite[Thm.~14]{marcus.tuncel} for any $\xi>0$ and any closed subset $W \subset ( \alpha_{\min}, \alpha_{\max}) $
there exist $r, N \in \Z_{\ge 0}$ and $\delta > 0$ such that, for any $n \ge N$ with $d(\p,w)|n$,
\begin{equation}\label{eq.mt}
    \#\left\{ x\in\Sigma : \sigma^n(x) = x, \p^n(x) = wn \right\} \ge \delta n^{-r} e^{n(h-(\calL(\p, w)) - \xi)}
\end{equation}
for each $w\in W$ satisfying $d(\psi,w)>0$. 
To make use of this result we need to control the values of $d(\p,w)$ as $w$ takes values in a shrinking interval.

\begin{lemma}
      There exists $C >0$ such that for any $\eta \in (\alpha_{\min}, \alpha_{\max})$ we can find a sequence $w_n \in (\alpha_{\min}, \alpha_{\max})$ and a sequence $x_n \in \Sigma$ of periodic orbits such that
    \[
    w_n \in \left[ \eta - \frac{C}{|x_n|^2}, \eta +\frac{C}{|x_n|^2} \right] \ \text{ with } \ \frac{\p^{|x_n|}(x_n)}{|x_n|} = w_n
    \]
    and $d(\p,w_n)||x_n|$.
\end{lemma}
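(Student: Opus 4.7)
The plan is to extract the lemma from Proposition \ref{prop.lciuniform}, using the observation that if one chooses $w_n$ to be the averaged Birkhoff sum of the approximating periodic orbit itself, then the divisibility condition $d(\p,w_n) \mid |x_n|$ becomes tautological from the definition of $d(\p,\cdot)$.

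First I would take $C$ to be the constant from Proposition \ref{prop.lciuniform}, namely $C = 4M^2(1+k^2)^2(\al_{\max}-\al_{\min})/\sqrt{5}$. For a given $\eta \in (\al_{\min}, \al_{\max})$, that proposition produces an infinite sequence of periodic orbits $(x_n)_{n\geq 1}$ satisfying $|\p^{|x_n|}(x_n)/|x_n| - \eta| \leq C/|x_n|^2$. I would then define $w_n := \p^{|x_n|}(x_n)/|x_n|$, so that the interval condition $w_n \in [\eta - C/|x_n|^2,\, \eta + C/|x_n|^2]$ together with $\p^{|x_n|}(x_n)/|x_n| = w_n$ both hold by construction. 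The condition $w_n \in (\al_{\min},\al_{\max})$ follows by a glance at the proof of Proposition \ref{prop.lciuniform}: the orbits $x_n$ are built by concatenating copies of the auxiliary orbits $x,y$ supplied by Lemma \ref{lem.construct}, whose averages $\p^{|x|}(x)/|x|$ and $\p^{|y|}(y)/|y|$ lie strictly between $\al_{\min}$ and $\al_{\max}$, so each $w_n$ is a convex combination of two interior values and hence interior itself.

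It remains only to read off the divisibility. By definition, $d(\p, w_n)$ is the greatest common divisor of the set of integers $m \geq 1$ admitting a periodic point $y \in \Sigma$ with $\sigma^m(y) = y$ and $\p^m(y) = m w_n$. Since $x_n$ is itself such a point with $m = |x_n|$, the integer $|x_n|$ belongs to this set, and hence $d(\p,w_n) \mid |x_n|$. The upshot is that no new construction is needed beyond Proposition \ref{prop.lciuniform}; the apparent additional divisibility requirement is automatic once $w_n$ is defined to be the averaged Birkhoff sum of the orbit realizing the approximation, so there is no real obstacle to overcome. The only thing worth double-checking is that the construction of $x_n$ keeps $w_n$ strictly inside $(\al_{\min}, \al_{\max})$, which as noted above is built into the proof of Lemma \ref{lem.construct}.
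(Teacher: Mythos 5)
Your proof is correct and follows essentially the same route as the paper's: take $C$ and the orbits $x_n$ from Proposition \ref{prop.lciuniform}, set $w_n$ to be the averaged Birkhoff sum of $x_n$, and observe that $x_n$ itself witnesses $|x_n|$ as an element of the set whose greatest common divisor defines $d(\p,w_n)$, so the divisibility is automatic. Your explicit check that each $w_n$ stays strictly inside $(\al_{\min},\al_{\max})$ (by tracing through the construction in Lemmas \ref{lem.construct} and \ref{lem.unifint}) is a small point the paper leaves implicit, but it is correct and worth spelling out.
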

\begin{proof}
    By Lemma \ref{prop.lciuniform} there exist $C >0, N\ge 1$ depending only on $\p, \Sigma$ such that for any $\eta \in (\alpha_{\min}, \alpha_{\max})$ and $n \ge N$ there is a sequence of periodic orbits $x_n \in \Sigma$ with periods $|x_n|$ (i.e. $\sigma^{|x_n|}(x_n) = x_n$) such that
    \[
    \left| \frac{\p^{|x_n|}(x_n)}{|x_n|} - \eta \right| \le \frac{C}{|x_n|^2}.
    \]
    We define $w_n = \frac{\p^n(x_n)}{n}$
    and note that  $d(\p, w_n)||x_n|$ if and only if
    \[
    \#\left\{ x \in \Sigma: \sigma^{|x_n|}(x) = x, \frac{\p^{|x_n|}(x_n)}{|x_n|} = w_n \right\} > 0.
    \]
    Hence we are done.
\end{proof}

For a sequence $(\d_n)_{n\geq 1}$ of positive numbers and $n \ge 1$ we define
\[
F_n(\eta, \d_n) = \left\{x\in \Sig : \sigma^n(x) = x, \left|\frac{\p^n(x)}{n} - \eta\right| < \d_n \right\}
\]
for $\epsilon >0, n\ge 1$ and $\eta \in (\alpha_{\min}, \alpha_{\max})$. 

\begin{proposition}\label{thm.main}
Suppose that $(\Sigma, \sigma)$ is a mixing subshift of finite type and that $\p: \Sigma \to \R$ is a function that is constant on $2$ cylinders. Then there exists $C>0$ such that for any  $\eta \in (\alpha_{\min}, \alpha_{\max})$ and $ \xi > 0$ there exist $\d, r, M, \epsilon >0$ and a sequence of integers $n_l$ with $n_l \to \infty$ as $l \to \infty$ such that
    \[
    \#F_{n_l}(\eta, C n_l^{-2}) \ge \d n_l^{-r} e^{n_l ((h-\calL(\p,\eta)) -\xi) }
    \]
    for all $l \ge 1.$  
\end{proposition}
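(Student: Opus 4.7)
The plan is to combine the explicit rational approximation by periodic-orbit averages provided by the previous lemma (ultimately coming from Proposition \ref{prop.lciuniform}) with the uniform Marcus--Tuncel lower bound \eqref{eq.mt} on the number of periodic orbits with prescribed Birkhoff average. Fix $\eta \in (\alpha_{\min}, \alpha_{\max})$ and $\xi>0$, and let $C>0$ be the constant from Proposition \ref{prop.lciuniform}. The preceding lemma produces a sequence of periodic orbits $x_n \in \Sigma$ with $|x_n| \to \infty$ and associated averages $w_n := \p^{|x_n|}(x_n)/|x_n|$ satisfying
\[
|w_n - \eta| \le \frac{C}{|x_n|^2}, \qquad d(\p, w_n) \mid |x_n|, \qquad \text{and} \qquad d(\p, w_n)>0,
\]
where the last property holds because $w_n$ is realised as a Birkhoff average along the periodic orbit $x_n$.

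Next, choose a closed subinterval $W \subset (\alpha_{\min}, \alpha_{\max})$ whose interior contains $\eta$; since $w_n \to \eta$, we have $w_n \in W$ eventually. Applying \eqref{eq.mt} to the closed set $W$ with $\xi/2$ in place of $\xi$ produces constants $\delta > 0$ and $r, N \in \Z_{\ge 0}$ such that, for all sufficiently large $n$,
\[
\#\{y \in \Sigma : \sigma^{|x_n|}(y) = y,\ \p^{|x_n|}(y) = w_n|x_n|\} \ \ge\ \delta\, |x_n|^{-r}\, e^{|x_n|(h - \calL(\p, w_n) - \xi/2)}.
\]
Every $y$ counted on the left satisfies $\p^{|x_n|}(y)/|x_n| = w_n$, and the bound on $|w_n - \eta|$ then forces $y \in F_{|x_n|}(\eta, C|x_n|^{-2})$.

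To conclude, use that $\calL(\p, \cdot)$ is continuous on $(\alpha_{\min}, \alpha_{\max})$ (it is real analytic there): since $w_n \to \eta$, we have $\calL(\p, w_n) \le \calL(\p, \eta) + \xi/2$ for all large enough $n$. Combining with the previous display gives
\[
\#F_{|x_n|}(\eta, C|x_n|^{-2}) \ \ge\ \delta\, |x_n|^{-r}\, e^{|x_n|((h - \calL(\p, \eta)) - \xi)}
\]
for all sufficiently large $n$. Setting $n_l := |x_l|$ (passing to a subsequence so that the $n_l$ are strictly increasing integers) yields the proposition.

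The main obstacle is not in the final chain of inequalities, which is bookkeeping, but in arranging the two ingredients so that they can interact. The Marcus--Tuncel estimate is only useful along sequences $n$ with $d(\p,w_n)\mid n$, and its constants $\delta, r$ are only uniform over a \emph{fixed} compact set of target averages; meanwhile, the averages $w_n$ that we can realise shift with $n$. The preceding lemma is precisely what bridges these two requirements: it produces $w_n$ converging to $\eta$ at rate $C/|x_n|^2$, with periods compatible with $d(\p,w_n)$, which is exactly the input needed for \eqref{eq.mt} to apply uniformly along the subsequence, with the dual use of the $\xi/2$ tolerance absorbing both the entropy error from \eqref{eq.mt} and the continuity correction $\calL(\p, w_n) - \calL(\p, \eta)$.
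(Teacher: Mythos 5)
Your proof is correct and takes essentially the same route as the paper: obtain periodic orbits $x_n$ from the Hurwitz-type approximation result, invoke the Marcus--Tuncel lower bound \eqref{eq.mt} at the realised averages $w_n$, and absorb the discrepancy $|\calL(\p,\eta)-\calL(\p,w_n)|$ into $\xi$ using smoothness of $\calL$. If anything, you are slightly more careful than the paper's own write-up: you explicitly invoke the divisibility $d(\p,w_n)\mid|x_n|$ (required for \eqref{eq.mt} to apply, and supplied by the unlabeled lemma preceding the proposition, whereas the paper only cites Proposition \ref{prop.lciuniform}), you fix a compact $W\ni\eta$ to make the uniformity of \eqref{eq.mt} explicit, and you split $\xi$ cleanly. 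No gap.
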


\begin{remark}
For $\eta$ with $d(\p, \eta) > 0$ this result follows immediately from estimates due to Marcus and Tuncel \cite{marcus.tuncel}. The main strength of the above estimates are that they hold for all values of $\eta \in (\alpha_{\min}, \alpha_{\max})$.
\end{remark}

\begin{proof}[Proof of Proposition \ref{thm.main}]
    We use Lemma \ref{prop.lciuniform} to find a sequence of periodic orbits $(x_n)_n$ such that 
\[
|w_n - \eta| \le \frac{C}{|x_n|^2} \ \text{ where } \ w_n = \frac{\p^{|x_n|}(x_n)}{|x_n|}
\]
for each $n \ge 1$. By \eqref{eq.mt}, for any $\xi >0$ and for all $n$ sufficiently large we have that
\[
 \#\left\{ x \in \Sigma: \sigma^{|x_n|}(x) = x \text{ and } \left| \frac{\p^{|x_n|}(x_n)}{|x_n|} - \eta \right| < \frac{C}{|x_n|^2} \right\} \ge \delta |x_n|^{-r} e^{|x_n|((h-\calL(\p,w_n)) - \xi)}
\]
for some $\delta, r >0$.  By analyticity of $\calL$ there is $\widetilde{C} >0$ such that
\[
|\calL(\p,\eta) - \calL(\p,w_n)| \le \widetilde{C}|\eta - w_n| = O(|x_n|^{-1})
\]
(where the implied error constant is independent of $n$) and this implies our estimate. This concludes the proof.
\end{proof}

\begin{proof}[Proof of Theorem \ref{thm.ldtsi}]
Note that the Large Deviation Principle implies that
\[
\limsup_{n\to\infty} \frac{1}{n} \log \left(\#\left\{ x\in\Sigma_{\mathcal{C}} : \sigma^n(x) = x, \left| \frac{\p^n(x)}{n} -  \eta\right| < \frac{C'}{n^2} \right\} \right)\le h-\calL(\p, \eta)
\]
for any $C' >0$ and $\eta \in (\alpha_{\min}, \alpha_{\max})$.
Proposition \ref{thm.main} also implies that for $C > 0$ (as in the proposition)  
\begin{equation}\label{eq.egr}
    \limsup_{n\to\infty} \frac{1}{n} \log \left(\#\left\{ x\in\Sigma_{\mathcal{C}} : \sigma^n(x) = x, \left| \frac{\p^n(x)}{n} -  \eta\right| < \frac{C}{n^2} \right\} \right)\ge h-\calL(\p, \eta).
\end{equation}
This proves part of the theorem. The other part follows similarly, and the estimate for $C$ follows from Proposition \ref{prop.lciuniform}.

We now finish by proving the final statements. Without loss of generality we can, by Remark \ref{rem.lattice}, assume that $\p$ takes values in the integers. When this is the case, the set $
\left\{ \frac{\p^n(x)}{n} : \sigma^n(x) = x \right\}$
contains rational numbers all with denominator at most $n$. By \cite{hurwitz} there is $\eta \in (\alpha_{\min}, \alpha_{\max})$ such that if $\epsilon > 0$ is sufficiently small then for all but finitely many values of $n$,
\[
\left|\frac{\p^n(x)}{n} - \eta \right| > \frac{\epsilon}{n^2} \ \text{ for all $x\in \Sigma$ with $\sigma^n(x) = x$.}
\]
Hence $F_n(\eta, \epsilon n^{-2})$ is empty for all $n$ sufficiently large. 
\end{proof}

\section{Large deviation for pairs of word metrics}\label{sec.ldwordmetric}

In this section we prove Theorem \ref{thm.wm}. The proof uses the Cannon automatic structure for hyperbolic groups. To avoid having to define all of these objects here, we refer the reader to \cite{cantrell.new}. Throughout the proof, we will follow same terminology and notation as that used in \cite{cantrell.new}.

\begin{proof}[Proof of Theorem \ref{thm.wm}]
    Without loss of generality assume that $d_S, d_{S_{\ast}}$ are not roughly similar (i.e. there does not exist $\tau, C >0$ such that $|d_S(g,h) - \tau d_{S_\ast}(g,h)| < C$ for all $g,h \in \G$). By Lemma 3.8 and Example 3.9 in \cite{calegari-fujiwara} we can find a Cannon coding for $(\G, S)$ with corresponding shift space $(\Sigma,\sigma)$ and a H\"older continuous function $\psi: \Sigma \to \R$ satisfying the following: if $z = (z_0, z_1,\ldots, z_{n-1},z_1, \ldots)\in \Sigma$ is a periodic orbit of period $n$, then the $n$th Birkhoff sum of $\psi$ on $z$ outputs the value $\ell_{S_\ast}[g]$, where $g\in \G$ is the group element obtained by multiplying the labelings in the finite path $z_0,z_1, \ldots, z_{n-1}$. By construction, $\psi$ is constant on $2$ cylinders and takes values in $\Z$. 

    Fix a maximal component $\Sigma_\mathcal{C}$ in $\Sigma$.
    We know that the function $-\psi$ satisfies a large deviation principle over the periodic orbits on $\Sigma_\mathcal{C}$. Furthermore, the rate function $\calI$ is the Legendre transform of the Manhattan curve for $d_S, d_{S_\ast}$ by \cite[Lem.~3.3]{cantrell.new}. In particular,
      \[
    \al_{\min}=\inf_{\sigma^n(x) = x} \frac{\psi^n(x)}{n} = \Dil(S,S_\ast)^{-1}  \ \text{ and } \ \al_{\max}=\sup_{\sigma^n(x) = x} \frac{\psi^n(x)}{n} =\Dil(S_\ast,S) 
    \]
    where the infimum/supremum is taken over all periodic orbits in $\Sigma_\mathcal{C}$.
    Now, by Theorem \ref{thm.ldtsi} we obtain the same limits with shrinking intervals, i.e.  $\eta \in (\alpha_{\min}, \alpha_{\max})$
\[
  \limsup_{n\to\infty} \frac{1}{n} \log\left( \#\left\{ x \in \Sigma: \sigma^n(x) = x \text{ and } \left| \frac{\psi^n(x)}{n} - \eta \right| \le \frac{C}{n} \right\} \right) = \calI(\eta) 
\]
for some $C$ independent of $\eta$.
It is possible that the system $(\Sigma_{\mathcal{C}}, \sigma)$ is not mixing but is instead transitive. This is no issue as explained in Remark \ref{rem.transitive}.
We know that each periodic orbit has a corresponding conjugacy class, and \cite[Lem.~4.2]{cantrell.reyes.2} says that the periodic orbits overcount the number of conjugacy classes by at most a linear factor in $|x|_S$. Hence we must have 
  \[
    \limsup_{n\to\infty} \frac{1}{n} \log \#\left\{ [g] \in \conj: \ell_S[g] = n, | \ell_{S_\ast}[g]  - \eta \ell_{S}[g] | < \epsilon \right\} = \calI(\eta) \le v_S
    \]
  for any $\epsilon > 0$ and $\eta \in (\alpha_{\min}, \alpha_{\max})$. By \cite[Thm.~1.1]{cantrell.tanaka.1} we have that $\calI$ is maximised (and equals $v_S$) when 
  \[
    \eta = \lim_{T\to\infty} \frac{1}{\#\{[g]\in \conj \colon \ell_S[g]<T\}} \sum_{\ell_S[g] < T} \frac{\ell_{S_\ast}[g]}{T}.
    \]
  This concludes the proof.
\end{proof}

As a corollary we deduce Corollary \ref{coro.approx}.
\begin{proof}[Proof of Corollary \ref{coro.approx}]
    The first part of the theorem, when $\eta$ is irrational, follows as a direct corollary of Theorem \ref{thm.wm}. To deduce the final statement when $\eta$ is rational it suffices to show that, when $(\Sigma, \sigma)$ is a mixing subshift of finite type and $\p: \Sigma \to \Z$ is a function that is constant of $2$ cylinders, then the following holds:
 if $p/q \in (\alpha_{\min}, \alpha_{\max}) \cap \Q$ then there exists a periodic orbit $x \in \Sigma$ such that $\p^{|x|}(x) = |x|p/q.$ It is an easy exercise to verify this and so we leave it to the reader.
\end{proof}


\begin{example}\label{sec.example}
Let $F_2 = \langle a, b \rangle$ be the free group on two generators and consider the generating sets
\[
S = \{a,b,a^{-1}, b^{-1}\} \ \text{ and } \ S_\ast = \{a,b,ab,a^{-1}, b^{-1}, (ab)^{-1} \}.
\]
The corresponding word metrics have exponential growth rates $v_{S_\ast}= \log(4)$ and $v_S = \log(3)$.
The Manhattan curve $\theta_{S/S_\ast}$ was computed in \cite{cantrell.tanaka.1} and is given by
\[
\theta_{S/S_\ast}(t) = \log\left( \frac{1}{2} e^{-t} \left(e^{-t} + \sqrt{e^{-t}(e^{-t}+8)} + 4\right)\right).
\]
From the definition we see that $\calI(v_{S_\ast}/v_S) = \log(4) - \Lambda/\log(3)$ where $\Lambda$ is the constant
\[
\Lambda = \sup_{t\in\R} \left\{\log(4) - \frac{\log(4)}{\log(3)} t - \theta_{S/S_\ast}(t) \right\}.
\]
This can be computed (using, say Wolfram Alpha) to be
\small
\[
 \log(16) \log\left(\log\left(\frac{4}{3}\right)\right) + \log(9) \log\left(\frac{\log\left(\frac{3}{2}\right)}{\log(\frac{4}{3})}\right) + \log(4) \left(\log(2) - \log\left(\log\left(\frac{3}{2}\right) \log(2)\right)\right).
\]
\normalsize

Theorem \ref{thm.wm} then implies that there is $C >0$ such that
\begin{align*}
     \limsup_{T\to\infty} \frac{1}{T} \log \left(\#\left\{ [g] \in \conj: \ell_{S_\ast}[g] < T, |v_S\ell_S[g] - v_{S_\ast}\ell_{S_\ast}[g]| \le \frac{C}{T}  \right\}\right) &= \log(4) - \frac{\Lambda}{\log(3)} \\
     &\approx 1.3679878759
\end{align*}
and similarly we also have
\begin{align*}
  \limsup_{T\to\infty} \frac{1}{T} \log \left(\#\left\{ [g] \in \conj: \ell_{S}[g] < T, |v_S\ell_S[g] - v_{S_\ast}\ell_{S_\ast}[g]| \le \frac{C}{T}  \right\}\right)  &= \log(3) - \frac{\Lambda}{\log(4)}\\
  &\approx 1.0841047424.
\end{align*}
In this case we can set $k = 6, M=2, \alpha_{\max} = 2, \alpha_{\min} =1$ by \cite{cantrell.tanaka.1}. 
Also, it was computed in \cite{cantrell.tanaka.1} that $ -\theta'_{S/S_\ast}(0) = 4/3$. 
We therefore have that
\[
\frac{4M^2(1+k^2)^2(\alpha_{\max} - \alpha_{\min}) }{\sqrt{5}} = \frac{4(2^2)(1+6^2)(2-1)}{\sqrt{5}} = \frac{592}{\sqrt{5}} \approx 264.7 \le 300
\]
and
\[
\limsup_{T\to\infty} \frac{1}{T} \log \left(\#\left\{ [g] \in \conj: \ell_{S_\ast}[g] < T, |3\ell_S[g] - 4 \ell_{S_\ast}[g]| \le \frac{300}{T}  \right\}\right) = v_{S_\ast}.
\]
Therefore the length spectra of $3d_S$ and $4d_{S_\ast}$ are within distance $300$ on a set of full exponential growth rate for $d_{S_\ast}$. 
In particular, for any $\eta \in (1,2)$ we can find an infinite sequence of conjugacy classes $[g_n] \in \conj(\G)$ such that
\[
\left| \frac{\ell_{S}[g_n]}{\ell_{S_\ast}[g_n]} - \eta\right| \le \frac{300}{|g_n|_{S_\ast}^2}.
\]
\end{example}


\section{Encoding cubulations via finite-state automata}\label{sec.cubulation}

In this section we study further the class $\frakG$ defined in the introduction. We prove Proposition \ref{prop.groupsfornicecubulation}, which provides plenty of examples of groups in this class. Then we construct a finite-state automaton that encodes a pair of cubulations of a group in $\frakG$, our main result being Proposition \ref{prop.languageL_X}. This will be done in the greater generality of the class $\frakX$ of pairs of compatible cubulations. Proposition \ref{prop.languageL_X} is key to prove our main results in Section \ref{sec.thmscub}.


\subsection{The classes $\frakG$ and $\frakX$}
Throughout this and the next section we will work with the following notion of compatibility of pairs of group actions on $\CAT(0)$ cube complexes.

\begin{definition}\label{def.classX}
    Let $\frakX$ be the class of triples $(\G,\calX,\calX_\ast)$, where $\G$ is a non-virtually cyclic group acting cocompactly on the $\CAT(0)$ cube complexes $\calX,\calX_\ast$ and satisfying:
\begin{enumerate}
    \item the action on $\calX$ is proper and virtually co-special;
    \item every hyperplane stabilizer for the action on $\calX_\ast$ is convex-cocompact for the action on $\calX$; and,
    \item the action of $\G$ on $\calX$ has a contracting element.
\end{enumerate}
\end{definition}
Note that in the definition above we do not require the action on $\calX_\ast$ to be proper. 

Since hyperplane stabilizers are always convex-cocompact and being a contracting element does not depend on the cubulation \cite[Lem.~4.6]{genevois}, it follows that $(\G,\calX,\calX_\ast)\in \frakX$ whenever $\G\in \mathfrak{G}$ and $\calX,\calX_\ast$ are cubulations of $\G$. 

The main result of this section is the following proposition, which tells us that the class $\frakG$ is much bigger than the class of cubulable hyperbolic groups.

\begin{proposition}\label{prop.groupsfornicecubulation}
The following classes of groups are contained in $\mathfrak{G}$.
\begin{itemize}
    \item[i)] Hyperbolic cubulable groups that are non-elementary.
    \item[ii)] Right-angled Artin groups of the form $\G=A_G$, where $G$ is a finite graph with more than one vertex, that is not a join and such that $\st(v)$ is not contained in $\st(w)$ for every pair of distinct vertices $v,w$ of $G$. 
    \item[iii)] Right-angled Coxeter groups that are not virtually cyclic or direct products and are of the form $\G=W_G$, where $G$ is finite and does not have any \emph{loose squares}.
\end{itemize}
Moreover, if $\G$ is cubulable and hyperbolic relative to groups belonging to $\mathfrak{G}$, then $\G$ belongs to $\mathfrak{G}$.
\end{proposition}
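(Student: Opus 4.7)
The plan is to verify conditions (1), (2), (3) of Definition \ref{def.frakG} for each of the three explicit classes, and then handle the closure statement under relative hyperbolicity. Non-virtual-cyclicity is immediate in every case from the stated hypotheses, so the content is in (1)--(3). I would carry out the verification in the order: (1) virtual co-specialness from a canonical model, (3) a contracting element for the canonical model, and (2) the rigidity of the class of convex-cocompact subgroups across all cubulations.

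For a non-elementary hyperbolic cubulable $\G$, condition (1) is Agol's theorem \cite{agol}. Condition (2) follows because \cite[Prop.~7.2]{haglund-wise.special} identifies convex-cocompact subgroups of any cocompact cubulation with quasiconvex subgroups of $\G$, and quasiconvexity is an intrinsic property of the hyperbolic group. For (3), non-elementarity provides loxodromic elements on any cocompact cubulation, hyperbolicity forces their axes to be contracting, and by \cite[Lem.~4.6]{genevois} this is independent of the chosen cubulation. For the RAAG case $\G = A_G$, the standard action on the universal cover of the Salvetti complex is proper, cocompact and co-special, giving (1); the graph hypotheses (not a join, no star-inclusion between distinct vertices) are exactly those under which $A_G$ is irreducible as a cubulated group and contains a Morse/rank-one element for the Salvetti action, giving (3). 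For (2) I would appeal to the rigidity of convex-cocompact subgroups across cubulations of $A_G$, in the spirit of the work of Beyrer--Fioravanti and Fioravanti: under these irreducibility hypotheses, every cocompact cubulation of $A_G$ has the same class of convex-cocompact subgroups as the Salvetti action. The RACG case is entirely parallel, with the Davis complex in place of the Salvetti, and the \emph{no loose squares} condition playing the analogous role both in producing a rank-one element for (3) and in providing the cubical rigidity needed for (2).

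For the moreover statement, suppose $\G$ is cubulable and hyperbolic relative to peripheral subgroups $P_1,\dots,P_n \in \frakG$. Virtual co-specialness (1) follows from relative versions of the Agol--Wise specialness theorem for cubulable relatively hyperbolic groups whose peripherals are virtually special. Condition (3) holds because such a $\G$ admits a loxodromic element on its Cayley graph not conjugate into any $P_i$, which acts as a rank-one, hence contracting, isometry on any cocompact cubulation, again invoking \cite[Lem.~4.6]{genevois} for cubulation-independence. For (2), the key tool is Proposition \ref{prop.appcriterioncvxcc} from the appendix: it characterizes convex-cocompactness of a subgroup $H < \G$ in any cocompact cubulation intrinsically, as relative quasiconvexity of $H$ in $\G$ together with convex-cocompactness of each infinite intersection $H \cap P_i^g$ inside $P_i^g$; since each $P_i \in \frakG$, the latter condition is itself cubulation-invariant, so the whole criterion depends only on $\G$ and its peripheral structure. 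The main obstacle throughout is condition (2) in the RAAG and RACG cases: unlike for hyperbolic groups, convex-cocompactness is no longer captured by coarse geometry on $\G$ alone, and genuine cubical rigidity input is required; the precise combinatorial hypotheses on the defining graph are exactly those that make such rigidity available.
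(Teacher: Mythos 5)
Your proposal follows the same overall strategy as the paper: for case (i) it uses Agol's theorem, the identification of convex-cocompact subgroups with quasiconvex subgroups from \cite{haglund-wise.special}, and the fact that loxodromics in a hyperbolic group are contracting; for (ii) and (iii) it invokes cubical rigidity of convex-cocompact subgroups plus the existence of a rank-one element under the stated graph conditions; and for the moreover statement it uses Proposition \ref{prop.appcriterioncvxcc} for condition (2), a relative specialness theorem for condition (1), and peripheral contracting elements for condition (3).

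Two places, however, are glossed over. First, for the RAAG and RACG cases your appeal to ``rigidity in the spirit of Beyrer--Fioravanti and Fioravanti'' is not quite the right pointer: the precise input the paper uses is the Fioravanti--Levcovitz--Sageev theorem that these groups have a \emph{unique cubical coarse median structure} (\cite[Thm.~A, Cor.~C]{FLS}), combined with \cite[Thm.~2.15]{FLS}, which says uniqueness of the cubical coarse median structure is equivalent to the class of convex-cocompact subgroups being cubulation-independent. Second, in the moreover statement your claim that ``virtual co-specialness follows from relative versions of Agol--Wise for cubulable relatively hyperbolic groups whose peripherals are virtually special'' skips a nontrivial step. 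The relative specialness theorems of Groves--Manning and Reyes require, roughly, that the \emph{induced} peripheral actions on their convex cores inside $\calX$ be virtually co-special; this is not automatic from the peripherals abstractly lying in $\frakG$. The paper bridges this gap with Lemma \ref{lem.special}: since $P \in \frakG$, convex-cocompact subgroups of $P$ are the same for all cubulations, so the virtual co-specialness of some cubulation of $P$ transfers to the convex core $Z_P \subset \calX$ via that lemma. You should make this transfer step explicit.
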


For $\G=W_G$ a right-angled Coxeter group, a \emph{loose square} is a full subgraph $\Del\subset G$ that is a square, and such that for every maximal subgraph $\Lam \subset G$ with $W_\Lam$ virtually abelian, either $\Del \subset \Lam$ or
$\Del \cap \Lam$ generates a finite subgroup of $\G$. 

Many non-relatively hyperbolic right-angled Artin and Coxeter groups satisfy assumptions i) and ii) of Proposition \ref{prop.groupsfornicecubulation}. Indeed, a RAAG is non-trivially relatively hyperbolic if and only if it is a non-trivial free product, and hence RAAGs with underlying graphs $n$-agons for $n\geq 5$ are not relatively hyperbolic and belong to $\frakG$.

For the proof of Proposition \ref{prop.groupsfornicecubulation} we require two results, the first one being an observation about virtual specialness.

\begin{lemma}\label{lem.special}
    Let $\calX,\calX_\ast$ be cubulations of the group $\G$ and assume that the action on $\calX$ is virtually co-special. If every hyperplane stabilizer for the action on $\calX_\ast$ is convex-cocompact with respect to $\calX$, then the action of $\G$ on $\calX_\ast$ is virtually co-special. 
\end{lemma}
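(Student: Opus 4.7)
The plan is to reduce the claim to the Haglund--Wise characterization of virtual specialness, which asserts that a cocompact action of a group on a $\CAT(0)$ cube complex is virtually co-special once the hyperplane stabilizers are separable (and, possibly after a finite cubical cover, that hyperplanes are two-sided and have no self- or inter-osculations in the quotient). Thus the work lies in establishing separability of hyperplane stabilizers of the action on $\calX_\ast$ inside $\G$.

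To obtain separability, first fix a finite-index subgroup $\ov\G<\G$ such that $\ov\G\backslash\calX$ is a compact special cube complex. By Haglund--Wise, $\ov\G$ embeds as a virtual retract of a right-angled Artin group in a manner compatible with convex subcomplexes of $\calX$; as a consequence, every subgroup of $\ov\G$ acting convex-cocompactly on $\calX$ is a virtual retract of $\ov\G$, and in particular is separable in $\ov\G$. Since $\ov\G$ has finite index in $\G$, any such subgroup is also separable in $\G$.

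Next, let $H<\G$ be a hyperplane stabilizer for the action on $\calX_\ast$. By hypothesis $H$ is convex-cocompact with respect to $\calX$, so the finite-index subgroup $H\cap\ov\G$ is convex-cocompact in $\ov\G$ with respect to $\calX$, and hence separable in $\G$ by the previous paragraph. Separability is preserved under passage to finite-index overgroups among commensurable subgroups, so $H$ itself is separable in $\G$. Because $\G$ acts cocompactly on $\calX_\ast$ there are only finitely many $\G$-orbits of hyperplanes, hence finitely many conjugacy classes of hyperplane stabilizers to deal with.

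Finally, applying the Haglund--Wise virtual-specialness criterion to the cocompact action of $\G$ on $\calX_\ast$: after possibly passing to a further finite-index subgroup that two-sides all hyperplanes in the quotient (via the homomorphism to $(\Z/2)^{\bbH(\calX_\ast)/\G}$ recording hyperplane orientations), the separability of each hyperplane stabilizer yields a finite-index subgroup $\ov\G'<\G$ such that $\ov\G'\backslash\calX_\ast$ is a special cube complex. This exhibits $\calX_\ast$ as virtually co-special. The main technical point to verify carefully is the first step — that the compact specialness of $\ov\G\backslash\calX$ really yields separability of $\calX$-convex-cocompact subgroups inside $\G$ — but this is the standard combination of the Haglund--Wise embedding theorem with the canonical completion and retraction, and requires no hypothesis on hyperbolicity.
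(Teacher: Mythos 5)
The key gap in your argument is that you apply the wrong Haglund--Wise criterion. The criterion you need, \cite[Thm.~9.19]{haglund-wise.special}, is a \emph{double-coset} criterion: to conclude virtual co-specialness of the cocompact action of $\G$ on $\calX_\ast$, one must show that for each pair of hyperplane stabilizers $H_1,H_2$ of $\calX_\ast$ and each $g\in\G$, the double coset $H_1 g H_2$ is separable in $\G$. Separability of the hyperplane stabilizers alone (i.e.\ of the single cosets) suffices to two-side hyperplanes and eliminate self-osculations in a finite cover, but it does \emph{not} suffice to eliminate inter-osculations between distinct hyperplanes — that is precisely what the double-coset condition is for. Your first paragraph phrases the criterion as "hyperplane stabilizers are separable (and, possibly after a finite cubical cover, $\dots$ no self- or inter-osculations in the quotient)", which quietly absorbs the hard part into the parenthetical and is effectively circular: the conclusion is that such a finite cover exists, and the mechanism for producing it from separability data is exactly the double-coset argument.

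Your first two paragraphs are otherwise sound: convex-cocompact subgroups of a compact special cubulation are virtual retracts (via canonical completion and retraction) and hence separable, separability passes from a finite-index subgroup to a commensurable overgroup since a finite union of closed cosets is closed in the profinite topology, and this gives separability of each hyperplane stabilizer $H$ of $\calX_\ast$ in $\G$. What is missing is a proof that these subgroups in fact have \emph{separable double cosets} in $\G$. The paper closes this gap by invoking \cite[Thm.~A.1]{reyes.cubrh}, which establishes double-coset separability for subgroups acting convex-cocompactly on a virtually co-special cubulation, and then applying \cite[Thm.~9.19]{haglund-wise.special}. Your route could be salvaged, but you would need to upgrade the separability step from single subgroups to double cosets — which is exactly what Reyes's theorem supplies, and is not a formal consequence of your virtual-retraction argument.
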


\begin{proof}
    If $\calX$ and $\calX_\ast$ satisfy the assumptions of the lemma, then all the double cosets of the hyperplane stabilizers for the action of $\G$ on $\calX_\ast$ are separable by \cite[Thm.~A.1]{reyes.cubrh}. Then the action of on $\calX_\ast$ is virtually co-special by the double-cosets criterion \cite[Thm.~9.19]{haglund-wise.special}.
\end{proof}

The second result we need is a criterion of convex-cocompactness for subgroups of cubulable relatively hyperbolic groups, which may be of independent interest and whose proof is postponed to the appendix.

\begin{proposition}\label{prop.criterioncvxcc}
    Let $\G$ be a relatively hyperbolic group acting properly and cocompactly on the $\CAT(0)$ cube complex $\calX$. Then the following are equivalent for a subgroup $H< \G$.
    \begin{enumerate}
        \item $H$ is convex-cocompact for the action of $\G$ on $\calX$.
        \item $H$ is relatively quasiconvex and $H\cap P$ is convex-cocompact for the action of $\G$ on $\calX$ for any maximal parabolic subgroup $P<\G$.    
    \end{enumerate}
\end{proposition}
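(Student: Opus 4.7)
The strategy is to prove the two implications separately, with $(2)\Rightarrow(1)$ being the main content. A useful preliminary observation is that for cubulable relatively hyperbolic groups, every maximal parabolic subgroup $P<\G$ is convex-cocompact in $\calX$, so we have at our disposal a $\G$-invariant family of convex cores $\{gC_P\}$ indexed by cosets of maximal parabolic subgroups; I would use this throughout.

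For $(1)\Rightarrow(2)$, suppose $H$ acts cocompactly on a convex subcomplex $C_H \subseteq \calX$. Then $H\hookrightarrow \G$ is quasi-isometrically embedded (since $\calX$ is quasi-isometric to $\G$), so by Hruska's characterization $H$ is relatively quasiconvex. To obtain convex-cocompactness of $H\cap P$, observe that for any maximal parabolic $P$, the intersection $C_H \cap C_P$ is a convex $(H\cap P)$-invariant subcomplex of $\calX$. A standard double-coset argument in relatively hyperbolic groups (the stabilizer of $gC_P$ in $H$ is $H\cap gPg^{-1}$, and only finitely many $H$-orbits of translates $gC_P$ meet $C_H$ in an unbounded set, by Hruska's double-coset finiteness theorem for relatively quasiconvex subgroups), combined with cocompactness of the $H$-action on $C_H$, yields cocompactness of $H\cap P$ on $C_H\cap C_P$.

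For $(2)\Rightarrow(1)$, I would set $Y = \Hull(H\cdot x_0)$ for some base vertex $x_0\in \calX$, which is a convex, $H$-invariant subcomplex. The goal is to show $H\backslash Y$ is compact by decomposing $Y$ into ``thick'' and ``thin'' parts relative to the peripheral cores. Relative quasiconvexity of $H$ ensures that for some $R>0$, the part of $Y$ at distance greater than $R$ from every translate $gC_P$ lies in a bounded neighborhood of the orbit $H\cdot x_0$, hence is $H$-cocompact. The assumption that $H\cap P$ is convex-cocompact in $\calX$, combined with double-coset finiteness for relatively quasiconvex subgroups, controls the ``thin parts'' $Y\cap gC_P$: only finitely many $H$-orbits of cores $gC_P$ meet $Y$ in an unbounded set, and on each such core the stabilizer in $H$ is a conjugate of some $H\cap P$ which by hypothesis acts convex-cocompactly on $\calX$.

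The main obstacle I anticipate is a technical step in the backward direction: showing that $H\cap gPg^{-1}$ actually acts cocompactly on the specific subcomplex $Y\cap gC_P$, rather than merely on some abstract convex core provided by hypothesis. I would address this by showing that the combinatorial convex hull of $(H\cap gPg^{-1})\cdot x_0$ inside $gC_P$ differs from $Y\cap gC_P$ by bounded Hausdorff distance, using that both are determined by the same hyperplanes separating nearby $H$-orbit points. Another delicate point is handling transition regions where geodesics defining $Y$ enter and exit peripheral cores; here I expect to invoke combinatorial fellow-traveller properties in $\calX$, comparing cube-complex geodesics between $H$-orbit points to relative geodesics in $\G$. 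The criterion \cite[Thm.~A.1]{reyes.cubrh} cited earlier in the paper, together with further results from \cite{reyes.cubrh} on convex-cocompactness in cubulable relatively hyperbolic groups, should provide the technical backbone for the argument.
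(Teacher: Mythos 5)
Your $(1)\Rightarrow(2)$ is essentially the paper's argument (undistorted implies relatively quasiconvex by Hruska; parabolics are convex-cocompact by Sageev--Wise; intersections are handled by the cited lemmas of \cite{reyes.cubrh}), and your version via $C_H\cap C_P$ is a workable unpacking of it.

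For $(2)\Rightarrow(1)$, the high-level plan --- reduce to cocompactness of $Y=\Hull(Hx_0)$ and decompose $Y$ into thick and thin parts relative to the peripheral cores, controlling the thin parts via convex-cocompactness of $H\cap gPg^{-1}$ --- identifies the right ingredients and the right obstacle, but the patch you propose for the obstacle does not hold up. The claim that $Y\cap gC_P$ and the hull of $(H\cap gPg^{-1})x_0$ ``are determined by the same hyperplanes separating nearby $H$-orbit points'' is unjustified: $Y$ contains geodesics between $hx_0,h'x_0$ for arbitrary $h,h'\in H$, whose traversals of $gC_P$ have no a priori relation to the smaller subgroup's orbit. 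To tame them you need a quantitative statement that the entry and exit points of such a geodesic into $gC_P$ lie a uniform distance from an $(H\cap gPg^{-1})$-translate of $x_0$, and this is exactly the hard part that cannot be waved through with ``fellow-travelling.''

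The paper resolves this by a different (and sharper) reduction. It invokes a criterion of Genevois \cite[Lem.~4.3]{genevois}: it suffices to show a single uniform constant $K$ such that every combinatorial geodesic with both endpoints in $Hx_0$ stays in $N_K(Hx_0)$. It then works geodesic-by-geodesic: push the cube-complex geodesic through a $\G$-equivariant quasi-isometry to $\Cay(\G,S)$, form a relative geodesic in $\Cay(\G,S\cup\calP)$, and split points of the quasi-geodesic into two cases. Points outside a parabolic transition are controlled by the Dru\c{t}u--Sapir tree-graded space lemmas \cite{drutu-sapir} applied to a quasi-geodesic triangle with one side on the relative geodesic lift. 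Points inside a parabolic transition are the crux, and there the paper uses Hruska's result \cite[Prop.~9.4]{hruska} --- bounded neighborhoods of a coset of a relatively quasiconvex subgroup and of a parabolic coset meet only near the corresponding intersection subgroup --- to pin the endpoints of the traversal to a uniform neighborhood of a conjugate of $H\cap P$, and then applies the Sageev--Wise convex core of that intersection (chosen to contain a uniform ball about $x_0$) to conclude. That is the precise quantified version of what you are gesturing at; to make your hull decomposition rigorous you would need essentially the same two lemmas, organized around a geodesic rather than around $Y$.
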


\begin{proof}[Proof of Proposition \ref{prop.groupsfornicecubulation}]
    By Agol's Theorem \cite[Thm.~1.1]{agol} every cubulation of a hyperbolic group is virtually co-special. Moreover, the class of convex-cocompact subgroups for any such cubulation coincides with the class of quasiconvex subgroups \cite[Prop.~7.2]{haglund-wise.special}. Since any loxodromic element in a hyperbolic group is contracting, this solves the proposition for groups in class i). 

    Now, let $\G$ be a group belonging to the class ii) (respectively iii)). In \cite[Thm.~A]{FLS} (resp. \cite[Cor.~C]{FLS}) it was proven that $\G$ has a unique \emph{cubical coarse median structure}. By \cite[Thm.~2.15]{FLS} this uniqueness result is equivalent to the class of convex-cocompact subgroups being independent of the cubulation. Since finitely generated right-angled Artin (resp. Coxeter) groups are virtually special, $\G$ satisfies Items (1) and (2) of Definition \ref{def.frakG}. The existence of a contracting element for $\G$ for a geometric group action on a $\CAT(0)$ cube complex follows from \cite[Sec.~5.2]{yang} and the references therein. 

    To prove the moreover statement, let $\G$ be a group that is hyperbolic relative to groups belonging to $\mathfrak{G}$ and let $\calX,\calX_\ast$ be two cubulations of $\G$. If $P<\G$ is a maximal parabolic subgroup, then by \cite[Thm.~1.1]{sageev-wise} it has convex cores $Z_P\subset \calX$ and $(Z_P)_\ast\subset \calX_\ast$. Also, since $P$ belongs to $\mathfrak{G}$, by Lemma \ref{lem.special} the action of $P$ on $Z_P$  is virtually co-special. As this holds for every maximal parabolic subgroup, the action of $\G$ on $\calX$ is virtually co-special either by \cite[Thm.~A]{groves-manning.special} or \cite[Thm.~1.2]{reyes.cubrh}. 
    
    Consider a group $H<\G$ that is convex-cocompact for the action of $\G$ on $\calX$. By Proposition \ref{prop.criterioncvxcc} $H$ is relatively quasiconvex and $H\cap P$ is a convex-cocompact subgroup for any maximal parabolic subgroup $P<\G$. This implies that $H\cap P<P$ is convex-cocompact for the action of $P$ on $Z_P$, see. e.g. Lemmas 2.14 and 2.15 in \cite{reyes.cubrh}. But each such $P$ belongs to $\mathfrak{G}$, so $H\cap P$ is also convex-cocompact for the action of $P$ on $(Z_P)_\ast$, implying that $H\cap P$ is convex-cocompact for the action of $\G$ on $\calX_\ast$. By Proposition \ref{prop.criterioncvxcc} we deduce that $H$ is convex-cocompact for the action of $\G$ on $\calX_\ast$, so that the actions on $\calX$ and $\calX_\ast$ have the same sets of convex-cocompact subgroups. Since any contracting element in a maximal parabolic subgroup is contracting for $\G$, we have proven that $\G$ belongs to $\mathfrak{G}$.  
\end{proof}

\begin{example}
Let $M$ be a cusped hyperbolic 3-manifold with cusps $V_1,\dots,V_r\subset M$. We affirm that $\G=\pi_1(M)$ does not belong to $\frakG$.
For each $i=1,\dots,r$ choose distinct slopes $\al_i,\beta_i$ for the cusp $V_i$. Equivalently, each pair $\al_i,\beta_i$ represents a pair of cyclic subgroups (up to commensurability) such that their union generates a finite-index subgroup of $\pi_1(V_i)$. By \cite[Cor.~1.3]{cooper-futer}, there exists a cubulation $\calX$ of $\G$ such that each $\al_i$ and $\beta_i$ represents a convex subgroup. Since any cubulation of $\Z^2$ has a subgroup that is not convex-cocompact (this follows for instance from \cite[Thm.~3.6]{wise-woodhouse}), for $\G$ as above we can produce two cubulations $\calX,\calX_\ast$ not satisfying (2) in Definition \ref{def.frakG}. 

However, the data of the slopes that are convex-cocompact in cubulations of $\G$ are the only obstruction for them determining triplets in $\frakX$. That is, if $\calX,\calX_\ast$ are two cubulations of $\G$, then $(\G,\calX,\calX_\ast)\in \frakX$ if and only if they have the same pairs of convex-cocompact slopes for each cusp subgroup. The proof of this follows the same lines as the proof of Proposition \ref{prop.groupsfornicecubulation}.
\end{example}

\begin{example}
    There exist groups $\G$ not necessarily belonging to $\frakG$ for which we still can find essentially distinct cubulations $\calX,\calX_\ast$ such that $(\G,\calX,\calX_\ast)\in \frakX$. 

    As an example, let $\G$ be a finite graph product of finite groups and let $\calX$ be the graph-product complex with the standard action by $\G$. If $\phi\in \textnormal{Aut}(\G)$ is any automorphism, then we let $\calX_\ast$ be the cubulation obtained by precomposing by $\phi$ the action of $\G$ on $\calX$. 
    Then, as long as $\G$ has a contracting element, $(\G,\calX,\calX_\ast)\in \frakX$ by combining  Theorem D (1) and Theorem 2.15 in \cite{FLS}. We note that there are many right-angled Coxeter groups with \emph{large} outer automorphism groups \cite{sale-susse}.

    If instead we apply \cite[Thm.~D (2)]{FLS}, the same conclusion holds for $\G$ any Coxeter group with a contracting element, with $\calX$ being its Niblo-Reeves cubulation, and $\calX_\ast$ obtained from $\calX$ after twisting by an automorphism of $\G$.

\end{example}


\subsection{Constructing the appropriate automaton}

In this section we construct a finite-state automaton for a triple  $(\G,\calX,\calX_\ast)$ in $\mathfrak{X}$, our main result being Proposition \ref{prop.languageL_X}. Given such a triple, our first step is the construction of a cubulation for $\G$ that simultaneously encodes the actions on $\calX$ and $\calX_\ast$. This is the content of the next lemma. 

\begin{lemma}\label{lem.preparations}
       Given $(\G,\calX,\calX_\ast)\in \frakX$ there exists a proper, cocompact, and essential action of $\G$ on a $\CAT(0)$ cube complex $\calZ$, $\G$-invariant subsets $\bbW,\bbW_\ast\subset \bbH(\calZ)$, and a finite index subgroup $\ov\G<\G$ satisfying the following. 
       \begin{itemize}
           \item Let $\hat\calX=\calZ(\bbW)$ and $\hat\calX_\ast=\calZ(\bbW_\ast)$. Then $\hat\calX$ and $\hat\calX_\ast$ embed $\G$-equivariantly as convex subcomplexes of $\calX$ and $\calX_\ast$ respectively. In particular, the triple $(\G,\hat\calX,\hat\calX_\ast)$ belongs to $\frakX$ and we have the equalities $\ell_\calX= \ell_{\hat\calX}$ and $\ell_{\calX_\ast}=\ell_{\hat\calX_\ast}$.
           \item The action of $\ov\G$ on both $\calZ$ and $\hat\calX$ is co-special.
       \end{itemize}
\end{lemma}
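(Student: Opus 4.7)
The plan is to construct $\calZ$ via a combined Sageev dual cube complex construction. I would fix basepoints $x_0 \in \calX^0$ and $x_{0,\ast} \in \calX_\ast^0$, and consider the wall-space on the $\G$-orbit $\G x_0 \subseteq \calX^0$ whose walls are the pullbacks of both the hyperplanes of $\calX$ and, via the equivariant orbit map $g x_0 \mapsto g x_{0,\ast}$, the hyperplanes of $\calX_\ast$. Sageev's construction then yields a $\CAT(0)$ cube complex $\calZ$ with a $\G$-action whose hyperplane set canonically splits as $\bbH(\calZ) = \bbW \sqcup \bbW_\ast$ corresponding to $\bbH(\calX)$ and $\bbH(\calX_\ast)$. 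The action is proper because the $\calX$-walls alone already separate orbit points with finite stabilizer, and cocompact because both wall-orbits are finite modulo $\G$. If necessary, I would pass to the essential core of $\calZ$ to guarantee essentiality without disturbing properness, cocompactness, or the splitting of $\bbH(\calZ)$.

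Next, I would identify the restriction quotient $\calZ(\bbW)$ with a $\G$-equivariant convex subcomplex $\hat\calX \subseteq \calX$, using that the Sageev dual of only the $\calX$-walls recovers the convex hull of $\G x_0$ inside $\calX$; analogously $\calZ(\bbW_\ast) \cong \hat\calX_\ast \subseteq \calX_\ast$. The translation-length equalities $\ell_\calX = \ell_{\hat\calX}$ and $\ell_{\calX_\ast} = \ell_{\hat\calX_\ast}$ are then automatic: for any $x \in \hat\calX^0$, the orbit $(g^n x)_n$ stays in $\hat\calX^0$ by $\G$-invariance, and since convex subcomplexes of $\CAT(0)$ cube complexes are isometrically embedded in the combinatorial metric, the defining limits coincide. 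Membership $(\G, \hat\calX, \hat\calX_\ast) \in \frakX$ then follows routinely: properness and cocompactness are inherited, a contracting element transfers to the convex subcomplex, and hyperplane stabilizers in $\hat\calX_\ast$ coincide with the corresponding stabilizers in $\calX_\ast$, which remain convex-cocompact in the convex subcomplex $\hat\calX \subseteq \calX$ by hypothesis (2) of Definition \ref{def.classX}.

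The hard part is producing a finite-index $\ov\G < \G$ acting co-specially on $\calZ$. My strategy is to apply the Haglund-Wise double-cosets criterion \cite[Thm.~9.19]{haglund-wise.special} to the proper cocompact $\G$-action on $\calZ$: every hyperplane stabilizer in $\calZ$ is either a hyperplane stabilizer in $\calX$ (automatically convex-cocompact in $\calX$) or one in $\calX_\ast$ (convex-cocompact in $\calX$ by assumption), so all relevant hyperplane stabilizers appear as convex-cocompact subgroups of the virtually co-special cubulation $\calX$. By \cite[Thm.~A.1]{reyes.cubrh}, the corresponding double cosets are separable in $\G$, so the criterion yields a finite-index subgroup acting co-specially on $\calZ$. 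Taking $\ov\G$ to be the intersection of this subgroup with any finite-index subgroup acting co-specially on $\calX$ produces the required group, since co-specialness restricts from an ambient complex to any invariant convex subcomplex, giving the conclusion on both $\calZ$ and $\hat\calX$.
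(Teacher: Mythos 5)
Your approach is genuinely different from the paper's. The paper first passes to the $\G$-essential cores $\hat\calX, \hat\calX_\ast$ (via Caprace--Sageev) and then invokes \cite[Thm.~2.15]{FLS}, which produces a common cubulation $\calZ'$ with $\hat\calX, \hat\calX_\ast$ as restriction quotients by constructing a $\G$-invariant \emph{cofinite median subalgebra} of $\hat\calX^0 \times \hat\calX_\ast^0$. The paper then has to extend this argument to the case where the action on $\calX_\ast$ is not proper, adapting \cite[Prop.~7.9]{fioravanti}. Your proposal instead builds $\calZ$ directly as a combined Sageev dual of the orbit $\G x_0$ with walls pulled back from both $\calX$ and $\calX_\ast$. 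The two constructions are closely related (FLS is morally a souped-up Sageev dual), but the naive version you propose sweeps the actual technical content under the rug.

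The gap is in your assertion that the action of $\G$ on $\calZ$ is "cocompact because both wall-orbits are finite modulo $\G$." Finitely many wall orbits do not by themselves imply cocompactness of a Sageev dual: the vertices of the dual are consistent ultrafilters, which need not be in uniformly finite-to-one correspondence with the points of $\G x_0$, and the dual can acquire unboundedly many orbits of vertices. Verifying cocompactness is precisely the heart of \cite[Thm.~2.15]{FLS} — there the combined dual is controlled by exhibiting a cofinite median subalgebra containing a suitable product $K \times K_\ast$. A closely related gap sits in your claim that "the Sageev dual of only the $\calX$-walls recovers the convex hull of $\G x_0$ inside $\calX$," and hence that $\calZ(\bbW)$ embeds as a convex subcomplex of $\calX$: that the restriction quotient of the combined dual by the $\calX$-walls agrees with (the dual of) the $\calX$-walls alone, and that this in turn is an actual subcomplex of $\calX$ rather than something larger, are both parts of what \cite[Thm.~2.15]{FLS} establishes and cannot be asserted for free. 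Finally, you never address that the action on $\calX_\ast$ need not be proper (only cocompact, per Definition \ref{def.classX}); the paper explicitly adapts the cofiniteness argument of \cite[Prop.~7.9]{fioravanti} to handle this. Once $\calZ$ is constructed with the stated properties, your remaining steps — properness via the $\calX$-walls, essentiality by passing to a core, the translation-length identities via convexity, membership in $\frakX$, and co-specialness via \cite[Thm.~A.1]{reyes.cubrh} and the double-coset criterion \cite[Thm.~9.19]{haglund-wise.special} — are all sound and closely parallel the paper (the paper factors the last step through Lemma \ref{lem.special}, but it is the same argument).
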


\begin{proof}
    By \cite[Prop.~3.5]{caprace-sageev} let $\hat\calX$ and $\hat\calX_\ast$ be the $\G$-essential cores of $\calX$ and $\calX_\ast$ respectively, which $\G$-equivariantly embed in $\calX$ and $\calX_\ast$ as convex subcomplexes. 
    
    We claim that there exists a cubulation $\calZ'$ of $\G$ and $\G$-invariant subsets $\bbW,\bbW_\ast\subset \bbH(\calZ')$ so that $\hat\calX$ and $\hat\calX_\ast$ are $\G$-equivariantly isometric to $\calZ'(\bbW)$ and $\calZ'(\bbW_\ast)$. Under the additional assumption that the action on $\calX_\ast$ is proper, this is the content of the implication $(3) \Rightarrow (4)$ in \cite[Thm.~2.15]{FLS}, so we just sketch how to adapt the proof of this implication to the general case.
    
    The way $\calZ'$ is constructed in \cite[Thm.~2.15]{FLS} is by finding a $\G$-invariant and co-finite \emph{median subalgebra} $M\subset \hat\calX^0 \times \hat\calX_\ast^0$ containing the product $K\times K_\ast$ for $K,K_\ast$ finite sets such that $\G K=\hat\calX^0$ and $\G K_\ast=\hat\calX_\ast^0$. In the case the action on $\hat\calX_\ast$ is proper, this follows from local finiteness of $\hat\calX\times \hat\calX_\ast$ and the existence of some $\G$-invariant and cofinite median subalgebra $N\subset \hat\calX^0 \times \hat\calX_\ast^0$. On the other hand, the existence of such an algebra $N$ follows from the implication $(3) \Rightarrow (1)$ in \cite[Prop.~7.9]{fioravanti}. For this proposition, the assumption is that the action of $\G$ on $\hat\calX_\ast$ is essential and has only finitely many orbits of hyperplanes, and in that case $N$ is median algebra generated by the $\G$-orbit of any vertex in $\hat\calX^0\times \hat\calX_\ast^0$. However, the same argument in the proof of \cite[Prop.~7.9]{fioravanti} implies that the median algebra generated by the $\G$-translates of \emph{any} finite subset $\hat K$ of $\hat\calX^0\times \hat\calX_\ast^0$ is cofinite (this can be done by proving the appropriate generalization of \cite[Lem.~7.11]{fioravanti} so that the Claim in the proof of \cite[Prop.~7.9]{fioravanti} still holds). In particular, if the action of $\G$ on $\hat\calX_\ast$ is cocompact, we can choose $\hat K=K\times K_\ast$ as above, and then $\calZ'$ can be constructed from $N$ as in the rest of the proof of  \cite[Thm.~2.15]{FLS}. 
    
    To finish the proof of the first assertion, note that the action of $\G$ on $\calZ'$ may not be essential, so instead we consider the projection quotient $\calZ=\calZ'(\bbW\cup \bbW_\ast)$, which is essential and cocompact since the action of $\G$ on both $\hat\calX$ and $\hat\calX_\ast$ is essential and the action on $\calZ'$ is cocompact. The complex $\calZ$ still $\G$-equivariantly projects onto $\hat\calX$ and $\hat\calX_\ast$, so the action of $\G$ on $\calZ$ is proper because the action of $\G$ on $\hat\calX$ is proper.

    To prove the second assertion, note that by \cite[Thm.~2.15]{FLS} the actions of $\G$ on $\hat\calX$ and $\calZ$ have the same sets of convex-cocompact subgroups, so any hyperplane stabilizer for the action of $\G$ on $\calZ$ will be convex-cocompact with respect to $\hat\calX$. But the action of $\G$ on $\hat\calX$ is virtually co-special (it is a convex core for $\G$ acting on $\calX$), so $\calZ$ is virtually co-special by Lemma \ref{lem.special}. Finally, co-specialness is preserved under taking finite-index subgroups, so we can choose $\ov\G$ so that both quotients $\ov\G \bs \calZ$ and $\ov\G \bs \hat\calX$ are special.
\end{proof}

In virtue of the lemma above, throughout the rest of the section we will work under the following convention.

\begin{convention}\label{conv.restriction}
    $\G$ is a non-virtually cyclic group acting properly, cocompactly and essentially on the $\CAT(0)$ cube complex $\calZ$, and $\bbW\subset \bbH(\calZ)$ is a $\G$-invariant subset such that the action of $\G$ on $\calX=\calZ(\bbW)$ is proper, cocompact and essential. Let $\phi:\calZ \ra \calX$ be the restriction quotient. 
    
    Also, let $\ov\G<\G$ be a finite index subgroup such that the quotients $\ov\calZ=\ov\G \bs \calZ$ and $\ov\calX=\ov\G \bs \calX$ are special cube complexes. 
    We fix a base vertex $\wtilde o\in \calZ$ and set $o=\phi(\wtilde o)\in \calX$. 
\end{convention}

Let $S_\calZ$ and $S_\calX$ be the set of oriented hyperplanes in $\ov\calZ$ and $\ov\calX$ respectively. By specialness all the hyperplanes in $\ov\calZ$ and $\ov\calX$ are 2-sided, so there exist two orientations for each hyperplane. Since each hyperplane in $S_\calX$ corresponds to a $\ov\G$-orbit of oriented walls in $\bbW\subset \bbH(\calZ)$, there is a natural injection of $S_\calX$ into $S_\calZ$, so often we will consider $S_\calX$ as a subset of $S_\calZ$. The \emph{label} of an oriented hyperplane in $\calZ$ (resp. $\calX$) is its projection in $\ov\calZ$ (resp. $\ov\calX$).

We say that a word $w=\frakh_1\cdots \frakh_n$ in $(S_\calZ)^\ast$ is \emph{represented} by a (combinatorial) path $\gam=(\gam_0,\dots,\gam_n)$ in $\calZ$ if for each $i$ the oriented hyperplane $\frakh_i$ is the image in $\ov\calZ$ of the oriented hyperplane dual to the edge from $\gam_{i-1}$ to $\gam_i$. Similarly, we define when a word in $(S_\calX)^\ast$ is represented by a path in $\calX$. A consequence of specialness is that if a word is represented by two paths with the same initial vertex, then the paths must coincide. 

The next proposition gives us a finite-state automaton over $S_\calX$ that keeps track of the action of $\ov\G$ on both $\calX$ and $\calZ$.


\begin{proposition}\label{prop.languageL_X}
    There exists a language $L_\calX\subset (S_\calX)^\ast$ parametrized by the pruned finite-state automaton \[\calA_\calX=(\calG_\calX=(V_\calX,E_\calX),\pi_\calX,I_\calX,V_\calX)\] satisfying the following.
    \begin{enumerate}
        \item There exists $C\geq 1$ such that any $w\in L_\calX$ is represented by at most $C$ paths in $\calG_\calX$ starting at an initial state. 
        \item Every $w\in L_\calX$ is represented by a unique combinatorial geodesic $\gam_w\subset \calX$ starting at the vertex $o$. We let $\tau_\calX(w)$ denote the final vertex of $\gam_w$.
        \item The map $\tau_\calX:L_\calX \ra \calX^0$ is a bijection. 
        \end{enumerate}

        Moreover, there exist maps $\Psi:V_\calX \ra (S_\calZ \bs S_\calX)^\ast$ and  $\Xi:V_\calX \ra \ov\calX^0$ satisfying the following. 
        \begin{enumerate}
        \item[(4)] If $w\in L_\calX$ is represented by the path $\omega=(v_0\xrightarrow{e_1} v_1\cdots \xrightarrow{e_n} v_n)$ in $\calG_\calX$ starting at an initial state, then the concatenation
        \begin{equation}\label{eq.concatenation}            \al(\om):=\Psi(v_0)\pi_\calX(e_1)\Psi(v_1) \cdots \pi_\calX(e_n)\in (S_\calZ)^*
        \end{equation}
        can be represented by a unique geodesic path $\widetilde \gam _{\al(\om)}$ in $\calZ$ with initial vertex $\wtilde o$ and final vertex $\tau_\calZ(\al(\om))$, so that $\phi( \tau_\calZ(\al(\om)))=\tau_\calX(w)$.

         \item[(5)] If $w\in L_\calX$ is represented by the path $\omega=(v_0\ra v_1\cdots \ra v_n)$ in $\calG_\calX$ starting at an initial state, then the path $\gam_w=(\gam_0,\dots,\gam_n)$ in $\calX$ projects to $(\Xi(v_0),\dots,\Xi(v_n))$ in $\ov\calX$. 
         \end{enumerate}
\end{proposition}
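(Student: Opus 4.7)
The plan is to adapt the Cannon/Calegari-Fujiwara construction of a geodesic automaton from hyperbolic groups to the cubical setting, leveraging the co-specialness of the $\ov\G$-actions and the restriction quotient $\phi: \calZ \to \calX$. Fix a total order on the finite set $S_\calX$ of oriented hyperplanes of $\ov\calX$. For each $v \in \calX^0$, there is a unique \emph{lex-first} combinatorial geodesic $\gam_v$ from $o$ to $v$; let $\lam(\gam_v) \in (S_\calX)^\ast$ be the word obtained by projecting its consecutive edges to $\ov\calX$, and set $L_\calX := \{\lam(\gam_v) : v \in \calX^0\}$. Since $\ov\calX$ is special, distinct oriented edges at a common vertex carry distinct labels in $S_\calX$, so every word in $(S_\calX)^\ast$ determines at most one edge-path from $o$ in $\calX$; this immediately gives properties (2) and (3), with $\tau_\calX : L_\calX \to \calX^0$ the inverse bijection sending $\lam(\gam_v) \mapsto v$.

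To realize $L_\calX$ as a regular language I would use the Salvetti embedding $\ov\calX \hookrightarrow \ov R_G$ coming from the co-special structure, producing an $\ov\G$-equivariant convex embedding $\calX \hookrightarrow R_G$. The language of lex-first combinatorial geodesics in $R_G$ issuing from $o$ is regular: this is the standard piling-style normal form for the RAAG $A_G$, accepted by a finite-state automaton whose states record a bounded amount of recent history capturing the commutation constraints of $A_G$. Since $\calX$ is convex inside $R_G$, the condition that a lex-first $R_G$-geodesic remains in $\calX$ is checked locally at each step; cocompactness of $\ov\G \curvearrowright \calX$ then ensures that only finitely many such local configurations occur up to $\ov\G$-translation, indexed by the finite set $\ov\calX^0$. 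The finite state set $V_\calX$ is the product of these two pieces of data, transitions are the lex-first-admissible one-letter extensions that stay in $\calX$, and $I_\calX$ consists of the configurations realizable at the base vertex $o$. Property (1) follows because only a uniformly bounded number of initial configurations at $o$ can be compatible with a given prefix.

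For the remaining properties, fix a $\ov\G$-equivariant set-theoretic section $s : \calX^0 \to \calZ^0$ of $\phi$ with $s(o) = \wtilde o$; such a section exists after replacing $\ov\G$ by a deep-enough finite index subgroup so that its action on $\calZ$ is free on vertices. For a state $v \in V_\calX$ arising from a vertex $u \in \calX^0$, declare $\Xi(v) \in \ov\calX^0$ to be the projection of $u$, and define $\Psi(v)$ to be the label sequence in $(S_\calZ \setminus S_\calX)^\ast$ of a canonical $\calZ$-geodesic from $s(u)$ to the preimage reached just before the next $\bbW$-hyperplane crossing encoded by the outgoing transition edge of $\calA_\calX$. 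By $\ov\G$-equivariance of $s$ and the cone-type-dependence of outgoing transitions, the word $\Psi(v)$ is well-defined, and concatenation produces $\al(\om) \in (S_\calZ)^\ast$ as the label sequence of a combinatorial path in $\calZ$ starting at $\wtilde o$; specialness of $\ov\calZ$ guarantees that this path lifts uniquely and is geodesic, ending at a vertex $\tau_\calZ(\al(\om))$ that projects under $\phi$ to $\tau_\calX(w)$. The main technical obstacle is establishing the regularity of $L_\calX$ without any hyperbolicity hypothesis on $\calX$; this is precisely where co-specialness and the Salvetti embedding are essential, supplying a Cannon-type finiteness that is a priori unavailable in a general $\CAT(0)$ cube complex.
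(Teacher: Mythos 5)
Your approach inverts the paper's: you build the geodesic automaton directly on $\calX$ (via a Cannon/Li--Wise style argument in the Salvetti complex) and only afterwards try to attach the $\calZ$-data through $\Psi$, whereas the paper first applies Li--Wise to $\calZ$ to get a deterministic geodesic automaton $\calA_\calZ$ over $S_\calZ$ and then constructs $\calA_\calX$ by ``collapsing'' maximal runs of $S_\calZ\setminus S_\calX$ edges into automaton vertices. For Items (1)--(3) and (5) your route is plausible, but there is a genuine gap in the construction of $\Psi$ and the verification of (4).

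The problem is coherence of the lift. You define $\Psi(v)$ as the label word of a ``canonical $\calZ$-geodesic from $s(u)$ to the preimage reached just before the next $\bbW$-hyperplane crossing,'' where $s:\calX^0\to\calZ^0$ is a $\ov\G$-equivariant section. But when you actually trace out $\al(\om) = \Psi(v_0)\pi_\calX(e_1)\Psi(v_1)\cdots\pi_\calX(e_n)$ as an edge-path in $\calZ$ starting at $\wtilde o$, the vertex reached after reading $\Psi(v_0)\pi_\calX(e_1)$ lies in $\phi^{-1}(u_1)$ but has no reason to equal $s(u_1)$, so the next block $\Psi(v_1)$ (defined to start at $s(u_1)$) does not concatenate into a valid path; the construction is not even well-posed as stated. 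Even if this bookkeeping were repaired, you would still owe a proof that the resulting concatenation is \emph{geodesic} in $\calZ$ — that is the substantive content of (4), and specialness of $\ov\calZ$ only gives uniqueness of paths with prescribed starting vertex and label sequence, not geodesicity. The paper makes both points automatic by construction: $\al(\om)$ is by definition a prefix/truncation of a word in the Li--Wise language $L_\calZ$ of lex-first $\calZ$-geodesics from $\wtilde o$, and $\Psi(v)$ simply records the $S_\calZ\setminus S_\calX$-labeled subpath of $\calG_\calZ$ stored in the vertex $v$. Without a top-down mechanism of this kind tying the $\calX$-automaton to genuine $\calZ$-geodesics, property (4) — which is exactly what is needed later to read off $\ell_{\calX_\ast}^{\frakw_\ast}$-lengths from the automaton via Lemma~\ref{lem.psiell_ast} — does not follow.
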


We start the construction of $\calA_\calX$ by considering a regular language $L_\calZ\subset (S_\calZ)^\ast$ parametrizing geodesics in $\calZ$. For that we follow Sections 5.2 and 5.3 in \cite{li-wise}. 
More precisely, there exists a deterministic pruned finite-state automaton 
\[\calA_\calZ=(\calG_\calZ=(V_\calZ,E_\calZ),\pi_\calZ,\{\ast_\calZ\},V_\calZ)\]
over $S_\calZ$ parametrizing the language $L_\calZ$ and satisfying:
\begin{itemize}
    \item every $\widetilde w\in L_\calZ$ is represented by a unique geodesic path $\wtilde\gam_{\wtilde w}$ in $\calZ$ starting at $\wtilde o$ and ending at the vertex $\tau_\calZ(\wtilde w)$; and,  
    \item the map $\tau_\calZ: L_\calZ \ra \calZ^0$ is a bijection.
\end{itemize}

\begin{remark}\label{rmk.equivtoli-wise}
    The language constructed in \cite{li-wise} actually depends on an injection of $\ov\G$ into a right-angled Artin group $A_G$ inducing a $\ov\G$-equivariant isometric embedding of $\calZ$ into $R_G$ as a convex subcomplex, where $R_G$ is the universal cover of the Salvetti complex $\ov{R}_G$ associated to $G$. In that case, the language obtained is over the alphabet of oriented hyperplanes of $\ov{R}_G$. The language $L_\calZ$ described above is a particular case of this construction, when we consider the local isometric immersion  $\ov\calZ \ra \ov{R}_{G_{\ov\calZ}}$ for $G_{\ov\calZ}$ being the \emph{crossing graph} of $\ov\calZ$. For this immersion there is a natural bijection between $S_\calZ$ and the set of oriented hyperplanes in $\ov{R}_{G_{\ov\calZ}}$, see for instance \cite[Lem.~4.1]{haglund-wise.special}.
\end{remark}


We use the automaton $\calA_\calZ$ to produce a language $L_\calX$ over the alphabet $S_\calX$. We do this by first constructing an automaton $$\hat \calA_\calX=(\hat \calG_\calX=(\hat V_\calX,\hat E_\calX),\hat \pi_\calX,\hat I_\calX,\hat V_\calX)$$ as follows.

\begin{definition}
    Let $\hat V_\calX$ be the set of finite directed paths (possibly of length 0) in $\calG_\calZ$ of the form $\omega=(v_0\xrightarrow{e_1}\cdots \xrightarrow{e_n} v_n)$ and satisfying:
    \begin{itemize}
        \item $\pi_\calZ(e_i)\in S_\calZ \bs S_\calX$ for every $1\leq i\leq n$;
        \item either $v_0=\ast_\calZ$ or there exists an edge in $E_\calZ$ with label in $S_\calX$ and final vertex $v_0$; and, 
        \item either there exists an edge in $E_\calZ$ with label in $S_\calX$ and initial vertex $v_n$ or there are no edges in $E_\calZ$ with initial vertex $v_n$. 
    \end{itemize}
We consider an edge $\hat e$ from the vertex $\omega$ to the vertex $\omega'$ in $\hat V_\calX$ if there exists an edge $e\in E_\calZ$ with $\pi_\calZ(e)\in S_\calX$ and such that the concatenation $\omega e \omega'$ is a path in $\calG_\calZ$. We define $\hat \pi_\calX(\hat e):=\pi_\calZ(e)$ and let $\hat E_\calX$ be the set of all of the edges defined in this way. Finally, a vertex of $\hat V_\calX$ is an initial state if its initial vertex (as a path in $\calG_\calZ$) is $\ast_\calZ$. We let $\hat I_\calX$ be the set of all the initial states. 

\end{definition}

\begin{lemma}\label{lem.hatA_X}
    The set $\hat V_\calX$ is finite and non-empty.
    Therefore, $\hat \calA_\calX$ defines a pruned finite-state automaton over $S_\calX$.
\end{lemma}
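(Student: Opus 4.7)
The plan is to establish three properties: finiteness of $\hat V_\calX$, its non-emptiness, and the pruned nature of $\hat \calA_\calX$. The technical core is the following observation, which I would prove first: the fibers of the restriction quotient $\phi : \calZ \to \calX$ have uniformly bounded diameter. Indeed $\calZ$ is locally finite (being a $\CAT(0)$ cube complex carrying a proper cocompact $\G$-action), so an infinite fiber $\phi^{-1}(x)$ would contain two distinct vertices in a common $\G$-orbit, related by some $g$ with $gx = x$; this places $g$ in $\textnormal{Stab}(x)$, contradicting properness of the action on $\calX$. Cocompactness then makes the resulting diameter bound uniform over all $x \in \calX^0$.

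Given this, finiteness is immediate: any $\omega = (v_0 \xrightarrow{e_1} \cdots \xrightarrow{e_n} v_n) \in \hat V_\calX$ can be prepended by a $\calG_\calZ$-path from $\ast_\calZ$ to $v_0$ (available by prunedness of $\calA_\calZ$), producing an accepted word in $L_\calZ$ whose unique geodesic realization in $\calZ$ contains a length-$n$ sub-geodesic with edges dual only to hyperplanes outside $\bbW$. This sub-geodesic is therefore contained in a single fiber of $\phi$, forcing $n$ to be bounded by the fiber diameter. Since $V_\calZ$ and $S_\calZ$ are finite, boundedness of the length forces $\hat V_\calX$ to be finite. For non-emptiness, I would start with the length-zero path at $\ast_\calZ$: if $\ast_\calZ$ admits an $S_\calX$-outgoing edge this path already lies in $\hat V_\calX$, and otherwise one extends along $S_\calZ \setminus S_\calX$-edges, again terminating by the diameter bound at a dead-end or a vertex with an outgoing $S_\calX$-edge.

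For the pruned property, take any $\omega \in \hat V_\calX$ with initial vertex $v_0$. If $v_0 = \ast_\calZ$ then $\omega$ is already initial. Otherwise condition (2) furnishes an $S_\calX$-labeled edge $e_0$ from some $u$ to $v_0$; I would pick any $\calG_\calZ$-path from $\ast_\calZ$ to $u$ and decompose it at its $S_\calX$-labeled edges as $\omega_1 f_1 \omega_2 f_2 \cdots f_{k-1} \omega_k$, with each $\omega_i$ having internal labels in $S_\calZ \setminus S_\calX$. Each $\omega_i$ then lies in $\hat V_\calX$ (the endpoints of interior segments are bracketed by $S_\calX$-edges, and $\omega_k$ is followed by $e_0$), $\omega_1$ is an initial state, and the $f_i$ together with $e_0$ yield an $\hat \calA_\calX$-path from $\omega_1$ to $\omega$. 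The main obstacle is the initial diameter bound on fibers of $\phi$: it is what converts the a priori unconstrained length of $\calG_\calZ$-paths with labels in $S_\calZ \setminus S_\calX$ into a finite enumeration, and everything else follows from relatively direct combinatorial decompositions.
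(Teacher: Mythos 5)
Your argument is essentially the paper's: the uniform bound on fibers of the restriction quotient $\phi\colon\calZ\to\calX$ (you bound diameter; the paper bounds cardinality by a constant $M$) forces every vertex of $\hat V_\calX$ to correspond to a geodesic in $\calZ$ confined to a single fiber, giving the length bound and hence finiteness, and prunedness is inherited from $\calA_\calZ$, where your decomposition at $S_\calX$-labeled edges simply fills in the details that the paper dispatches in one line. Two small slips worth noting, neither fatal: the step that an infinite fiber must contain two distinct vertices in a common $\G$-orbit follows from cocompactness (finitely many $\G$-orbits of vertices in $\calZ$), not from local finiteness as you assert; and you omit the paper's observation that $S_\calX\neq\emptyset$ (which follows from essentiality of the action on $\calX$ and $\G$ being non-virtually cyclic), needed so that the final conclusion --- that $\hat\calA_\calX$ is an automaton over the alphabet $S_\calX$ --- is not vacuous.
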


\begin{proof}
The set $S_\calX$ is non-empty because $\G$ is non-elementary and $\calX$ is essential, and since $\calA_\calZ$ is pruned we can find a vertex in $\hat I_\calX$, so that $\hat V_\calX$ is non-empty. 

To show finiteness, let $M$ be the maximum cardinality of a preimage $\phi^{-1}(x)\cap \calZ^0$ among $x\in \calX^0$, which is finite since $\phi$ is $\G$-invariant and the action of $\G$ on $\calZ$ is cocompact. If $\omega=(v_0\xrightarrow{e_1}\cdots \xrightarrow{e_n} v_n)$ is a vertex in $\hat V_\calX$, the fact that $\calA_\calZ$ is pruned implies the existence of a geodesic path $\wtilde \gam$ in $\calZ$ representing the word $\pi_\calZ(e_1)\cdots \pi_\calZ(e_1)\in (S_\calZ \bs S_\calX)^\ast\subset (S_\calX)^\ast$. Since $\phi$ collapses the hyperplanes not belonging to $\bbW$, the image $\phi(\wtilde \gam)$ consists of a single point, implying that $n+1 \leq M$. We conclude that every vertex in $\hat V_\calX$ has uniformly bounded length as a path in $\calG_\calZ$, so $\hat V_\calX$ is finite because $\calG_\calZ$ is. 

Finally, $\calA_\calZ$ being pruned implies that $\hat \calA_\calX$ is pruned, and by construction this automaton is over the alphabet $S_\calX$.
\end{proof}

\begin{definition}\label{def.L_X} 
We let $L_\calX \subset (S_\calX)^*$ be the language parametrized by $\hat \calA_\calX$. 
\end{definition}

If 
$\hat \om =(\om_0\xrightarrow{\hat e_1} \cdots \xrightarrow{\hat e_n} \om_n)$ is a path in $\hat \calG_\calX$, then the concatenation
\begin{equation}\label{eq.conc}
    c(\hat \om):=\om_0e_1\cdots \om_{n-1}e_n
\end{equation}
is a path in $\calG_\calZ$. Let $\hat\al(\hat \om)\in (S_\calZ)^\ast$ be the word represented by $c(\hat \om)$. 




\begin{lemma}\label{lem.tauX} \begin{enumerate}
    \item If $\hat \om,\hat \om'$ are paths in $\hat \calG_\calX$ starting at an initial state and representing the same word $w\in L_\calX$, then $\phi(\tau_{\calZ}(\hat \al (\hat \om)))=\phi(\tau_{\calZ}(\hat \al (\hat \om')))\in \calX^0$. We denote this vertex by $\tau_\calX(w)$.
    \item Any $w$ in $L_\calX$ is represented by a unique geodesic path $\gam_w$ in $\calX$ starting at $o$ and ending at $\tau_\calX(w)$. 
    \item The map $\tau_\calX:L_\calX \ra \calX^0$ is a bijection.
\end{enumerate}
\end{lemma}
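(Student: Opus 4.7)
The plan is to lift paths in $\hat\calG_\calX$ to paths in $\calG_\calZ$ via the unfolding $c$, lift $L_\calZ$-words to geodesics in $\calZ$ via the bijection $\tau_\calZ$, and then project down by $\phi$, using specialness of $\ov\calX$ to pin things down on the $\calX$-side.

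\textbf{For Parts (1) and (2).} Given a path $\hat\omega = (\omega_0 \to \cdots \to \omega_n)$ in $\hat\calG_\calX$ representing $w$ with $\omega_0 \in \hat I_\calX$, the unfolding $\hat\alpha(\hat\omega)$ lies in $L_\calZ$ and is realized by a unique combinatorial geodesic $\tilde\gam$ in $\calZ$ from $\wtilde o$ to $\tau_\calZ(\hat\alpha(\hat\omega))$. The restriction quotient $\phi$ collapses edges dual to hyperplanes outside $\bbW$—namely those labeled in $S_\calZ \setminus S_\calX$—and induces a natural bijection $\bbW \to \bbH(\calX)$. Hence $\phi(\tilde\gam)$ is a combinatorial path in $\calX$ starting at $o$ whose non-collapsed edges, in order, carry exactly the labels of $w$; and since $\tilde\gam$ crosses no hyperplane twice, neither does $\phi(\tilde\gam)$, making it a geodesic in $\calX$. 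Specialness of $\ov\calX$ then implies that a word in $(S_\calX)^*$ is represented by at most one combinatorial path in $\calX$ from a given vertex, so $\phi(\tilde\gam)$ depends only on $w$, not on $\hat\omega$. This simultaneously defines $\tau_\calX(w) := \phi(\tau_\calZ(\hat\alpha(\hat\omega)))$ (Part 1) and exhibits $\gam_w := \phi(\tilde\gam)$ as the unique geodesic in $\calX$ from $o$ to $\tau_\calX(w)$ representing $w$ (Part 2).

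\textbf{For Part (3).} Surjectivity proceeds by reversing the construction: for $x \in \calX^0$, choose any $\tilde x \in \phi^{-1}(x) \cap \calZ^0$, take the unique $\tilde w \in L_\calZ$ with $\tau_\calZ(\tilde w) = \tilde x$, and split its $\calG_\calZ$-path at the $S_\calX$-labeled edges into blocks $\omega_0 e_1 \omega_1 \cdots e_n \omega_n$. The boundary conditions in the definition of $\hat V_\calX$ were tailored precisely so that each $\omega_i$ is a valid vertex, producing a path $\hat\omega$ in $\hat\calG_\calX$ from $\hat I_\calX$ whose accepted word $w \in L_\calX$ satisfies $\tau_\calX(w) = \phi(\tilde x) = x$. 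For injectivity, if $w \neq w' \in L_\calX$ both map to $x$, any pair of representations $\hat\omega, \hat\omega'$ unfolds to distinct $L_\calZ$-words (their $S_\calX$-subsequences differ), which by bijectivity of $\tau_\calZ$ end at distinct preimages of $x$ in $\calZ$. The strategy to rule this out is to show that all preimages $\tilde x \in \phi^{-1}(x)$ yield, via the surjectivity construction, representations of a \emph{common} $L_\calX$-word—equivalently, the ordered $S_\calX$-subsequence of any $L_\calZ$-normal form above $x$ is determined by $x$ alone.

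\textbf{Main obstacle.} The crux is this last compatibility claim in the injectivity argument: two lifts $\tilde x, \tilde x' \in \phi^{-1}(x)$ must have $L_\calZ$-representatives with the same ordered sequence of $S_\calX$-letters. Since the hyperplanes of $\bbW$ separating $\wtilde o$ from such a lift project bijectively onto the hyperplanes of $\calX$ separating $o$ from $x$, the multisets of $S_\calX$-letters agree; the work is to show the orderings agree too. This will require unpacking the Li--Wise construction of $L_\calZ$ recalled in Remark~\ref{rmk.equivtoli-wise}, exploiting the fact that the canonical geodesic normal form in $\calZ$ is built from a fixed ordering of hyperplanes of $\ov R_{G_{\ov\calZ}}$ that is compatible with the restriction quotient $\phi$.
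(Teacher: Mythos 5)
Your arguments for (1) and (2) are correct and in fact cleaner than the paper's. Once you observe that $\phi(\widetilde\gam_{\hat\alpha(\hat\omega)})$ is a combinatorial geodesic in $\calX$ starting at $o$ and representing $w$, the specialness of $\ov\calX$ (a word over $S_\calX$ is represented by at most one path from a fixed vertex) immediately forces this path to be independent of the representing path $\hat\omega$; this yields both the well-definedness of $\tau_\calX(w)$ and the existence and uniqueness of $\gam_w$ at once. The paper instead proves (1) by a letter-by-letter induction that tracks separating $\bbW$-hyperplanes, and then deduces (2); your route is more direct. Surjectivity in (3) follows the same splitting of an $L_\calZ$-normal form at the $S_\calX$-labeled edges as the paper does, and both treatments handle the final block similarly.

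The genuine gap is injectivity in (3), which you correctly flag but do not close. What has to be shown is exactly the coherence you identify: the ordered $S_\calX$-subsequence of the $L_\calZ$-normal form of a lift $\widetilde x$ must depend only on $\phi(\widetilde x)$—equivalently, if $\widetilde x,\widetilde x'\in\phi^{-1}(x)$ both have $L_\calZ$-normal forms ending in an $S_\calX$-letter, then $\widetilde x=\widetilde x'$. Without this, two lifts of $x$ could produce two distinct words in $L_\calX$ mapping to $x$ under $\tau_\calX$. The paper disposes of injectivity by ``induction on the length of words in $L_\calX$ combined with the fact that no two distinct edges with the same initial vertex in $\calX$ are dual to hyperplanes with the same label''; to run that induction one must first see that the geodesics $\gam_w$ and $\gam_{w'}$ to $x$ share their final edge, i.e.\ that the last $S_\calX$-letter is determined by $x$ — which is precisely the coherence property you isolate. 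So you have located the right obstruction, but deferring it to ``unpacking the Li--Wise construction'' leaves the proposal incomplete on the one nontrivial claim of Part (3), which is also the one used later (e.g.\ in Lemma~\ref{lem.gam_omC}).
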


\begin{proof}
By induction on the length of $w$ we will prove simultaneously assertion (1) and that $d_\calX(o,\tau_\calX(w))$ equals the length of $w$. Suppose that $w$ has length $n$ and is represented by the paths 
\[\hat \om =(\om_0\xrightarrow{\hat e_1} \cdots \xrightarrow{\hat e_n} \om_n) \text{ and } \hat \om' =(\om_0'\xrightarrow{\hat e_1'} \cdots \xrightarrow{\hat e_n'} \om_n')\]
in $\hat \calG_\calX$. If $n=0$ then $\hat \al(\hat \om)=\om_0$ has no letters in $S_\calX$, and hence the projection of $\wtilde \gam_{\hat \al(\hat \om)}$ under $\phi$ consists of a single vertex, which must be $o$. As the same happens for $\om'$, this solves the base case. 

If $n\geq 1$ we consider $\hat \xi=(\om_0\xrightarrow{\hat e_1} \cdots \xrightarrow{\hat e_{n-1}} \om_{n-1})$ and $\hat \xi'=(\om_0'\xrightarrow{\hat e_1'} \cdots \xrightarrow{\hat e_{n-1}'} \om_{n-1}')$. Our inductive assumption implies that $\phi(\tau_{\calZ}(\hat \al (\hat \xi)))=\phi(\tau_{\calZ}(\hat \al (\hat \xi')))=:x$, so that no hyperplane with label in $S_\calX$ separates $\tau_{\calZ}(\hat \al (\hat \xi))$ and $\tau_{\calZ}(\hat \al (\hat \xi'))$.  Since $\ov\calX$ is special there exists at most one edge in $\calX$ with initial vertex $x$ and dual to a hyperplane with label $\frakh=\hat \pi_\calX(\hat e_{n})= \hat \pi_\calX(\hat e_{n}') \in S_\calX$. But $\phi$ is injective on $\bbW$, so that the hyperplane labeled $\frakh$ that separates $\tau_{\calZ}(\hat \al (\hat \xi))$ and $\tau_{\calZ}(\hat \al (\hat \om))$ in $\calZ$ is the same as the hyperplane labeled $\frakh$ that separates $\tau_{\calZ}(\hat \al (\hat \xi'))$ and $\tau_{\calZ}(\hat \al (\hat \om'))$. Since this is the only hyperplane with a label in $S_\calX$ that separates these pairs, we conclude that every hyperplane separating $\tau_{\calZ}(\hat \al (\hat \om))$ and $\tau_{\calZ}(\hat \al (\hat \om'))$ has a label in $S_\calZ \bs S_\calX$, which gives us $\phi(\tau_{\calZ}(\hat \al (\hat \om)))=\phi(\tau_{\calZ}(\hat \al (\hat \om')))$. Moreover, if $w'\in L_\calX$ is the word represented by $\hat \xi$, then by induction we have $d_\calX(o,\tau_\calX(w'))=n-1$ and $d_\calX(\tau_\calX(w'),\tau_\calX(\xi))=1$. But all these points belong to the projection under $\phi$ of the geodesic $\wtilde \gam_{\hat \al(\hat \om)}$, so Remark \ref{rmk.projquot} implies that $d_\calX(o,\tau_\calX(w))=n$ and concludes the proof by induction, proving (1). 

It is not hard to see that if $w=\frakh_1\cdots \frakh_n\in L_\calX$ then $$\gam_w:=(o,\tau_\calX(\frakh_1),\tau_\calX(\frakh_1\frakh_2),\dots,\tau_\calX(\frakh_1\cdots \frakh_n))$$ is the unique geodesic representing $w$ in $\calX$ and starting at $o$, which settles (2). 

To prove (3), injectivity can be deduced by induction on the length of words in $L_\calX$ combined with the fact that no two distinct edges with the same initial vertex in $\calX$ are dual to hyperplanes with the same label in $S_\calX$. This last statement is true by specialness of $\ov\calX$. 

To prove surjectivity, let $x\in \calX^0$ and consider $\wtilde w\in L_\calZ$ such that $\phi(\tau_\calZ(\wtilde w))=x$. Such an $\wtilde w$ exists because both $\tau_\calZ$ and $\phi$ are surjective. We write $\wtilde w=w_0e_1\cdots e_nw_n,$
where each $e_i$ is a letter in $S_\calX$ and each $w_i$ is a (possibly empty) word in $(S_\calZ \bs S_\calX)^\ast$. 
Then $\wtilde w':=w_0e_1\cdots w_{n-1}e_n$ equals $\hat \al (\hat \om)$ for some path $\hat \om$ in $\hat \calG_\calX$ representing the word $w\in L_\calX$, for which $\tau_\calX(w)=\phi(\tau_\calZ(\wtilde w'))=\phi(\tau_\calZ(\wtilde w))=x$.     
\end{proof}

\begin{lemma}\label{lem.almostdeterministic}
    There exists $C\geq 1$ such that every $w\in L_\calX$ is represented by at most $C$ paths in $\hat \calG_\calX$ starting at an initial vertex.
\end{lemma}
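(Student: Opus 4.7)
The plan is to show that $C = M$ works, where $M$ is the maximum cardinality of a fiber $\phi^{-1}(x) \cap \calZ^0$ over vertices $x \in \calX^0$; this $M$ is finite by the cocompactness of the $\G$-action on $\calZ$, exactly as in the proof of Lemma \ref{lem.hatA_X}. The key idea is that although we may interleave paths labeled in $S_\calZ \setminus S_\calX$ between the letters of $w$ in various ways, each such choice is secretly recorded by the endpoint of the associated lift in $\calZ$.

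I will proceed as follows. Suppose $\hat \om \neq \hat \om'$ are two paths in $\hat\calG_\calX$ starting at initial states and both representing $w \in L_\calX$. First I would verify that the concatenations $c(\hat \om), c(\hat \om')$ are distinct paths in $\calG_\calZ$: since each $\om_i$ is itself a path in $\calG_\calZ$ and distinct vertices of $\hat V_\calX$ are distinct as such paths, and since the choice of edges $\hat e_i \in \hat E_\calX$ determines edges $e_i \in E_\calZ$ sitting inside the concatenation, any discrepancy between $\hat \om$ and $\hat \om'$ forces a discrepancy between $c(\hat \om)$ and $c(\hat \om')$. Both $c(\hat \om)$ and $c(\hat \om')$ start at $\ast_\calZ$, and because $\calA_\calZ$ is deterministic with unique initial state, distinct paths out of $\ast_\calZ$ have distinct labels. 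Hence $\hat \al(\hat \om) \neq \hat \al(\hat \om')$ as elements of $L_\calZ$, and the bijectivity of $\tau_\calZ: L_\calZ \to \calZ^0$ yields $\tau_\calZ(\hat \al(\hat \om)) \neq \tau_\calZ(\hat \al(\hat \om'))$ in $\calZ$.

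On the other hand, Lemma \ref{lem.tauX}(1) states that both endpoints $\tau_\calZ(\hat \al(\hat \om))$ and $\tau_\calZ(\hat \al(\hat \om'))$ project under $\phi$ to the same vertex $\tau_\calX(w) \in \calX^0$. Thus the map $\hat \om \mapsto \tau_\calZ(\hat \al(\hat \om))$ is an injection from the set of paths in $\hat\calG_\calX$ representing $w$ and starting at an initial state into the fiber $\phi^{-1}(\tau_\calX(w)) \cap \calZ^0$, whose cardinality is at most $M$.

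The main point that needs care is the step asserting injectivity of $\hat \om \mapsto c(\hat \om)$ at the level of paths in $\calG_\calZ$; once this is established, determinism of $\calA_\calZ$ and bijectivity of $\tau_\calZ$ reduce the problem to the uniform bound on preimages of $\phi$ that was already exploited in Lemma \ref{lem.hatA_X}. I expect no real obstacle beyond bookkeeping the vertex/edge data defining $\hat \calA_\calX$.
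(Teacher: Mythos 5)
Your proof has a genuine gap in the step you yourself flag as needing care: the claimed injectivity of $\hat\om \mapsto c(\hat\om)$ is false. Recall from \eqref{eq.conc} that $c(\hat\om) = \om_0 e_1 \cdots \om_{n-1}e_n$ terminates with the edge $e_n$ and \emph{does not include the final vertex $\om_n$}. So if $\hat\om$ and $\hat\om'$ agree in $\om_0, \hat e_1, \dots, \om_{n-1}$ and the underlying edge $e_n\in E_\calZ$ of their last $\hat\calG_\calX$-edges, but differ in the terminal vertex $\om_n\neq \om'_n$, then $c(\hat\om)=c(\hat\om')$ and hence $\hat\al(\hat\om)=\hat\al(\hat\om')$, yet $\hat\om\neq\hat\om'$. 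Such pairs genuinely occur: the final vertex of $e_n$ may admit several outgoing paths in $\calG_\calZ$ whose edges are all labelled in $S_\calZ\setminus S_\calX$ and which terminate as required in the definition of $\hat V_\calX$, giving several admissible choices of $\om_n$ (and correspondingly several edges $\hat e_n$ out of $\om_{n-1}$ sharing the same label). Your sentence ``any discrepancy between $\hat\om$ and $\hat\om'$ forces a discrepancy between $c(\hat\om)$ and $c(\hat\om')$'' overlooks precisely a discrepancy located only in the final vertex.

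Consequently the map $\hat\om \mapsto \tau_\calZ(\hat\al(\hat\om))$ is not injective, and the bound $C=M$ does not follow. The fix, which is what the paper does, is to note that $\hat\om$ is completely determined by the \emph{pair} consisting of $c(\hat\om)$ (equivalently $\hat\al(\hat\om)$, since $\calA_\calZ$ is deterministic and $c(\hat\om)$ starts at $\ast_\calZ$) together with its final vertex $\om_n\in\hat V_\calX$. Since $\hat V_\calX$ is finite by Lemma \ref{lem.hatA_X}, the assignment $\hat\om\mapsto\hat\al(\hat\om)$ is at most $|\hat V_\calX|$-to-one, and then your reduction to the fiber $\phi^{-1}(\tau_\calX(w))$ gives the bound $C\le M\cdot|\hat V_\calX|$ rather than $M$. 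With this correction your argument becomes essentially the paper's.
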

\begin{proof}
Since $\tau_\calZ$ is injective and $\phi$ is uniformly finite-to-one when restricted to vertices, 
it is enough to prove that the assignment $\hat \om \mapsto \hat \al(\hat \om)$ from the paths in $\hat \calG_\calX$ starting at an initial vertex into $L_\calZ$ is uniformly finite-to one. To show this, note that such $\hat \om$ is completely determined by its concatenation $c(\hat \om)$ in $\calG_\calZ$ (defined in \eqref{eq.conc}) and its final vertex in $\hat V_\calX$, and that $c(\hat \om)$ is completely determined by $\hat \al (\hat \om)$ since $\calA_\calZ$ is deterministic. The lemma then follows from Lemma \ref{lem.hatA_X}.
\end{proof}

\begin{proof}[Proof of Proposition \ref{prop.languageL_X}]

First we note that $\ov\calX^1$ can be seen as the finite-state automaton 
$$\ov\calX^1=((\ov\calX^0,S_\calX),\textnormal{Id}_{S_\calX},\{\ov o\}, \ov\calX^0)$$
over $S_\calX$, where $\ov o$ is the image of $o$ under the quotient $\calX \ra \ov\calX$. This automaton is deterministic since $\ov\calX$ is special. 


Let $\calA_\calX=(\calG_\calX=(V_\calX,E_\calX),\pi_\calX,I_\calX,V_\calX)$ be the fiber product of $\hat \calA_\calX$ and $\ov\calX^{1}$. That is, in $\hat V_\calX \times \ov\calX^0$ consider a directed edge from $(\hat \omega, \ov x)$ to $(\hat \omega',\ov x ')$ if there exists an edge $\hat e$ from $\omega$ to $\omega'$ such that $\hat \calG_\calX$ and $\hat \pi_\calX( \hat e)$ is the directed hyperplane dual to an edge from $\ov x$ to $\ov x'$ in $\ov \calX$ (so that $\ov x, \ov x'$ must be adjacent). By abuse of notation we will call this edge $e$ and define $\pi_\calX(e):=\hat\pi_\calX(\hat e)$. Let $I_\calX=\hat I_\calX \times \{\ov o\}$ be the set of initial states of $\calG_\calX$ and let $V_\calX \subset \hat V_\calX \times \ov \calX^0$ be the set of all the vertices in some directed path in $\hat V_\calX \times \ov \calX^0$ starting at an initial state. Let $E_\calX$ be the set of all the directed edges between vertices in $V_\calX$ as defined above. Clearly $\calA_\calX$ is pruned and finite.

There exists a label-preserving map $\mathfrak{p}$ from the set of paths in $\calG_\calX$ starting at an initial state into the set of paths in $\hat \calG_\calX$ starting at an initial state, which sends the path $((\om_0,\ov x_0)\xrightarrow{e_1} \cdots \xrightarrow{e_n} (\om_n,\ov x_n ))$ to the path $\hat \om=(\om_0\xrightarrow{\hat e_1} \cdots \xrightarrow{\hat e_n} \om_n)$. The map $\mathfrak{p}$ is a bijection since the sequence $\ov x_0,\dots,\ov x_n$ is the image of $\gam_w$ under $\calX \ra \ov\calX$, where $w$ is the word represented by $\hat \om$. In particular, the language parametrized by $\calA_\calX$ is precisely $L_\calX$, so Lemma \ref{lem.almostdeterministic} implies Item (1). Also, Items (2) and (3) follow from Lemma \ref{lem.tauX}. 

For Item (4) we consider the map $\Psi:V_\calX \ra (S_\calZ \bs S_\calX)^\ast$ that sends the vertex $(\om,\ov x)$ to the word represented by $\om$, seen as a path in $\calG_\calZ$. From the definition of $\hat \calG_\calX$ it is clear that for the path $\om =(v_0\xrightarrow{e_1} \cdots \xrightarrow{e_n}v_n)$ in $\calG_\calX$ starting at an initial state, the concatenation
$$\al(\om):=\hat \al(\mathfrak{p}(\om))=\Psi(v_0)\pi_\calX(e_1)\Psi(v_1) \cdots \pi_\calX(e_n)$$ belongs to $L_\calZ$, so it is represented by the geodesic path $\wtilde \gam _{\al(\om)}$ in $\calZ$ starting at $\wtilde o$. The word $w$ represented by $\om$ is also represented by $\mathfrak{p}(\om)$, so Lemma \ref{lem.tauX} implies that $\tau_\calX(w)=\phi(\tau_\calZ(\al(\om)))$. $\calA_\calZ$ being deterministic implies that $\wtilde \gam_{\al(\om)}$ is the unique geodesic in $\calZ$ starting at $\wtilde o$ and representing $w$.

Finally, we define $\Xi:V_\calX \ra \ov\calX^0$ as the coordinate projection $\Xi(\om, \ov x)=\ov x$, and the same argument for the proof that $\mathfrak{p}$ is a bijection implies Item (5). 
\end{proof}



\section{Proof of the main theorems}\label{sec.thmscub}

In this section we prove our main results about pairs of actions on $\CAT(0)$ cube complexes from the introduction. They are consequences of more general statements, given by Theorems  \ref{thm.manhattanfactsX} and \ref{thm.cubulation}. The strategy is to use the automaton from Proposition \ref{prop.languageL_X} to define an appropriate suspension flow, and then use Proposition \ref{prop.manattanpressure} to relate the Manhattan curves with pressure functions for potentials on this suspension. This relation will allow us to use the tools from symbolic dynamics and thermodynamic formalism discussed in Section \ref{sec.ld} to deduce our main results.

The following are the main theorems of the section, and they are proven in Section \ref{subsec.analytic+LDcub}. 
For their statements, we interpret the quantities $\Dil(\calX,\calX_\ast)^{-1}$ and $v_{\calX^\frakw}/v_{\calX^{\frakw_\ast}_\ast}$ as zero if the action of $\G$ on $\calX_\ast$ is not proper.

\begin{theorem}\label{thm.manhattanfactsX}
Let $(\G,\calX,\calX_\ast) \in \frakX$ and let $\frakw,\frakw_\ast$ be $\G$-invariant orthotope structures on $\calX$ and $\calX_\ast$ respectively.  
Then the Manhattan curve $\thet_{\calX^{\frakw_\ast}_\ast/\calX^\frakw}:\R \ra \R$ is convex, decreasing, and analytic. In addition, the following limit exists and equals $-\thet'_{\calX^{\frakw_\ast}_\ast/\calX^\frakw}(0)$:
\[\tau(\calX^{\frakw_\ast}_\ast/\calX^\frakw):= \lim_{T\to\infty} \frac{1}{\# \frakC_{\calX^{\frakw}}(T)} \sum_{[g]\in \frakC_{\calX^\frakw}(T)} \frac{\ell^{\frakw_\ast}_{\calX_\ast}[g]}{\ell^\frakw_{\calX}[g]}.
    \]
Moreover, we always have
\[
\tau(\calX^{\frakw_\ast}_\ast/\calX^\frakw)\geq v_{\calX^\frakw}/v_{\calX^{\frakw_\ast}_\ast}.
\]
If the action of $\G$ on $\calX_\ast$ is proper then the following are equivalent:
\begin{enumerate}
    \item $\theta_{\calX^{\frakw_\ast}_\ast/\calX^\frakw}$ is a straight line;
    \item there exists $\Lam >0$ such that $\ell^\frakw_\calX[g] = \Lam  \ell^{\frakw_\ast}_{\calX_\ast}[g]$ for all $[g] \in \conj(\G)$; and,
    \item $\tau(\calX^{\frakw_\ast}_\ast/\calX^\frakw) = v_{\calX^\frakw}/v_{\calX^{\frakw_\ast}_\ast}$.
\end{enumerate}
\end{theorem}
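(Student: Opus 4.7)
The plan is to use the automaton $\calA_\calX$ from Proposition \ref{prop.languageL_X} to convert everything to symbolic dynamics. Viewing $\calG_\calX$ as the adjacency graph of a one-sided subshift of finite type $(\Sigma,\sigma)$, I would restrict to a mixing (or at least transitive) maximal component $\Sigma_{\calC}$, and use Items (4)--(5) to match loops based at a vertex with combinatorial geodesics in $\calZ$ whose endpoints differ by an element of $\ov \G$. This sets up a uniformly finite-to-one correspondence between periodic orbits of $\sigma$ on $\Sigma_\calC$ and conjugacy classes in $\conj(\ov\G)$, which, since $[\G:\ov\G]<\infty$, tracks $\conj(\G)$ up to bounded error. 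I would then define two edge functionals $r^\frakw_\calX$ and $\psi^{\frakw_\ast}_{\calX_\ast}$ on $E_\calX$: the first assigns to an edge the $\frakw$-weight of the hyperplane labelling it, the second assigns the $\frakw_\ast$-displacement along the corresponding edge in $\calX_\ast$, using that each oriented hyperplane of $\ov\calX$ has a well-defined $\ov\G$-orbit whose stabiliser acts convex-cocompactly on a convex subcomplex of $\calX_\ast$ by Definition \ref{def.classX}(2). Their Birkhoff sums along a loop then recover exactly $\ell^\frakw_\calX[g]$ and $\ell^{\frakw_\ast}_{\calX_\ast}[g]$ for the corresponding conjugacy class.

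Next, I would form the suspension flow $\Sigma_\calC^{r^\frakw_\calX}$ with roof $r^\frakw_\calX$ and lift $\psi^{\frakw_\ast}_{\calX_\ast}$ to a H\"older potential $\Phi$ on it. The existence of a contracting element in $\G$ (Definition \ref{def.classX}(3)) is what allows one to invoke Yang's purely exponential growth machinery and match the topological entropy of $\Sigma_\calC^{r^\frakw_\calX}$ with $v_{\calX^\frakw}$, so that periodic orbit counts on the flow asymptotically count elements of $\frakC_{\calX^\frakw}(T)$. Assuming Proposition \ref{prop.manattanpressure}, the Manhattan curve is then characterized implicitly by the pressure equation $\textnormal{P}_{\sigma^{r^\frakw_\calX}}(-\theta_{\calX^{\frakw_\ast}_\ast/\calX^\frakw}(s)\,r^\frakw_\calX - s\Phi)=0$. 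Real-analyticity of $\theta_{\calX^{\frakw_\ast}_\ast/\calX^\frakw}$ follows by applying the implicit function theorem to the real-analytic two-variable pressure map, convexity is immediate from H\"older's inequality applied to the defining series $\sum e^{-t\ell^\frakw_\calX[g] - s\ell^{\frakw_\ast}_{\calX_\ast}[g]}$, and strict monotonicity from the strict positivity of $\ell^{\frakw_\ast}_{\calX_\ast}$ on contracting conjugacy classes.

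For the identification $\tau(\calX^{\frakw_\ast}_\ast/\calX^\frakw)=-\theta'_{\calX^{\frakw_\ast}_\ast/\calX^\frakw}(0)$, I would differentiate the implicit equation above at $s=0$ and apply the pressure derivative formula together with equidistribution of periodic orbits with respect to the measure of maximal entropy $m$ of the suspension, so that the average of $\ell^{\frakw_\ast}_{\calX_\ast}/\ell^\frakw_\calX$ converges to $\int\Phi\,dm / \int r^\frakw_\calX\,dm = -\theta'(0)$. When the action on $\calX_\ast$ is proper the curve passes through $(0,v_{\calX^\frakw})$ and $(v_{\calX^{\frakw_\ast}_\ast},0)$, so convexity forces $\theta'(0)\leq -v_{\calX^\frakw}/v_{\calX^{\frakw_\ast}_\ast}$, giving the inequality. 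For the equivalences, $(2)\Rightarrow(1)$ is a direct substitution in the Manhattan series, $(1)\Rightarrow(3)$ is immediate from the two endpoints of the curve, and $(1)\Rightarrow(2)$ is the content of the argument: a linear Manhattan curve forces $\Phi - \Lambda^{-1}r^\frakw_\calX$ to be cohomologous to zero on the suspension, hence the associated edge cocycle is cohomologically trivial, and Livsic's theorem gives $\ell^\frakw_\calX[g]=\Lambda\ell^{\frakw_\ast}_{\calX_\ast}[g]$ for every conjugacy class induced by a loop; one then extends this equality from $\conj(\ov\G)$ to $\conj(\G)$ using the finite-index condition and the surjectivity statement in Proposition \ref{prop.languageL_X}.

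The main obstacle I anticipate is the construction and well-definedness of $\psi^{\frakw_\ast}_{\calX_\ast}$ as an edge functional of $\calG_\calX$: the action on $\calX_\ast$ is neither proper nor (a priori) encoded by the same combinatorial data as $\calX$, so one has to use the compatibility built into Lemma \ref{lem.preparations} and the convex-cocompactness of hyperplane stabilisers to produce a consistent assignment of $\frakw_\ast$-displacements to oriented hyperplanes of $\ov\calX$. A secondary technical point is ensuring that the error in identifying periodic orbits with conjugacy classes (rather than merely $\ov\G$-conjugacy classes) is subexponential, so that it is invisible to the pressure formalism; this is exactly the place where the contracting element assumption is indispensable.
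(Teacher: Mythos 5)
Your plan follows the paper's actual strategy very closely: build the automaton $\calA_\calX$, pass to a transitive maximal component $\Sigma_\calC$, form the suspension over the roof $r^\frakw_\calX$, lift $\psi^{\frakw_\ast}_{\calX_\ast}$ to a potential $\Phi$, identify the Manhattan curve with a pressure function (your implicit formulation $\textnormal{P}(-\theta(s)r-s\psi)=0$ on the base is equivalent, via Abramov, to the paper's $\theta(s)=\textnormal{P}_\calC(-s\Phi)$ on the suspension), and read off analyticity, convexity, and $\tau=-\theta'(0)$ from standard thermodynamic formalism. The identification $h=v_{\calX^\frakw}$ via the contracting element (the paper's Lemma \ref{lem.upperdensity} and Yang's counting theorem) and the endpoint argument for the inequality are also as in the paper.

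The genuine gap is in your closing step of $(1)\Rightarrow(2)$. Cohomology plus Livsic only gives $\ell^\frakw_\calX[q]=\Lambda\ell^{\frakw_\ast}_{\calX_\ast}[q]$ for conjugacy classes $[q]\in\conj(\ov\G)$ that lie in the image of $\beta$ restricted to closed paths in the maximal component $\calC$. You claim the finite-index condition and the surjectivity clause of Proposition \ref{prop.languageL_X} extend this to all of $\conj(\G)$, but surjectivity there concerns the orbit map $\tau_\calX\colon L_\calX\to\calX^0$, not surjectivity of $\beta$ onto $\conj(\ov\G)$; in general $\beta$ misses many conjugacy classes because the corresponding axes need not pass near the base vertex, and restricting to $\calC$ loses more still. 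The paper fills this via Lemma \ref{lem.goodpath}: using the contracting element it produces, for every non-torsion $[g]\in\conj(\G)$, a closed path $\om_{[g]}\in P(\Sig^\ast_\calC)$ with $\max\{|\ell^\frakw_\calX[g]-\ell^\frakw_\calX[\beta(\om_{[g]})]|,|\ell^{\frakw_\ast}_{\calX_\ast}[g]-\ell^{\frakw_\ast}_{\calX_\ast}[\beta(\om_{[g]})]|\}\le C'$. Combined with the periodic-orbit identity this yields $|\ell^\frakw_\calX[g]-\Lambda\ell^{\frakw_\ast}_{\calX_\ast}[g]|\le C$ uniformly, and exact equality then follows by applying this to $g^n$ and using the homogeneity $\ell[g^n]=n\,\ell[g]$. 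So the contracting element is not merely making an overcounting error subexponential, as you suggest; it is what guarantees every conjugacy class is uniformly approximated by a periodic orbit, which is the actual content needed to leave the symbolic world and reach $\conj(\G)$.
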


\begin{theorem}\label{thm.cubulation}
Let $(\G,\calX,\calX_\ast)\in \frakX$. Then there exists an analytic function $$\mathcal{I}:[\Dil(\calX,\calX_\ast)^{-1}, \Dil(\calX_\ast,\calX)]\ra \R$$ and $C>0$ such that for any $\eta \in (\Dil(\calX,\calX_\ast)^{-1}, \Dil(\calX_\ast,\calX))$ we have
    \begin{equation}\label{eq.LDcubX}
   0 <  \limsup_{T\to\infty} \frac{1}{T} \log\left( \#\left\{ [g] \in \mathfrak{C}_\calX(T): | \ell_{\calX_\ast}[g]  - \eta \ell_{\calX}[g] | < \frac{C}{T} \right\}\right) = \calI(\eta) \le v_\calX.
   \end{equation}
 Furthermore, we have equality in the above inequality if and only if $\eta =\tau(\calX_*/\calX)$.
\end{theorem}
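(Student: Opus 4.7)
The plan is to deduce Theorem \ref{thm.cubulation} from the symbolic-dynamical large-deviation statement Theorem \ref{thm.ldtsi} by encoding both actions via the automaton $\calA_\calX$ of Proposition \ref{prop.languageL_X}, following the same blueprint as in the proof of Theorem \ref{thm.wm}. From the finite directed graph $\calG_\calX$ I form a subshift of finite type $(\Sigma_A, \sigma)$ whose admissible sequences realize infinite directed paths in $\calG_\calX$. The edge functionals $r_\calX$ and $\psi_{\calX_\ast}$ described in the bullets preceding Section 1.2 lift to functions $r, \psi \colon \Sigma_A \to \R$ that are constant on $2$-cylinders; with the standard combinatorial metric on $\calX$ one has $r \equiv 1$, and $\psi$ takes values in $\tfrac{1}{2}\Z$, hence is lattice, since cubical translation lengths always lie in this lattice. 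By Proposition \ref{prop.languageL_X}, every loop in $\calG_\calX$ represents a conjugacy class $[g] \in \conj(\ov\G)$, and on the corresponding periodic orbit $x$ of period $n = \ell_\calX[g]$ one has $\psi^n(x) = \ell_{\calX_\ast}[g]$.

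Next I would pass to a maximal irreducible component $\Sigma_\calC \subset \Sigma_A$ of topological entropy equal to $v_\calX$. The key input here is a polynomial overcount: there exists a polynomial $P$ such that each $[g] \in \conj(\ov\G)$ with $\ell_\calX[g] = n$ corresponds to at most $P(n)$ periodic orbits $x \in \Sigma_\calC$ with $\sigma^n(x) = x$. This is the cubical analogue of \cite[Lem.~4.2]{cantrell.reyes.2} and relies on Item (1) of Proposition \ref{prop.languageL_X} (the uniform bound on representing paths) together with the contracting-element hypothesis, which prevents exponential overcounting arising from long self-overlapping geodesic representatives. Since $\ov\G$ has finite index in $\G$, the same polynomial bound relates periodic orbits on $\Sigma_\calC$ to elements of $\conj(\G)$, up to taking powers to land in $\ov\G$.

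By Remark \ref{rem.transitive}, Theorem \ref{thm.ldtsi} applies to $(\Sigma_\calC, \sigma)$ with the lattice potential $\psi$: there is a uniform $C_0 > 0$ such that for every $\eta$ in the interior $(\al_{\min}, \al_{\max})$ of the range of averaged Birkhoff sums of $\psi$ over periodic orbits,
\[
\limsup_{n\to\infty} \frac{1}{n} \log \#\left\{x \in \Sigma_\calC \colon \sigma^n(x) = x,\ |\psi^n(x) - \eta n| < C_0/n\right\} = v_\calX - \calL(\psi, \eta).
\]
Translating periodic orbits to conjugacy classes using the polynomial overcount and setting $T = n$ transforms the above into the estimate \eqref{eq.LDcubX} with rate function $\calI(\eta) := v_\calX - \calL(\psi, \eta)$; the polynomial factor is absorbed into a mild enlargement of $C_0$ to produce the constant $C$. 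Analyticity of $\calI$ on the interior follows from the standard analyticity of pressure for H\"older potentials on (transitive) subshifts. Lemma \ref{lem.achieve}, combined with the fact that the extremal ratios of translation lengths over non-torsion conjugacy classes in $\G$ are attained on $\conj(\ov\G)$ after passing to suitable powers, identifies $\al_{\min} = \Dil(\calX,\calX_\ast)^{-1}$ and $\al_{\max} = \Dil(\calX_\ast,\calX)$. Strict positivity $\calI(\eta) > 0$ in the interior is the variational-principle observation that some invariant measure on $\Sigma_\calC$ of positive entropy has $\int \psi\,d\mu = \eta$, which holds because $\psi$ is not cohomologous to a constant (unless the dilation interval collapses). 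Finally, Proposition \ref{prop.manattanpressure} identifies $\calL(\psi, \cdot)$ (up to affine change of variable) with the Legendre transform of the Manhattan curve $\theta_{\calX_\ast/\calX}$, so the maximum of $\calI$ equals $v_\calX$ and is attained uniquely at $\eta = -\theta'_{\calX_\ast/\calX}(0) = \tau(\calX_\ast/\calX)$ by Theorem \ref{thm.manhattanfactsX}.

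The main technical obstacle is establishing the polynomial overcount between periodic orbits on $\Sigma_\calC$ and conjugacy classes in $\conj(\ov\G)$. Without the contracting-element assumption this bound can fail, and its failure would invalidate the identification $h(\Sigma_\calC) = v_\calX$ needed for the upper bound $\calI(\eta) \leq v_\calX$ to be sharp. The argument should follow \cite[Lem.~4.2]{cantrell.reyes.2} closely, using the contracting element to control the cyclic-shift multiplicity of loops in $\calG_\calX$ representing a given conjugacy class.
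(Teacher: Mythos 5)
Your proposal follows the paper's proof essentially step for step: encode via $\calA_\calX$, pass to a maximal transitive component $\Sigma_\calC$, use the polynomial overcount together with Theorem \ref{thm.ldtsi} to transfer the symbolic large-deviation estimate to conjugacy classes, and identify the rate function with the Legendre transform of $\theta_{\calX_\ast/\calX}$ via Proposition \ref{prop.manattanpressure}.

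Two small inaccuracies are worth flagging, though neither derails the argument. First, you attribute the polynomial overcount (comparing periodic orbits on $\Sigma_\calC$ to conjugacy classes) to the contracting-element hypothesis, but in the paper's Lemma \ref{lem.polyn} this bound is a purely combinatorial consequence of co-specialness (reducing to word-length estimates in a right-angled Artin group); the contracting element enters instead in Lemma \ref{lem.upperdensity}, via \cite[Thm.~1.8~(2)]{yang}, to prove $h(\Sigma_\calC) = v_\calX$, and in Lemma \ref{lem.goodpath} to show that $\Sigma_\calC$ sees essentially every conjugacy class. Second, your ad-hoc identification of $\alpha_{\min},\alpha_{\max}$ with $\Dil(\calX,\calX_\ast)^{-1},\Dil(\calX_\ast,\calX)$ via Lemma \ref{lem.achieve} plus ``taking powers'' is not enough on its own: a priori the Birkhoff ratios realized on the chosen maximal component $\Sigma_\calC$ could span a strictly smaller interval than the full dilation range, and passing to powers of $g$ does not by itself produce periodic orbits in $\Sigma_\calC$ with the right label. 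The identification you want is carried by Lemma \ref{lem.goodpath} (or, equivalently, by the Manhattan-curve identity of Proposition \ref{prop.manattanpressure} and Corollary \ref{coro.ldmanhattan}), which you do ultimately invoke, so the endpoint identification is redundant rather than wrong, but as stated it would not stand alone. Finally, $\psi$ takes values in $\Z$, not $\tfrac12\Z$ (the $\tfrac12\Z$ remark in the introduction concerns the barycentric subdivision), though this does not affect the lattice argument.
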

As we noted in the previous section, $(\G,\calX,\calX_\ast)\in \frakX$ whenever $\G\in \frakG$ and $\calX,\calX_\ast$ are cubulations of $\G$. From this we see that Theorems \ref{thm.manhattanfactsX} and  \ref{thm.cubulation} imply Theorems \ref{thm.manhattanfactsG} and \ref{thm.cubulation.Gversion} from the introduction.


\subsection{Manhattan curves for pairs of cubulations}

In this section we use the finite-state automaton given by Proposition \ref{prop.languageL_X} to describe the Manhattan geodesics for a pair of cubulations in terms of pressure functions. This will allow us to use thermodynamic formalism to prove our main results in the next section. For this section we keep the notation from the previous section and assume the following.

\begin{convention}\label{conv.X_ast}
Let $(\G,\calX,\calZ)$ be a triple satisfying Convention \ref{conv.restriction}. Consider a non-empty $\G$-invariant subset $\bbW_\ast\subset \bbH(\calZ)$ and set $\calX_\ast=\calZ(\bbW_\ast)$. Then the action of $\G$ on $\calX_\ast$ is cocompact and essential, but not necessarily proper. Let $\frakw$ and $\frakw_\ast$ be $\G$-invariant orthotope structures on $\calX$ and $\calX_\ast$ respectively. We also require the action of $\G$ on $\calX$ to have a contracting element. 

Let $S_{\calX_\ast}\subset S_\calZ$ be the set of all the directed hyperplanes in $\ov\calZ$ whose lifts to $\calZ$ correspond to hyperplanes in $\bbW_\ast$, and let $\phi_\ast:\calZ \ra \calX_\ast$ be the projection quotient with $o_\ast=\phi_\ast(\wtilde o)$. 
\end{convention}



Since the structures $\frakw,\frakw_\ast$ are $\G$-invariant, there exist natural weighting maps $\ov\frakw, \ov\frakw_\ast:S_{\calZ} \ra \R$
defined as follows. If $\frakh\in S_\calX\subset S_\calZ$ then $\ov \frakw(\frakh)=\frakw( \wtilde \frakh)$ for any lift of $\wtilde \frakh$ in $\calX$, and $\ov\frakw(\frakh)=0$ if $\frakh\in S_\calZ \bs S_\calX$. Analogously, if $\frakh\in S_{\calX\ast}\subset S_\calZ$ then $\ov \frakw_\ast(\frakh)=\frakw_\ast( \wtilde \frakh)$ for any lift of $\wtilde \frakh$ in $\calX_\ast$, and $\ov\frakw_\ast(\frakh)=0$ if $\frakh\in S_\calZ \bs S_{\calX_\ast}$. By abuse of notation we extend these weightings to $\ov \frakw, \ov \frakw_\ast: (S_\calZ)^\ast\ra \R$
by declaring the empty word to have weights 0 and assigning 
\[ \ov\frakw (\frakh_1\cdots \frakh_n)= \ov\frakw(\frakh_1)+\cdots + \ov\frakw(\frakh_n) \ \text{ and } \ \ov\frakw_\ast(\frakh_1\cdots \frakh_n)= \ov\frakw_\ast(\frakh_1)+\cdots + \ov\frakw_\ast(\frakh_n) \]
for a word $\frakh_1\cdots \frakh_n\in (S_\calZ)^\ast$ of positive length.

Let $\Sig^\ast$ be the set of all the finite directed paths in $\calG_\calX$. We can see $\Sig^\ast$ as a set of finite sequences of edges in $E_\calX$. Similarly, let $\Sig\subset (E_\calX)^{\N}$ be the set of all the semi-infinite directed paths $(e_i)_{i\geq 1}$ in $\calG_\calX$. We also let $\sig:\Sig \ra \Sig$ denote the shift map $\sig((e_i)_{i\geq 1})=(e_{i+1})_{i\geq 1}$. For each $n$ we let $P_n(\Sig^\ast)\subset \Sig^\ast$ be the subset of all the closed paths of length $n$, and set $P_{\leq n}(\Sig^\ast)=\bigcup_{j\leq n}{P_j(\Sig^\ast)}$ and $P(\Sig^\ast)=\bigcup_{j}{P_j(\Sig^\ast)}$. We will often identify the set $\Fix_n(\Sig)$ of sequences $\om\in \Sig$ satisfying $\sig^n(\om)=\om$ with $P_n(\Sig^\ast)$ via the truncation $\om=(e_1,\dots,e_n,e_1,\dots) \mapsto t_n(\om):=(e_1,\dots,e_n)$.

The next definition will be useful for the rest of the section.

\begin{definition}
    A combinatorial path $\gam$ in $\calX$ is a \emph{good representative} of $\om \in \Sig^\ast$ if there exists a path $\om_0\in \Sig^\ast$ starting at an initial state and ending at the initial vertex of $\om$ and satisfying the following. If $w_0, w_0w\in L_\calX$ are the words corresponding to $\om_0$ and $\om_0 \om$ respectively, then $\gam$ is the portion of the path $\gam_{w_0w}$ representing $w_0w$ from $\gam^-=\tau_\calX(w_0)$ to $\gam^+=\tau_\calX(w_0w)$. 
\end{definition}

Note that good representatives are geodesic. Also, by Proposition \ref{prop.languageL_X} (5) any two good representatives of the same path in $\Sig^\ast$ differ by a translation by an element in $\ov\G$. In consequence, if $\om\in P(\Sig^\ast)$ then there exists a well-defined conjugacy class $\beta(\om)\in \conj(\ov\G)$ represented by any $g\in \ov\G$ such that $\gam^+=g\gam^-$ for $\gam$ a good representative of $\om$. Clearly $\om\in P_n(\Sig^\ast)$ implies $\ell_\calX[\beta(\om)]=n$. 



We can consider lifts of paths in $\Sig^\ast$ to $\calZ$. First, we extend the equation \eqref{eq.concatenation} to define a map $\al:\Sig^\ast \ra (S_\calZ)^\ast$. If $\gam$ is a good representative of $\om \in \Sig^\ast$ defined using the path $\om_0$ as above, then $\wtilde \gam$ is the portion of $\wtilde \gam_{\al(\om_0\om)}$ starting at $\wtilde \gam_{\al(\om_0)}^+$ and ending at $\wtilde \gam_{\al(\om_0\om)}^+$, where $\wtilde \gam_{\al(\om_0)}$ and $\wtilde \gam_{\al(\om_0\om)}$ are given by Proposition \ref{prop.languageL_X} (4). In this way the path $\wtilde \gam$ represents the word $\al(\om)$. Different choices of $\om_0,\om_0'$ may give different lifts $\wtilde \gam, \wtilde \gam'$ even if $\tau_\calX(w_0)=\tau_\calX(w_0')$, but under this assumption we have $\phi(\wtilde \gam)=\phi(\wtilde \gam') =\gam$. 


A key feature of the automaton $\calA_\calX$ is that it keeps track of translation lengths for the actions of $\G$ on the cuboid complexes $\calX^\frakw$ and $\calX_\ast^{\frakw_\ast}$, via the potential on $(\Sig,\sig)$ defined below. 

\begin{definition}\label{def.psicalX}
    Let $r=r^\frakw_\calX,\psi=\psi^{\frakw_\ast}_{\calX_\ast}: E_\calX \ra \R$ be the functions such that 
    \begin{equation}\label{eq.defpsi_X}
r(e)=\ov\frakw(\pi_\calX(e)) \ \text{ and } \ \psi(e)=\psi( v_0\xrightarrow{e} v_1)=
\ov\frakw_\ast(\Psi(v_0))+\ov\frakw_\ast(\pi_\calX(e)),
\end{equation}
where $\Psi$ is the function from Proposition \ref{prop.languageL_X} (4). By abuse of notation we extend these functions to potentials $r,\psi:\Sig \ra \R$ via
\[r(e_1,e_2,\dots)=r(e_1) \ \text{ and } \ \psi(e_1,e_2,\dots)=\psi(e_1).\]
\end{definition}


Clearly $r$ and $\p$ are constant on 2 cylinders, and $r$ is positive. The next lemma can be seen as a weak analog of \cite[Lem.~3.8]{calegari-fujiwara} for pairs of word metrics on hyperbolic groups.

\begin{lemma}\label{lem.psiell_ast}
    Let $r,\p:\Sig \ra \R$ be the potentials defined above. If $n\geq 1$ and $\omega\in \Fix_n(\Sig)$ has truncation $t_n(\omega)\in P_n(\Sig^\ast)$, then $\ell_\calX[\beta(t_n(\omega))]=n$ and the $n$th Birkhoff sums at $\om$ satisfy
    \begin{equation}\label{eq.ell_X}
r^n(\omega)=r(\omega)+r(\sig(\omega))+\cdots +r(\sig^{n-1}(\omega))=\ell^{\frakw}_{\calX}[\beta(t_n(\omega))]
    \end{equation}   
    and 
     \begin{equation}\label{eq.ell_Xast}
\psi^n(\omega)=\psi(\omega)+\psi(\sig(\omega))+\cdots +\psi(\sig^{n-1}(\omega))=\ell^{\frakw_\ast}_{\calX_\ast}[\beta(t_n(\omega))].
    \end{equation}   
\end{lemma}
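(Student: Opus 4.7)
The first assertion $\ell_\calX[\beta(t_n(\om))]=n$ is the content of the observation immediately preceding the lemma, so it remains only to establish the two Birkhoff-sum identities. I would begin by observing, via direct computation from Definition \ref{def.psicalX}, that
\[
r^n(\om)=\ov\frakw(\al(t_n(\om))) \quad \text{and} \quad \psi^n(\om)=\ov\frakw_\ast(\al(t_n(\om))),
\]
where $\al$ is the map of Proposition \ref{prop.languageL_X}(4). The first equality follows because $\ov\frakw$ vanishes on $S_\calZ \ssm S_\calX$; the second follows by grouping each summand $\psi(v_{i-1}\xrightarrow{e_i}v_i)=\ov\frakw_\ast(\Psi(v_{i-1}))+\ov\frakw_\ast(\pi_\calX(e_i))$ and recognising the concatenated word as $\al(t_n(\om))$. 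Thus the lemma reduces to identifying $\ov\frakw(\al(t_n(\om)))$ with $\ell^\frakw_\calX[g]$ and $\ov\frakw_\ast(\al(t_n(\om)))$ with $\ell^{\frakw_\ast}_{\calX_\ast}[g]$ for $[g]=\beta(t_n(\om))$.

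For the first identification I would iterate good representatives inside $\calX$. Fixing a good representative $\gam$ of $t_n(\om)$ with $\gam^+=g\gam^-$ for some $g\in \ov\G$, the $\ov\G$-invariance of the hyperplane labelling combined with the specialness of $\ov\calX$ (at most one edge at each vertex bears a given label) forces the good representative of $\om^k$ based at $\gam^-$ to coincide with the concatenation $\gam \cdot g\gam \cdots g^{k-1}\gam$, ending at $g^k\gam^-$. Since this concatenation is geodesic in $(\calX,d_\calX)$ as a portion of $\gam_{w_0 w^k}$, and hence in $(\calX,d^\frakw_\calX)$ by Remark \ref{rmk.geodorthotope}, it realises $d^\frakw_\calX(\gam^-,g^k\gam^-)=k r^n(\om)$. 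Dividing by $k$ and letting $k\to\infty$ in the definition of stable translation length yields $\ell^\frakw_\calX[g]=r^n(\om)$.

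For the second identification I would lift the argument to $\calZ$ and then project to $\calX_\ast$. By Proposition \ref{prop.languageL_X}(4), the word $\al(\om_0\om^k)=\al(\om_0)\al(\om)^k$ is represented by a unique geodesic in $\calZ$ starting at $\wtilde o$; let $\wtilde\gam_k$ denote its terminal portion representing $\al(\om)^k$, whose starting vertex $\wtilde p_0:=\wtilde\gam_k^-$ is independent of $k$. The previous paragraph shows $\phi(\wtilde\gam_k)=\gam\cdot g\gam\cdots g^{k-1}\gam$, so by Remark \ref{rmk.projquot} the projection $\phi_\ast(\wtilde\gam_k)$ is a combinatorial geodesic in $\calX_\ast$ whose $d^{\frakw_\ast}_{\calX_\ast}$-length equals $k\ov\frakw_\ast(\al(\om))=k\psi^n(\om)$. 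Setting $y:=\phi_\ast(\wtilde p_0)$, we obtain $d^{\frakw_\ast}_{\calX_\ast}(y,\phi_\ast(\wtilde\gam_k^+))=k\psi^n(\om)$.

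The remaining step, and the main technical obstacle, is to bound $d^{\frakw_\ast}_{\calX_\ast}(\phi_\ast(\wtilde\gam_k^+),g^k y)$ uniformly in $k$. Both $\wtilde\gam_k^+$ and $g^k\wtilde p_0$ project to $g^k\gam^-\in \calX$ under $\phi$, so they lie in a common $\phi$-fibre. Specialness of $\ov\calZ$ and $\ov\calX$ implies that $\ov\G$ acts freely on both $\calZ$ and $\calX$, so each $\phi$-fibre maps bijectively onto a fibre of the induced map $\ov\phi:\ov\calZ \to \ov\calX$; these latter fibres are finite in number and finite in size since $\ov\calZ$ is a finite cube complex. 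Because each $\phi$-fibre is an intersection of halfspaces of $\calZ$, hence a convex and therefore connected subcomplex, the uniform bound on vertex counts forces a uniform bound $D$ on $d_\calZ$-diameters of fibres. Therefore $d^{\frakw_\ast}_{\calX_\ast}(\phi_\ast(\wtilde\gam_k^+),g^k y)\leq D\cdot \max_{\frakh\in \bbH(\calX_\ast)}\frakw_\ast(\frakh)=:D'$, and the triangle inequality gives $|d^{\frakw_\ast}_{\calX_\ast}(y,g^k y)-k\psi^n(\om)|\leq D'$. Dividing by $k$ and taking $k\to\infty$ produces $\ell^{\frakw_\ast}_{\calX_\ast}[g]=\psi^n(\om)$, completing the proof.
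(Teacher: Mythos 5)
Your proof is correct and follows essentially the same route as the paper: iterate good representatives $\gam^{(k)}$ in $\calX$ to obtain \eqref{eq.ell_X}, then lift to $\calZ$, project via $\phi_\ast$, and control the discrepancy between $\phi_\ast(\wtilde\gam_k^+)$ and $g^k y$ to obtain \eqref{eq.ell_Xast}. The only real difference is in how you bound that discrepancy: you invoke specialness to get freeness of the $\ov\G$-action, then argue through finiteness and convexity of $\phi$-fibres, whereas the paper uses more directly that $\phi$ is a $\G$-equivariant quasi-isometry (so $d_\calZ(x,y)$ is uniformly bounded whenever $\phi(x)=\phi(y)$), and in fact the uniform fibre cardinality bound $M$ is already established in the proof of Lemma \ref{lem.hatA_X}; your longer detour is valid but not necessary.
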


We will need the next lemma, which follows immediately from Remarks \ref{rmk.geodorthotope} and \ref{rmk.projquot}.

\begin{lemma}\label{lem.geodesictogeodesic}
    Let $\gam$ be a $g$-invariant geodesic in $\calZ$ for some $g\in \G$. Then the image of $\gam$ under $\phi$ (resp. $\phi_\ast$) in $\calX$ (resp.  $\calX_\ast$) is a (possibly non-parametrized) geodesic. For $\calX_\ast$ we allow the degenerate case that $\phi_\ast (\gam)$ is a point (which happens if and only if $\ell_{\calX_\ast}[g]=0$).
\end{lemma}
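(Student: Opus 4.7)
The plan is to deduce the lemma from Remark~\ref{rmk.projquot} together with a short analysis of when $\phi_\ast(\gam)$ collapses completely. Since $\phi$ and $\phi_\ast$ are restriction quotients, Remark~\ref{rmk.projquot} immediately yields that $\phi(\gam)$ and $\phi_\ast(\gam)$ are (possibly non-parametrized) combinatorial geodesics in $\calX$ and $\calX_\ast$ respectively. Concretely, an edge of $\gam$ collapses under $\phi_\ast$ if and only if its dual hyperplane does not belong to $\bbW_\ast$, and the surviving edges are dual to pairwise distinct hyperplanes of $\calX_\ast$, because consecutive edges of $\gam$ are dual to distinct hyperplanes of $\calZ$ and the restriction quotient induces a bijection between $\bbW_\ast$ and $\bbH(\calX_\ast)$. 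The same reasoning handles $\phi$.

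For the equivalence characterizing the degenerate case, I will argue both directions. If $\phi_\ast(\gam)$ is a single vertex $v \in \calX_\ast^0$, then $\G$-equivariance of $\phi_\ast$ combined with $g\gam = \gam$ forces $gv = v$, hence $\ell_{\calX_\ast}[g] = 0$. Conversely, suppose $\phi_\ast(\gam)$ contains at least one non-collapsed edge, so that some edge $e \subset \gam$ is dual to a hyperplane $\frakh \in \bbW_\ast$. By $g$-invariance of $\gam$, every translate $g^n e$ is still an edge of $\gam$, dual to $g^n \frakh \in \bbW_\ast$; since $\gam$ is a geodesic in $\calZ$, the hyperplanes $\{g^n \frakh : n \in \Z\}$ are pairwise distinct. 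Hence for any vertex $x \in \gam^0$ the number of hyperplanes of $\bbW_\ast$ separating $x$ from $g^n x$ grows linearly in $n$. Via the bijection $\bbW_\ast \to \bbH(\calX_\ast)$, this count equals $d_{\calX_\ast}(\phi_\ast(x), g^n \phi_\ast(x))$, giving $\ell_{\calX_\ast}[g] > 0$.

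For $\phi$ the same hyperplane-counting argument prevents $\phi(\gam)$ from collapsing: the existence of a bi-infinite $g$-invariant geodesic in the properly acted $\calZ$ forces $g$ to have infinite order, and since the action on $\calX$ is also proper and cocompact (Convention~\ref{conv.restriction}), $g$ acts hyperbolically on $\calX$ with $\ell_\calX[g] > 0$. The main step is essentially bookkeeping—verifying that the hyperplane-separation count along $\gam$ coincides with the combinatorial distance in $\calX_\ast$ between the projected vertices—which is a direct consequence of the restriction-quotient bijection $\bbW_\ast \to \bbH(\calX_\ast)$, so I do not foresee any serious obstacle.
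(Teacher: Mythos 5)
Your proof is correct and follows essentially the paper's approach—the paper's entire proof is the one-line remark that the lemma ``follows immediately from Remarks~\ref{rmk.geodorthotope} and~\ref{rmk.projquot},'' and you simply spell out the restriction-quotient and hyperplane-counting details that the authors treat as obvious, including the degeneracy criterion and the non-degeneracy of $\phi(\gam)$ via properness. One small wording slip: you justify that the surviving edges are dual to distinct hyperplanes by appealing to ``consecutive edges of $\gam$ are dual to distinct hyperplanes,'' but geodesicity in a $\CAT(0)$ cube complex means that \emph{all} pairs of edges of $\gam$ are dual to distinct hyperplanes, not merely consecutive ones; since that stronger fact is what your argument actually uses, the conclusion is unaffected.
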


\begin{proof}[Proof of Lemma \ref{lem.psiell_ast}]
Let $\om\in \Fix_n(\Sig)$ be as in the statement of the lemma and let $\om_n=t_n(\om)=(v_0\xrightarrow{e_1} \cdots \xrightarrow{e_n}v_n)\in P_n(\Sig^\ast)$. As we noted previously we have $\ell_\calX[\beta(\om_n)]=n$, so the main content of the lemma are the identities \eqref{eq.ell_X} and \eqref{eq.ell_Xast}, which we now prove. 

For any $k\geq 1$, let $\om_n^{(k)}\in P(\Sig^\ast)$ be the concatenation of $k$ copies of $\om_n$ and let $(\gam^{(k)})_k=(\gam_{({\om_n}^{(k)})})_k\subset \calX$ be a sequence of good representatives of $\om_n^{(k)}$ with a common starting vertex $\gam^-=(\gam^{(k)})^-$. Let $q\in \G$ satisfy $(\gam^{(1)})^+=q\gam^-$, so that $[q]=\beta(\om_n)$ and $(\gam^{(k)})^+=q^k\gam^-$ for each $k$. Also, since $\om$ is a closed path, by Lemma \ref{lem.geodesictogeodesic} we know that $\gam^{(1)}$ is a fundamental domain for a $q$-invariant geodesic in $\calX^\frakw$, and hence 
\begin{align*}
r^n(\om) &=\ov\frakw(\pi_\calX(e_1))+\cdots +\ov\frakw(\pi_\calX(e_n))\\
& =d_\calX^{\frakw}((\gam^{(1)})^-,(\gam^{(1)})^+)=d_\calX^{\frakw}(\gam^-,q\gam^-)=\ell_{\calX}^\frakw[q]=\ell_\calX^\frakw[\beta(t_n(\om))].
\end{align*}
This proves \eqref{eq.ell_X}.

To prove \eqref{eq.ell_Xast}, let $L$ be such that $\phi_\ast:\calZ \ra \calX_\ast^{\frakw_\ast}$ is $L$-Lipschitz and consider the lifts $\wtilde \gam^{(k)}\subset \calZ$ of the geodesic $\gam^{(k)}$ with a common starting vertex $\wtilde\gam^-=(\wtilde\gam^{(k)})^-$. We project these paths to $\calX_\ast$ by defining $\gam^-_\ast:=\phi_\ast(\wtilde \gam^-)$ and $(\gam_\ast^{(k)})^+:=\phi_\ast((\wtilde \gam^{(k)})^+)$, and for all $k$ we get
\begin{equation}\label{eq.gam^(k)}
    |d^{\frakw_\ast}_{\calX_\ast}(\gam_\ast^-,(\gam_\ast^{(k)})^+)-d^{\frakw_\ast}_{\calX_\ast}(\gam^-_\ast,q^k\gam_\ast^-)|\leq d^{\frakw_\ast}_{\calX_\ast}((\gam^{(k)}_\ast)^+,q^k \gam^-_\ast)\leq Ld_\calZ((\wtilde \gam^{(k)})^+,q^k\wtilde \gam^-).
\end{equation}
The last term in the inequality above is bounded by a number independent of $k$ since $\phi((\wtilde \gam^{(k)})^+)=\phi(q^k\wtilde \gam^-)=q^k \gam^-$. Also, we note that 
$d_{\calX_\ast}^{\frakw_\ast}(\gam_\ast^-,(\gam_\ast^{(k)})^+)=\ov\frakw_\ast(\al(\om_n^{(k)}))$, which equals $k\cdot \ov\frakw_\ast(\al(\om_n))$ since $\al(\om_n^{(k)})$ is the concatenation of $k$ copies of $\al(\om_n)$. 
By Proposition \ref{prop.languageL_X} (4) it follows that
\begin{align*}
    d^{\frakw_\ast}_{\calX_\ast}(\gam_\ast^-,(\gam_\ast^{(k)})^+) & =k\cdot \ov\frakw_\ast(\al(\om_n)) \\
    & =k \cdot (\ov\frakw_\ast(\Psi(v_0)\pi_\calX(e_1))+\ov\frakw_\ast(\Psi(v_1)\pi_\calX(e_2))+\cdots +\ov\frakw_\ast(\Psi(v_{n-1})\pi_\calX(e_n))) \\
    & =k \cdot (\psi(v_0\xrightarrow{e_1} v_1)+\psi(v_1\xrightarrow{e_2} v_2)+\cdots +\psi(v_{n-1}\xrightarrow{e_n} v_n)) \\
    & = k \cdot (\psi(\om)+\psi(\sig(\om))+\cdots +\psi(\sig^{n-1}(\om)))=k\psi^n(\om). 
\end{align*}
Therefore, combining this with \eqref{eq.gam^(k)} and after dividing by $k$ and letting $k$ tend to infinity we obtain $\ell_{\calX_\ast}^{\frakw_\ast}[\beta(t_n(\om))]=\ell^{\frakw_\ast}_{\calX_\ast}[q]=\psi^n(\om)$, as desired.
\end{proof}

To apply the results from Section \ref{sec.ld} we require a mixing (or at least transitive) dynamical system. To obtain such a system we consider a maximal recurrent component $\calC$ of the graph $\calG_\calX$. As before we let ${\Sig_\calC}^\ast\subset \Sig^\ast$ and $\Sig_\calC\subset \Sig$ be the subsets corresponding to paths in $\calC$, and note that $\Sig_\calC$ is $\sig$-invariant. Similarly we define $P(\Sig^\ast_\calC)$, $P_n(\Sig^\ast_\calC)$ and $P_{\leq n}(\Sig_\calC^\ast)$, and we identify $P_n(\Sig_\calC^\ast)$ with $\Fix_n(\Sig_\calC)$. 

Let $r_{\calX}^\frakw : \Sigma_\Cc \to \R_{>0}$ be the (constant on $2$ cylinder) restriction to $\Sig_\calC$ of the potential introduced in Definition \ref{eq.defpsi_X} and consider the suspension flow:
\[
\Sigma_\Cc^{r_{\calX}^\frakw} := \{(\om,t) \in \Sigma_\Cc \times \R : 0 \le t \le r_{\calX}^\frakw(\om) \}/\sim,
 \]
where each $(\om,r_\calX^\frakw(\om))$ is identified with $(\sigma(\om), 0)$ and the flow $\sig^{r_\calX^\frakw}=(\sig^{r_\calX^\frakw}_s)_{s\in \R_{>0}}$ acts as $\sig^{r_\calX^\frakw}_s(\om,t) = (\om,t+s)$. 

Note that any closed $\sig^{r_\calX^\frakw}$-orbit $\tau$ in $\Sigma_\Cc^{r_{\calX}^\frakw}$ corresponds to a closed $\sigma$-orbit in $\Sig_\calC$. More precisely, such an orbit $\tau$ must be of the form 
$\tau=\{(\om,t) : 0\leq t \leq  (r_\calX^{\frakw})^n(\om) \}$ for some $\om \in \Sig_\calC$ such that $\sig^n(\om)=\om$. In this case the period of $\tau$ equals $l_\tau=(r_\calX^{\frakw})^n(\om)$.


We fix a smooth function $\Delta : [0,1] \to \R_{\ge 0}$ such that $\Delta(0) = \Delta(1) = 1$ and $\int_0^1 \Delta(t) \ dt =1$ and define $\Phi:\Sig_\calC^{r_\calX^\frakw}\ra \R_{\ge 0}$ according to
\begin{equation}\label{eq.defPhi}
\Phi(\om,t) = \Delta\left(\frac{t}{r_{\calX}^\frakw(\om)} \right) \frac{\p_{\calX_\ast}^{\frakw_\ast}(\om)}{r_{\calX}^\frakw(\om)} \ \text{ for each } (\om,t) \in \Sigma_\Cc^{r_{\calX}^\frakw} \text{ with }0\leq t\leq r_\calX^\frakw(\om).
\end{equation}

This function has the property that, for any closed $\sig^{r_\calX^\frakw}$-orbit $\tau$ in $\Sig_\calC^{r_\calX^\frakw}$ with period $l_\tau$ and corresponding periodic $\sig$-orbit $\om,\sig(\om),\dots,\sig^n(\om)=\om$ in $\Sigma_\Cc$ we have
\[
\int_\tau \Phi := \int_0^{l_\tau} \Phi(\sig^{r_\calX^\frakw}_t(\om,0)) \ dt =  \ell_{\calX_\ast}^{\frakw_\ast}[\beta(t_n(\om))],
\]
where we have used Lemma \ref{lem.psiell_ast}. For $\tau, \om$ and $n$ as above we adopt the notation 
\[
\beta(\tau) = \beta(t_n(\omega)),
\]
which defines a map $\beta:  P(\Sigma_\Cc^{r_{\calX}^\frakw}) \to \conj(\ov{\G})$ 
from the set $P(\Sigma_\Cc^{r_{\calX}^\frakw})$ of periodic orbits of $\Sigma_\Cc^{r_{\calX}^\frakw}$ into $\conj(\ov\G)$. By Lemma \ref{lem.psiell_ast}, the period of any $\tau \in P(\Sigma_\Cc^{r_{\calX}^\frakw})$ equals $\ell_\calX^\frakw[\beta(\tau)]$.


For such a suspension flow $(\Sig^{r_\calX^\frakw}_\calC,\sig^{r_\calX^\frakw})$, the Manhattan curve for $(\calX^\frakw,\calX^{\frakw_\ast}_\ast)$ can be described in terms of the pressures related to $\Phi$, as stated in the next proposition.


\begin{proposition}\label{prop.manattanpressure}
Let $\G,\calX,\calX_\ast,\calZ,\frakw,\frakw_\ast$ satisfy Convention \ref{conv.X_ast} and let $\calA_\calX$ and $r_\calX^\frakw,\psi_{\calX_\ast}^{\frakw_\ast}:\Sig \ra \R$ be given by Proposition \ref{prop.languageL_X} and Definition \ref{def.psicalX} respectively. If $\calC$ is a maximal recurrent component of $\calG_\calX$ and $\Phi:\Sig^{r_\calX^\frakw}_\calC \ra \R$ is given by \eqref{eq.defPhi}, then for any $s\in \R$ we have
$$
\theta_{\calX^{\frakw_\ast}_\ast/\calX^\frakw}(s) = \textnormal{P}_\Cc(-s\Phi),
$$
where $\textnormal{P}_\calC(-s\Phi)$ is the pressure of the potential $-s\Phi$ on the suspension $(\Sig_\calC^{r_\calX^\frakw},\sig^{r_\calX^\frakw})$.
\end{proposition}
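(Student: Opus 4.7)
The strategy is to express both sides as exponential growth rates of Poincar\'e-type series indexed by the same combinatorial data (periodic orbits) and reconcile the two indexings. By the standard manipulation of Dirichlet-type series, the abscissa of convergence defining $\theta_{\calX^{\frakw_\ast}_\ast/\calX^\frakw}(s)$ coincides with
$$\limsup_{T \to \infty} \frac{1}{T} \log \sum_{[g] \in \conj(\G),\, \ell_\calX^\frakw[g] < T} e^{-s \ell_{\calX_\ast}^{\frakw_\ast}[g]},$$
while the pressure formula for suspension flows recalled in Section \ref{sec.sf} (taking $R$ large enough) gives
$$\textnormal{P}_\calC(-s\Phi) = \limsup_{T \to \infty} \frac{1}{T} \log \sum_{\tau \in P(\Sig_\calC^{r_\calX^\frakw}),\, l_\tau < T} e^{-s \int_\tau \Phi}.$$
Lemma \ref{lem.psiell_ast} and the definition of $\Phi$ identify $l_\tau = \ell_\calX^\frakw[\beta(\tau)]$ and $\int_\tau \Phi = \ell_{\calX_\ast}^{\frakw_\ast}[\beta(\tau)]$, so each periodic orbit $\tau$ contributes the factor $e^{-s\ell_{\calX_\ast}^{\frakw_\ast}[\beta(\tau)]}$, matching the Manhattan summands once we reindex by conjugacy classes through $\beta$.

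For the inequality $\textnormal{P}_\calC(-s\Phi) \le \theta_{\calX^{\frakw_\ast}_\ast/\calX^\frakw}(s)$, the plan is to show that the map $\beta: P(\Sig_\calC^{r_\calX^\frakw}) \to \conj(\ov\G)$ is uniformly finite-to-one. This follows from Proposition \ref{prop.languageL_X}(1) together with the bijectivity of $\tau_\calX$: the $C$-to-one nature of word representations in $\calG_\calX$ and cyclic rotation of closed paths give a uniform upper bound on the number of periodic orbits producing any given $[g] \in \conj(\ov\G)$. Since $\ov\G$ has finite index in $\G$ and translation lengths are preserved under passage to finite-index subgroups, the natural map $\conj(\ov\G) \to \conj(\G)$ is likewise uniformly finite-to-one and preserves both $\ell_\calX^\frakw$ and $\ell_{\calX_\ast}^{\frakw_\ast}$. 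These bounded multiplicities disappear after taking $T^{-1}\log$, yielding the upper bound.

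For the reverse inequality, I would invoke the contracting element assumption of Definition \ref{def.classX}(3). By the counting results of Yang \cite{yang} applied to the action of $\G$ on $\calX^\frakw$, the subset of $\conj(\G)$ consisting of classes with contracting representatives attains the full exponential growth rate with respect to $\ell_\calX^\frakw$. Using the combinatorial encoding provided by $\calA_\calX$, each such class corresponds (up to uniformly bounded modification at the endpoints) to a closed path of $\calG_\calX$ eventually contained in a maximal recurrent component; after choosing $\calC$ to be a dominant such component, a full-growth portion of the Manhattan summands is witnessed by periodic orbits in $\Sig_\calC$, producing the matching lower bound on $\textnormal{P}_\calC(-s\Phi)$.

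The main obstacle is this lower bound: rigorously translating Yang's contracting-element counts on $\calX$ into a statement about closed paths in the automaton $\calA_\calX$, and ensuring that the chosen maximal recurrent component $\calC$ carries the dominant periodic-orbit contribution. A related subtlety is treating torsion conjugacy classes, short conjugacy classes, and classes whose axes do not enter $\calC$; these must be shown to form an exponentially negligible set so that their omission from the periodic-orbit sum does not affect the abscissa of convergence, and hence the equality with the pressure.
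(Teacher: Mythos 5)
Your reduction of both sides to exponential growth rates of Poincar\'e-type sums, and the identification $l_\tau = \ell_\calX^\frakw[\beta(\tau)]$, $\int_\tau \Phi = \ell_{\calX_\ast}^{\frakw_\ast}[\beta(\tau)]$ via Lemma~\ref{lem.psiell_ast}, match the paper's framework (which works with shell sums $\calP(R,T,s)$ and $\calP_\calC(R,T,s)$ and relates them to the Manhattan Dirichlet series). However, there are two issues, one minor and one fatal.

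The minor issue is your claim that $\beta : P(\Sig_\calC^{r_\calX^\frakw}) \to \conj(\ov\G)$ is \emph{uniformly} finite-to-one via Proposition~\ref{prop.languageL_X}(1) and cyclic rotation. This is not true: the number of representatives $q \in [g]$ with $d_\calX(o,qo)$ within $O(1)$ of $\ell_\calX[g]$ can grow polynomially in $\ell_\calX[g]$, and the paper's Lemma~\ref{lem.polyn} only establishes a polynomial bound $Q(T)$ on the multiplicity (the proof of \eqref{eq.polyn} in that lemma embeds $\ov\G$ in a RAAG and bounds $\#E_{2R}[g]$ by a polynomial, not a constant). Fortunately a polynomial multiplicity also washes out after taking $T^{-1}\log$, so this inequality still goes through once the claim is corrected, but the justification you give would not compile into a valid estimate.

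The fatal issue is the direction $\theta_{\calX^{\frakw_\ast}_\ast/\calX^\frakw}(s) \le \textnormal{P}_\calC(-s\Phi)$, which you correctly identify as the hard step but then leave unresolved. The paper's argument here is Lemma~\ref{lem.goodpath}: for every \emph{non-torsion} $[g] \in \conj(\G)$ it produces a closed path $\om_{[g]} \in P(\Sig_\calC^\ast)$ whose associated conjugacy class $\beta(\om_{[g]})$ has \emph{both} $\ell_\calX^\frakw$- and $\ell_{\calX_\ast}^{\frakw_\ast}$-translation lengths within a uniform constant $C'$ of those of $[g]$, and this association is uniformly finite-to-one. Your sketch invokes Yang's counting for contracting classes, but that only controls $\ell_\calX^\frakw$; it does not by itself give the simultaneous control on $\ell_{\calX_\ast}^{\frakw_\ast}$ that is the entire point of the two-complex encoding. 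The actual mechanism in Lemma~\ref{lem.goodpath} is: (i) use Yang's Theorem~C together with Lemma~\ref{lem.upperdensity} (positive density of $\G_\calC$) to find a good representative path $\om'$ in $\Sig_\calC^\ast$ that fellow-travels an axis of $g$ in $\calX$; (ii) close it up inside $\calC$ using recurrence; (iii) lift it to $\calZ$ via Proposition~\ref{prop.languageL_X}(4) and project under $\phi_\ast$ to $\calX_\ast$ to control $\ell_{\calX_\ast}^{\frakw_\ast}$ via Lemma~\ref{lem.geodesictogeodesic}. None of these steps is present in your proposal, and without them the inequality cannot be completed. You also correctly flag torsion classes and short classes as needing to be discarded, but the paper disposes of these because torsion classes have bounded translation length and so contribute nothing to the exponential rate; this should be said explicitly rather than listed as an open subtlety.
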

In particular, this result implies that the pressure $s \mapsto \textnormal{P}_\calC(-s\Phi)$ is independent of the choice of maximal recurrent component $\calC$.

\begin{remark}\label{rmk.pressure}
    As it will be clear from the proof of Proposition \ref{prop.manattanpressure}, if $\calC$ is any maximal recurrent component of $\calG_\calX$ then for any $s\in \R$ we also have
    \[\theta_{\calX_\ast/\calX}(s)=\text{P}_\calC(-s\psi),\]
    where $\psi=\psi_{\calX_\ast}^{\frakw_\ast}$ for $\frakw_\ast \equiv 1$ the constant orthotope structure  and $\text{P}_\calC(-s\psi)$ is the pressure of the potential $-s\psi$ on $(\Sig_\calC,\sig)$.
\end{remark}


The rest of the section is devoted to proving this proposition, and for the sequel we fix a compact set $K\subset\calX$ such that $\calX=\G K$ and assume that $o\in K$. Since $\calG_\calX$ is finite and pruned we also fix $N>0$ such that any $\om\in \Sig^\ast$ has a good representative $\gam_\om$ satisfying $d_\calX(\gam^-,o)\leq N$. Otherwise explicit, for any $\om\in \Sig^\ast$ we fix a good representative $\gam_\om$ that minimizes $d_\calX(\gam^-,o)$ and define
     \[\G_\calC:=\{g\in \G \colon \text{ there exists }\om\in \Sig_\calC^\ast \text{ such that }\gam_\om^+\in gK\}.\]
We also write $B_n=\{g\in \G \colon d_\calX(go,o)\leq n\}$ for each $n\geq 0$. 

\begin{lemma}\label{lem.gam_omC}
    Let $C$ be the constant from Proposition \ref{prop.languageL_X} (1). Then
    \[ \sup_{x\in \calX^0}{\#\{\om\in \Sig^\ast \colon \gam^+_\om=x\}}\leq C(N+1).\]
\end{lemma}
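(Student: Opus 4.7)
The plan is to parametrize the paths $\om$ in question by the initial extension $\om_0$ whose existence is guaranteed by the definition of a good representative, then use the three items of Proposition \ref{prop.languageL_X} to bound both the number of choices for $\om_0$ and the number of continuations $\om$.

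Fix $x\in \calX^0$ and set $w':=\tau_\calX^{-1}(x)\in L_\calX$, which is well defined by the bijectivity in Proposition \ref{prop.languageL_X}(3). For each $\om\in \Sig^\ast$ with $\gam^+_\om=x$, the fixed good representative $\gam_\om$ comes from some $\om_0\in \Sig^\ast$ starting at an initial state of $\calA_\calX$ and ending at the initial vertex of $\om$, so that the concatenation $\om_0\om$ is a path in $\calG_\calX$ representing a word $w_0w\in L_\calX$ with $\tau_\calX(w_0w)=\gam^+_\om=x$. By uniqueness we must have $w_0w=w'$, so $w_0$ is a prefix of $w'$. Moreover, since $\gam_{w_0}$ is a geodesic from $o$ to $\tau_\calX(w_0)=\gam^-_\om$ by Proposition \ref{prop.languageL_X}(2), we get $|w_0|=d_\calX(o,\gam^-_\om)\leq N$.

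Now I count the pairs $(\om_0,\om)$ arising this way. The prefix $w_0$ of $w'$ has length at most $N$, so there are at most $N+1$ possibilities for $w_0$. For each fixed $w_0$, every pair $(\om_0,\om)$ as above yields a single path $\om_0\om$ in $\calG_\calX$ starting at an initial state and representing $w'$; by Proposition \ref{prop.languageL_X}(1) there are at most $C$ such paths, and each is split at position $|w_0|$ in a unique way. Therefore there are at most $C(N+1)$ pairs $(\om_0,\om)$, and in particular at most $C(N+1)$ choices for $\om$, which proves the lemma.

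The argument is short and essentially bookkeeping; the only subtlety is to observe that the starting vertex of the chosen good representative is determined by the prefix $w_0$ rather than by the path $\om_0$ itself, so that the length of $w_0$ is controlled by $N$. I do not foresee any real obstacle.
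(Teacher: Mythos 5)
Your proof is correct and follows essentially the same route as the paper: reduce to counting pairs $(\om_0,\om)$, observe that the split position $|w_0|$ (or equivalently $|\om_0|$) is bounded by $N$, and apply Proposition \ref{prop.languageL_X}(1) to bound the number of paths in $\calG_\calX$ starting at an initial state and representing $w'=\tau_\calX^{-1}(x)$. The paper organizes the count by first mapping $\om\mapsto\om'=\om_0\om$ (an at most $(N{+}1)$-to-one map) and then invoking (1), while you sum over the $N{+}1$ possible prefix lengths first; these are the same bookkeeping.
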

\begin{proof}
    Let $x\in \calX^0$ and $\om \in \Sig^\ast$ be such that $\gam_\om^+=x$, so that $\gam_\om$ is constructed from a path $\om_0\in \Sig^\ast$ such that $\om'=\om_0\om$ also belongs to $\Sig^\ast$. Then $\om_0$ is a prefix of $\om'$ of length at most $N$ and $\om'$ represents the unique word $w\in L_\calX$ satisfying $\tau_\calX(w)=x$. From this we deduce
\begin{align*}
 \#\{\om\in \Sig^\ast \colon \gam^+_\om=x\} & \leq  (N+1)\cdot \# \{\om' \in \Sig^\ast \colon \om' \text{ starts at an initial state and }\gam^+_{\om'}=x\} \\
& \leq (N+1)\cdot \#\{\om' \in \Sig^\ast \colon \om' \text{ represents }w \text{ and starts at an initial state}\},
\end{align*}
and the lemma follows from Proposition \ref{prop.languageL_X} (1).
\end{proof}



\begin{lemma}\label{lem.upperdensity}
    The set $\G_\calC$ has positive lower density for the action on $\calX$. That is
    \[\liminf_{n\to \infty}{\frac{\#(\G_\calC \cap B_n)}{\# B_n}}>0.\]
\end{lemma}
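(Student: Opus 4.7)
The plan is to bound $\#(\G_\calC \cap B_n)$ from below and $\#B_n$ from above by constants times $e^{v_\calX n}$, obtaining matching exponential rates.

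For the lower bound, I consider the assignment $\om \mapsto \gam_\om^+$ from $\{\om \in \Sig_\calC^\ast : |\om| \le n\}$ to $\calX^0$. Since $\gam_\om$ is a combinatorial geodesic of length $|\om|$ with $\gam_\om^-$ at distance at most $N$ from $o$, its endpoint $\gam_\om^+$ lies in the ball of radius $n + N$ around $o$. The equality $\calX = \G K$ ensures that for each $\gam_\om^+$ there exists $g \in \G$ with $\gam_\om^+ \in gK$; this $g$ lies in $\G_\calC$ by definition, and in $B_{n + N + \diam(K)}$. The composition $\om \mapsto \gam_\om^+ \mapsto g$ has uniformly bounded multiplicity: Lemma \ref{lem.gam_omC} gives at most $C(N+1)$ preimages at the first stage, and properness of the action together with compactness of $K$ provides a uniform bound on $\#\{g \in \G : v \in gK\}$ for each vertex $v$. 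Hence there exists $c_1 > 0$ with
\[ \#(\G_\calC \cap B_{n + N + \diam(K)}) \ge c_1 \cdot \#P_{\le n}(\Sig_\calC^\ast). \]

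Because $\calC$ is a maximal recurrent component of $\calG_\calX$, its adjacency submatrix has spectral radius $e^{v_\calX}$. Standard Perron-Frobenius theory applied to the strongly connected subgraph $\calC$ yields $\#P_{\le n}(\Sig_\calC^\ast) \ge c_2 e^{v_\calX n}$ for large $n$ (after summing over an arithmetic progression if $\calC$ is periodic). For the matching upper bound $\#B_n \le c_3 e^{v_\calX n}$, I will invoke purely exponential growth of the action of $\G$ on $\calX$, which holds because the action has a contracting element (as built into the definition of the class $\frakX$). Combining the three bounds yields
\[ \liminf_{n \to \infty} \frac{\#(\G_\calC \cap B_n)}{\#B_n} > 0. \]

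The main obstacle is establishing the upper bound $\#B_n \le c_3 e^{v_\calX n}$: a polynomial correction to the growth of $\#B_n$ (arising, via the bijection $\tau_\calX$, from chains of maximal recurrent components in $\calG_\calX$ with common spectral radius) would drive the ratio to zero. The contracting element hypothesis is precisely what prevents such corrections by guaranteeing purely exponential growth of the orbit, and this is the only point in the argument where that hypothesis is used.
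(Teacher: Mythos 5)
Your proof is correct and follows essentially the same route as the paper's: a lower bound on $\#(\G_\calC \cap B_n)$ via the finite-to-one assignment $\om \mapsto \gam_\om^+ \mapsto g$ (using Lemma~\ref{lem.gam_omC} and cocompactness), a Perron--Frobenius lower bound on the number of (closed) paths in the maximal recurrent component $\calC$, and the purely exponential growth of $\#B_n$ supplied by the contracting element via Yang's theorem. The only cosmetic differences are that the paper invokes a specific spectral estimate of Dahmani--Futer--Wise where you cite standard Perron--Frobenius theory, and that you phrase the lower bound through closed paths $P_{\le n}(\Sig_\calC^\ast)$ while the paper counts all paths of bounded length (your inequality is weaker but still sufficient); these are equivalent in effect.
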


\begin{proof}
For each $n$ we let $(\Sig_\calC^\ast)_{\leq n}$ denote the set of paths in $\calC$ of length at most $n$. First we claim that there exist $B>1$ and $0\leq \lam < e^{v_\calX}$ such that 
\begin{equation}\label{eq.claimsigma}
    \# (\Sig_\calC^\ast)_{\leq n} \geq B^{-1}e^{n v_\calX}-B\lam^{n }
\end{equation}
for all $n$ large enough. To show this, let $A$ be the adjacency matrix of $\calC$, which is irreducible since $\calC$ is recurrent. Moreover, $\calC$ being maximal implies that the spectral radius of $A$ equals $e^{v_\calX}$. Suppose that $A$ has $p\geq 1$ eigenvalues of absolute value $e^{v_\calX}$ and let $0\leq \lam <e^{v_\calX}$ be any number greater than the absolute value of all the other eigenvalues of $A$. By \cite[Thm.~3.1]{DFW}, for each $k\geq 1$ the matrix $A^{kp}$ has $e^{kpv_\calX}$ as eigenvalue of multiplicity $p$ and all its other eigenvalues have absolute value less than $\lam^{kp}$. In particular, its trace satisfies
\[\tr (A^{kp}) \geq pe^{kpv_\calX}-(\dim A -p)\lam^{kp}.\]
But $\tr (A^{kp})$ equals the number of closed paths of length $kp$ in $\calC$, so if $n=kp+r$ with $0\leq r<p$ an integer and $k\geq 1$, then
\begin{align*}
    \#(\Sig_\calC^\ast)_{\leq n}\geq \#(\Sig_\calC^\ast)_{\leq kp} 
 & \geq \tr(A^{kp})\geq  he^{kpv_\calX}-(\dim A -p)\lam^{kp}\\
  &  \geq  (pe^{-pv_\calX})e^{nv_\calX}-(\dim A -p)\lam^{n}.
\end{align*}

This concludes the proof of the claim. Now consider $n$ large enough and $\om\in (\Sig_\calC^\ast)_{\leq n}$. Since the $\G$-translates of $K$ cover $\calX$ we have $\gam_\om^+\in gK$ for some $g\in \G$, so that $d_\calX(\gam^+_\om,go)\leq D$ with $D$ being the diameter of $K$. In addition, by definition we have $d_\calX(\gam_\om^-,o)\leq N$ and hence $d_\calX(o,go)\leq n+D+N$. By Lemma \ref{lem.gam_omC} this implies that 
\begin{equation*}
\#(\Sig_\calC^\ast)_{\leq n} \leq \#(\G_\calC \cap B_{n+D+N})\cdot \sup_{g\in \G}\# \{\om \in \Sig_\calC^\ast \colon \gam^+_\om \in gK\} \leq (N+1)C\cdot \#K \cdot \#(\G_\calC \cap B_{n+D+N}),
\end{equation*}
where $C$ is the constant from Proposition \ref{prop.languageL_X} (1).

Combining this with \eqref{eq.claimsigma} we get
\begin{align*}
    \#(\G_\calC \cap B_n) &\geq ((N+1)C\#K)^{-1}B^{-1}e^{(n-D-N)v_\calX}-((N+1)C\#K)^{-1}B\lam^{n-D-N} \\
    &= [ ((N+1)(C\#K)B)^{-1}e^{-(D+N)v_\calX}]e^{nv_\calX}-[((N+1)C\#K)^{-1}B\lam^{-D-N}]\lam^{n}.
\end{align*}
Finally, the since the action on $\calX$ has a contracting element, \cite[Thm.~1.8~(2)]{yang} implies that there exists $C'>0$ such that $\#B_n \leq C'e^{nv_{\calX} }$ for all $n$ large enough, and the conclusion follows.
\end{proof}

The next two results are used to give a uniform comparison of the number of conjugacy classes in $\G$ with a bound on their translation lengths (with respect to $\calX^\frakw$) and the number of periodic orbits in the suspension $(\Sig^{r_\calX^\frakw}_\calC,\sig^{r_\calX^\frakw})$ with bounded period. The assumption of having a contracting element is essential in the proof of the Lemma \ref{lem.goodpath}. For $T\geq R \geq 0$, recall that $P(\Sigma_\Cc^{r_{\calX}^\frakw}, R, T)$ denotes the set of periodic orbits in $(\Sig_\calC^{r_\calX^\frakw},\sig^{r_\calX^\frakw})$ with period on the interval $[T-R,T+R]$.

\begin{lemma}\label{lem.polyn}
    For any $R >0$ there exists a polynomial $Q$ (depending on $R$) that is non-decreasing on $\R_{>0}$ and such that for any $[g]\in \conj(\ov\G)$ we have 
    \begin{equation}\label{eq.polyQ}
        \# \{\tau\in P(\Sigma_\Cc^{r_{\calX}^\frakw}, R, T) : \beta(\tau) = [g] \}\leq Q(T).
    \end{equation}
\end{lemma}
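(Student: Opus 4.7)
The plan is to reduce the count of closed suspension orbits mapping to $[g]$ to a count of good representatives in $\calX$. First, I would apply Lemma~\ref{lem.psiell_ast}: any $\tau\in P(\Sigma_\Cc^{r_{\calX}^\frakw}, R, T)$ with $\beta(\tau)=[g]$ corresponds to a closed path $\om$ in $\calC$ of combinatorial length $n=\ell_\calX[g]$ whose period equals $\ell_\calX^\frakw[g]\in [T-R,T+R]$. Since $r_\calX^\frakw$ is strictly positive and constant on $2$-cylinders, it is bounded below by some $c_->0$, forcing $n\leq (T+R)/c_-$.

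Next, each such $\om$ has a good representative $\gam_\om\subset \calX$: a geodesic of length $n$ with $d_\calX(\gam_\om^-,o)\leq N$ and $\gam_\om^+=q\gam_\om^-$ for some $q\in \ov\G$ with $[q]=[g]$. By Lemma~\ref{lem.gam_omC}, at most $C(N+1)$ closed paths share a common endpoint, so the count of closed orbits is bounded by $C(N+1)$ times the number of distinct endpoints. Each endpoint has the form $qv$ with $v\in B_N(o)\cap \calX^0$ and $q\in \ov\G$ conjugate to $g$ satisfying $d_\calX(v,qv)=n$, i.e.\ $v\in \mathrm{Min}(q)$.

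I would then bound the pairs $(v,q)$: for fixed $q$, the number of admissible $v$ is at most the uniform constant $|B_N(o)\cap \calX^0|$. To bound the conjugates $q=\ov h g \ov h^{-1}$ modulo the centralizer $Z_{\ov\G}(g)$ with $\mathrm{Min}(q)\cap B_N(o)\neq \emptyset$, I rewrite the condition as $\ov h^{-1}o$ lying in the $N$-neighborhood of $\mathrm{Min}(g)$, and count such vertices modulo the $Z_{\ov\G}(g)$-action on $\mathrm{Min}(g)$. Since $\langle g\rangle\leq Z_{\ov\G}(g)$ acts by translation along the axis with fundamental domain of length $\ell_\calX[g]$, and $\ov\G$ acts properly and cocompactly on $\calX$ with bounded geometry, a fundamental domain of $\langle g\rangle$ inside this $N$-neighborhood contains polynomially many (in fact linearly in $\ell_\calX[g]$) vertices of $\ov\G\cdot o$. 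Collecting these estimates gives $Q(T)=O(T)$.

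The hard part will be uniformly controlling the $\ov\G$-orbit count in the $N$-neighborhood of $\mathrm{Min}(g)$, in particular controlling the transverse cross-section of the min-set. This is immediate for contracting $g$ (where $\mathrm{Min}(g)$ is quasi-one-dimensional), but in general I will need to invoke cocompactness of the $Z_{\ov\G}(g)$-action on $\mathrm{Min}(g)$ together with bounded geometry of $\calX$ to keep the count polynomial. The resulting polynomial $Q$ will depend on $R$, $N$, the multiplicity constant $C$ from Proposition~\ref{prop.languageL_X}, and the geometry of $\calX$.
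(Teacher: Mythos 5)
Your reduction is the same as the paper's up to and including the use of Lemma~\ref{lem.gam_omC} and the good representatives: both arguments boil the problem down to bounding, polynomially in $\ell_\calX[g]$, the number of representatives $q\in[g]$ with $d_\calX(o,qo)\le \ell_\calX[g]+O(1)$. Where you diverge is in how this bound is established, and that is where your proposal has a genuine gap.

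You propose to count such $q$ by identifying conjugates of $g$ with cosets of $Z_{\ov\G}(g)$ and then counting $Z_{\ov\G}(g)$-orbits of orbit points $\ov h^{-1}o$ lying in $N_N(\mathrm{Min}(g))$. The claim that this count is polynomial rests entirely on the sentence you flag as ``the hard part'': invoking cocompactness of $Z_{\ov\G}(g)$ on $\mathrm{Min}(g)$ and bounded geometry of $\calX$. Even granting a flat-torus-type statement that each $Z_{\ov\G}(g)$ acts cocompactly on a suitable combinatorial min-set, what you actually need is a \emph{uniform} control: the number of $\ov\G$-orbit points in a fundamental domain for $Z_{\ov\G}(g)\curvearrowright N_N(\mathrm{Min}(g))$ must be bounded polynomially in $\ell_\calX[g]$ with constants independent of $[g]$. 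The axis direction is handled by the $\langle g\rangle$-translation, but the transverse cross-section of $\mathrm{Min}(g)$ can be an arbitrarily rich convex subcomplex, and you have not produced a $g$-independent bound on the size of a transverse fundamental domain. For a general virtually co-special cubulation this is not automatic, and establishing it would itself require a nontrivial argument (likely via convex-cocompactness of centralizers plus a uniform cocore bound). Absent that, the claim ``linearly in $\ell_\calX[g]$'' is an assertion, not a proof.

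The paper's proof circumvents all centralizer structure. It proves the inequality $\#\{g'\in [g] \colon d_\calX(g'x,x)\leq \ell_\calX[g]+R\}\leq \hat Q(\ell_\calX[g])$ by $\ov\G$-equivariantly embedding $\calX$ as a convex subcomplex of the universal cover of a Salvetti complex, reducing to a right-angled Artin group with its standard word metric, and then running an elementary shuffle argument: $\#E_0[g]\le \ell_\calX[g]$ (cyclic conjugates of a minimal word), and any representative with excess length $\le 2R$ arises from one with excess $\le 2R-2$ by inserting a single commuting pair $a a^{\pm 1}$, in at most $O(\#S\cdot(\ell+2R)^2)$ ways. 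This yields a polynomial of degree $2R+1$ with no geometric input beyond convex co-specialness. If you can supply the uniform transverse cocompactness estimate your route would be viable and may even yield a sharper (linear) bound, but as written it is incomplete; the paper's combinatorial argument is the safer and shorter path.
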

\begin{proof}
    To solve the lemma it is enough to show that there exists a polynomial $\widetilde{Q}$ that is increasing on $\R_{>0}$ and such that for any $[g]\in \conj(\ov\G)$ we have 
    \begin{equation}\label{eq.polyninpaths}
    \# \{\omega\in P(\Sig^\ast) \colon \beta(\omega)=[g]\}\leq \widetilde{Q}(\ell_\calX[g]).
    \end{equation}
    Indeed, since $\frakw$ is non-vanishing, the identity map $\textnormal{Id}:\calX \ra \calX^\frakw$ is a quasi-isometry so there exists $L>0$ such that $\ell_\calX[g]\leq L\ell_\calX^{\frakw}[g]$ for any $[g]\in \conj(\G)$. Also, if $\tau\in P(\Sigma_\Cc^{r_{\calX}^\frakw}, R, T)$ satisfies $\beta(\tau)=[g]$, then $l_\tau=\ell_\calX^\frakw[g]\in [R-T,R+T]$. Since any periodic orbit in $P(\Sigma_\Cc^{r_{\calX}^\frakw})$ corresponds to an (orbit determined by an) element in $P(\Sig_\calC^\ast)\subset P(\Sig^\ast)$, for any $[g]$ such that the left-hand side in \eqref{eq.polyQ} is non-zero we have 
    \begin{align*}
        \# \{\tau\in P(\Sigma_\Cc^{r_{\calX}^\frakw}, R, T) : \beta(\tau) = [g] \} & \leq \# \{\om\in P(\Sigma) : \beta(\om) = [g] \} \\
        & \leq \widetilde{Q}(\ell_\calX[g]) \leq \widetilde{Q}(LT+LR)=:Q(T).
    \end{align*}
    
    To prove \eqref{eq.polyninpaths} we claim that for any group $\ov\G$ acting properly, cocompactly and co-specially on a $\CAT(0)$ cube complex $\calX$ and for any $x\in \calX^{0}$ and $R\geq 0$ there exists a polynomial $\hat Q$ that is increasing on $\R_{>0}$ and such that
    \begin{equation}\label{eq.polyn}
        \#\{g\in [g] \colon d_\calX(gx,x)\leq \ell_\calX[g]+R\}\leq \hat Q(\ell_\calX[g]) 
    \end{equation}
    for any $[g]\in \conj(\ov{\G})$.

To see how this claim proves the lemma, fix $[g]\in \conj(\ov\G)$ and let $\om\in P(\Sig^\ast)$ be such that $\beta(\om)=[g]$. If $\gam=\gam_{\om}$ is a good representative such that $d_\calX(\gam^-,o)\leq N$, we let $q\in [g]$ be such that $\gam^+=q\gam^-$. Then $d_\calX(qo,\gam^+)=d_\calX(o,\gam^-)\leq N$ and we have 
\[|d_\calX(o,qo)-\ell_\calX[g]|=|d_\calX(o,qo)=d_\calX(\gam^-,\gam^+)|\leq 2N.\]
In addition, there exists a constant $\hat C>0$ such that for any $[g]\in \conj(\ov\G)$ and any $q\in [g]$ satisfying $|d_\calX(o,qo)-\ell_\calX[g]|\leq 2N$, the set
\[\{\om \in P(\Sig^\ast) \colon \beta(\om)=[g] \text{ and } d_\calX(\gam_\om^+,qo)\leq N\}\]
has cardinality at most $\hat C$. 
Indeed, if $B$ is the set of vertices at distance at most $N$ from $o$, then this cardinality is bounded above by
\begin{align*}
    \sum_{x\in B}{\# \{ \om \in P(\Sig^\ast) \colon \gam^+_\om=qx\} } & \leq \# B \cdot \sup_{x\in \calX^0}{\# \{ \om \in P(\Sig^\ast) \colon \gam^+_\om = x\}} \leq \# B \cdot C(N+1),
\end{align*}
where for the last inequality we used Lemma \ref{lem.gam_omC}.

Applying this to our case of interest, we deduce
\begin{align*}
    \# \{\omega\in P(\Sig^\ast) \colon \beta(\omega)=[g]\}\leq \hat C \cdot \# \{q\in [g] \colon |d_\calX(qo,o)-\ell_\calX[g]|\leq 2N\}\leq \hat C \cdot \hat Q(\ell_\calX[g]),
\end{align*}
where $\hat Q$ is the polynomial given by the claim for $x=o$ and $R=2N$.

To prove the claim \eqref{eq.polyn}, by equivariantly embedding $\calX$ as a convex subcomplex of the universal cover of a Salvetti complex we can assume that $\ov\G$ is a right-angled Artin group with standard (symmetric) generating set $S$ and $\calX$ is the universal cover of its Salvetti complex. Then $\calX^{1}$ is the Cayley graph for $\ov\G$ with respect to $S$, and since the expected conclusion is independent of the base point we can assume that $x=o$ is the identity element of $\G$, so that $d_\calX(gx,x)=|g|_S$ is the word length of $g$ for any $g\in \ov \G$. 
    
Now we fix $[g]\in \conj(\ov\G)$, set $\ell=\ell_\calX[g]=\ell_S[g]$ and consider the sets $E_n[g]=\{g\in [g]\colon |g|_S\leq \ell+n\}.$
Note that $\# E_0[g]\leq \ell$ since any two conjugate elements that minimize the word length are actually cyclically conjugated with respect to some minimal word representations. 
Also, if $g\in E_n[g]$, and $n>0$, then indeed $n\geq 2$ and $g$ is represented by a word of the form $x_1a^{\pm}x_2a^{\mp}x_3$, where $a\in S$ is a standard generator and all the letters in the words $x_1$ and $x_3$ commute with $a$. Then the element $g'$ represented by the word $x_1x_2x_3$ belongs to $E_{n-2}[g]$, and there are at most $\#{S}(\ell+n)(\ell+n-1)/2$ ways to reconstruct $g$ from $g'$. Therefore, we have $$\#E_n[g]\leq \#{S}(\ell+n)(\ell+n-1)/2\cdot \#E_{n-2}[g]$$
for each $n$, and hence 
$ \#\{g\in [g] \colon |g|_S\leq \ell+R\}\leq \#E_{2R}[g]\leq \hat Q(\ell)$
for $$\hat Q(t)=(\#S)^R(t+2R)(t+2R-1)\cdots (t+1)t/2^R.$$
This concludes the proof of the claim and the lemma.    
\end{proof}

\begin{lemma}\label{lem.goodpath}
    There exists $C'>0$ such that for any non-torsion conjugacy class $[g]\in \conj(\G)$ we can find a representative $\hat{g}\in [g]$ and a closed path $\omega_{[g]}\in P(\Sig_\calC^\ast)$ satisfying   
\[\min\{d_\calX(\hat g o,\gam^+_{\om_{[g]}}),d_\calX(\hat g^{-1}o,\gam^+_{\om_{[g]}} )\} \leq C',\] and additionally
    \[\max\{|\ell^\frakw_{\calX}[g]-\ell^\frakw_\calX[\beta(\om_{[g]})]|,  |\ell^{\frakw_\ast}_{\calX_\ast}[g]-\ell^{\frakw_\ast}_{\calX_\ast}[\beta(\om_{[g]})]|\}\leq C'.\]
\end{lemma}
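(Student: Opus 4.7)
The proof will follow Yang's philosophy for extending group-theoretic data in actions with a contracting element, combined with a pigeonhole argument on the finite automaton $\calA_\calX$. I would first pass to cubical barycentric subdivisions of $\calX$, $\calX_\ast$, and $\calZ$ (which preserves all hypotheses and only rescales quantities by $2$), so that every non-torsion cubical isometry acts by translation on a bi-infinite combinatorial geodesic. Fix a non-torsion $[g]\in \conj(\G)$ and let $\ell_g\subset \calX$ be a $g$-invariant combinatorial axis. Since $\calX=\G K$, some $\G$-translate of $\ell_g$ meets $K$, and replacing $g$ by the corresponding conjugate yields $\hat g\in [g]$ whose axis $\ell_{\hat g}$ contains a vertex $p_0$ with $d_\calX(o,p_0)\leq \diam(K)$. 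Writing $p_k := \hat g^k p_0$, each segment of $\ell_{\hat g}$ from $p_0$ to $p_k$ is a combinatorial geodesic of length $k\,\ell_\calX[g]$. Applying Lemma \ref{lem.geodesictogeodesic} to a $\calZ$-lift of this segment under $\phi_\ast$, its image is a $\hat g$-invariant combinatorial geodesic in $\calX_\ast^{\frakw_\ast}$ of $\frakw_\ast$-period $\ell_{\calX_\ast}^{\frakw_\ast}[g]$.

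The loop $\om_{[g]}$ is to be built from the coding of a long axial segment. By Lemma \ref{lem.tauX} (Proposition \ref{prop.languageL_X}), for each $k\geq 1$ there is a unique $w_k\in L_\calX$ with $\tau_\calX(w_k)=p_k$; let $\Om_k\in \Sig^\ast$ be a path coding $w_k$ from an initial state, and let $v_k\in V_\calX$ be its terminal vertex. Lemma \ref{lem.upperdensity} provides positive lower density of $\G_\calC$ in combinatorial balls of $\calX$; combined with the contracting-element hypothesis and the bounded-projection property of contracting axes from \cite{yang}, this upgrades to positive lower \emph{axial} density, in the sense that the indices $k$ with $v_k\in \calC$ and $p_k\in \G_\calC K$ have positive lower density in $\N$. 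Pigeonhole on the finitely many vertices of $\calC$ and the finitely many $\ov\G$-orbit representatives in $K$ then produces indices $0\leq i<j$, uniformly bounded apart, with $v_i=v_j$ and $d_\calX(p_j,hp_i)=O(1)$ for some $h\in \ov\G$. The subpath $\om_{[g]}$ of $\Om_j$ strictly between these positions is then a closed loop in $\Sig_\calC^\ast$ with $\beta(\om_{[g]})=[h]_{\ov\G}$, and its good representative $\gam_{\om_{[g]}}$ shadows the axis segment from $p_i$ to $p_j$ in a uniformly bounded neighborhood. Lemma \ref{lem.psiell_ast} identifies $\ell^{\frakw}_\calX[\beta(\om_{[g]})]$ and $\ell^{\frakw_\ast}_{\calX_\ast}[\beta(\om_{[g]})]$ with the $\frakw$- and $\frakw_\ast$-lengths of $\gam_{\om_{[g]}}$, which are within $O(1)$ of $(j-i)\,\ell^{\frakw}_\calX[g]$ and $(j-i)\,\ell^{\frakw_\ast}_{\calX_\ast}[g]$ respectively. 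The $\min$ in the statement accommodates the two orientations in which the coding may traverse the axis, yielding either $\hat g o$ or $\hat g^{-1}o$ as the approximate image of $\gam^+_{\om_{[g]}}$.

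The main obstacle is arranging $j-i=1$ rather than merely a uniformly bounded $j-i\leq N_0$, since otherwise the additive comparison $|\ell_\calX[g]-\ell_\calX[\beta(\om_{[g]})]|\leq C'$ would fail for conjugacy classes of large translation length: the gap would grow like $(N_0-1)\ell_\calX[g]$. The refinement to $j-i=1$ uses the successor map $k\mapsto v_{k+1}$ induced on the finite state space by $\hat g$-iteration along the axis. Since $(v_k)_k$ is eventually periodic, we may take $i$ within the periodic portion and $j=i+1$, provided the first-return $v_{i+1}$ already lies in $\calC$. The positive axial density secured by the extension-lemma argument, which is what essentially uses the contracting hypothesis, guarantees that this first-return property holds for infinitely many choices of $i$; choosing any such $i$ completes the construction and yields the uniform $C'$.
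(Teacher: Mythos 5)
The construction you sketch differs substantially from the paper's argument and has several genuine gaps.

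\textbf{The ``positive axial density'' claim is unjustified.} Lemma~\ref{lem.upperdensity} gives positive lower density of $\G_\calC$ in combinatorial balls of $\calX$; it says nothing about density along a single axis. You assert that the contracting-element hypothesis and ``bounded-projection property'' upgrade this to positive density of the indices $k$ with $v_k\in\calC$ and $p_k\in\G_\calC K$ along the orbit $(p_k)_k$, but no argument is given, and this is precisely the hard part. The paper handles this via \cite[Thm.~C]{yang} (the extension lemma for proper cocompact actions with a contracting element): the set $\calV_g$ of elements $k$ for which \emph{no} geodesic roughly from $o$ to $ko$ admits $s\in\G$ with both $so$ and $sgo$ $\ep$-close to it has \emph{zero} density, so it must miss the positive-density set $\G_\calC$. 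This produces, for some $\om'\in\Sig_\calC^\ast$ and $s\in\G$, a good representative $\gam_{\om'}$ passing near $so$ and $sgo$ simultaneously. Your axial-density claim would require a separate argument of comparable depth, and it is not clear it is even true in the form stated.

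\textbf{The pigeonhole cannot be refined to $j-i=1$ as claimed.} You correctly note that pigeonhole only gives $j-i$ uniformly bounded, which would ruin the additive estimates. Your fix appeals to ``eventual periodicity of $(v_k)_k$'' and a ``successor map $k\mapsto v_{k+1}$ induced on the finite state space.'' But $v_{k+1}$ (the terminal state of a coding path $\Om_{k+1}$ for the word $w_{k+1}$) is \emph{not} a function of $v_k$: it depends on the whole word $w_{k+1}$, and the automaton $\calA_\calX$ is only uniformly finite-to-one, not deterministic. There is no well-defined successor map, hence no eventual periodicity. The paper sidesteps this entirely because Yang's theorem produces a subpath of a \emph{single} $\om'\in\Sig_\calC^\ast$ lying between the near-points of $so$ and $sgo$; closing this up by recurrence of $\calC$ adds only an error $\leq M_1$ to $\ell_\calX$, and no renormalization by $(j-i)$ arises.

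\textbf{Subpaths are being taken across distinct codings.} You define $v_k$ as the terminal vertex of $\Om_k$, then speak of ``the subpath of $\Om_j$ strictly between these positions.'' But $\Om_i$ and $\Om_j$ are different paths in $\calG_\calX$ coding different words $w_i\neq w_j$ (geodesics from $o$ to $p_i$ and to $p_j$ respectively). The equality $v_i=v_j$ of \emph{terminal} states of two different paths does not make $v_i$ an intermediate vertex of $\Om_j$, so there is no loop inside $\Om_j$ to extract. One would need $v_i$ to appear at the correct intermediate position of $\Om_j$, which is a different condition requiring the two geodesics to share a long prefix, and this is not established.

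In summary, the paper's proof does not work with the axis and the $p_k$'s at all; instead it uses Yang's extension lemma to produce, in one step, an automaton path in $\calC$ whose good representative passes near both $so$ and $sgo$, and then closes that segment up inside $\calC$. Your proposal attempts to substitute a pigeonhole-on-axis argument, but the key density transfer, the $j-i=1$ refinement, and the loop extraction are all gaps.
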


\begin{proof}
For the proof we consider the following constants. Let $M_1$ be such that any two vertices in $\calC$ can be joined by a path in $\calC$ of length at most $M_1$ (in both directions). This number exists since $\calC$ is recurrent. Also, the projection $\phi:\calZ \ra \calX$ is a quasi-isometry since it is $\G$-equivariant and the action of $\G$ on both $\calZ$ and $\calX$ is proper and cocompact, so let $M_2>0$ be such that 
    \[d_\calZ(x,y)\leq M_2d_\calX(\phi(x),\phi(y))+M_2\]
for all $x,y\in \calZ$. In addition, let $M_3$ be the diameter of $\phi^{-1}(K)\subset \calZ$ and fix a constant $M_4$ larger than $N$ and the diameter of $K$. Finally, let $L$ be the maximum of all the weights $\frakw(\frakh)$ or $\frakw_\ast(\frakh_\ast)$ among hyperplanes $\frakh \in \calX$ and $\frakh_\ast \in\calX_\ast$, so that the four functions 
\[\phi: \calZ \ra \calX^\frakw, \ \ \phi_\ast: \calZ \ra \calX_\ast^{\frakw_\ast}, \ \ \textnormal{Id}:\calX \ra \calX^\frakw, \ \ \textnormal{Id}:\calX_\ast \ra \calX_\ast^{\frakw_\ast}\]
are $L$-Lipschitz.

 We now start the proof, so we let $g\in \G$ represent the non-torsion conjugacy class $[g]\in\conj(\G)$. Then $g$ fixes a bi-infinite combinatorial axis $\wtilde\lam$ in the cubical barycentric subdivision $\dot{\calZ}$  \cite[Thm.~1.4]{haglund}. After conjugating by an element of $\G$ we can assume that $d_\calZ(\wtilde o,\wtilde\lam)\leq M_3$, so in particular we have $|d_\calZ(\wtilde o,g \wtilde o)-\ell_\calZ[g]|\leq 2M_3.$

By Lemma \ref{lem.geodesictogeodesic} and the fact that $\phi,\phi_\ast$ are Lipschitz, the images $\lam=\phi(\wtilde \lam)$ and $\lam_\ast=\phi_\ast(\wtilde \lam)$ are also $g$-invariant (unparametrized) geodesics satisfying $d^\frakw_\calX( o,\lam)\leq LM_3$ and $d^{\frakw_\ast}_{\calX_\ast}(o_\ast,\lam_\ast)\leq LM_3$, which gives us
\begin{equation}\label{ineq.first}
|d^\frakw_\calX(o,g o)-\ell^\frakw_\calX[g]|\leq 2LM_3 \ \  \text{ and } \ \  |d^{\frakw_\ast}_{\calX_\ast}(o_\ast,g o_\ast)-\ell^{\frakw_\ast}_{\calX_\ast}[g]|\leq 2LM_3. 
\end{equation}

The action of $\G$ on $\calX$ is proper, cocompact and has a contracting element, and hence by \cite[Thm.~C]{yang} there exists a constant $\ep>0$ satisfying the following for any $h\in \G$. Let $\calV_h$ denote the set of all the group elements $k\in \G$ such that if $\gam \subset \calX$ is a combinatorial geodesic path with endpoints $\gam^{\pm}$ verifying $d_\calX(\gam^-,o)\leq M_4$ and $d_\calX(\gam^+,ko)\leq M_4$, then there exists no $s\in \G$ such that $d_\calX(so,\gam)\leq \ep$ and $d_\calX(sho,\gam)\leq \ep$. Then 
\[\lim_{n\to \infty}{\frac{\#(\calV_h\cap B_n)}{\#B_n}}=0\]
(the freedom in our choice for $M_4$ comes from the Remark after Theorem~C in \cite{yang}). In virtue of Lemma \ref{lem.upperdensity} we conclude that the set $\G_\calC \bs \calV_h$ is non-empty for every $h\in \G$. Applying this to $h=g$ we deduce the existence of a path $\om'\in \Sig_\calC^\ast$ and $s\in \G$ such that 
$d_\calX(so,\gam_{\om'})\leq \ep$ and $d_\calX(sgo,\gam_{\om'})\leq \ep$. 

Let $u,v\in \gam_{\om'}$ be such that $d_\calX(so,u)\leq \ep$ and $d_\calX(sgo,v)\leq \ep$, and without loss of generality assume that $u$ belongs to the portion of $\gam_{\om'}$ from $\gam^-_{\om'}$ to $v$. Let $\om=\om_{[g]}\in \Sig_\calC^\ast$ be a closed path composed by the concatenation of the subpath $\ov\om'$ of $\om'$ that determines the portion of $\gam_{\om'}$ from $u$ to $v$ and a path in $\Sig_\calC^\ast$ of length at most $M_1$ from the final vertex of $\ov\om'$ to its initial vertex. 
Let $\gam=\gam_{\om}\subset \calX$ be the good representative of $\om$ with 
\begin{equation}\label{ineq.N2}
  L^{-1}d^\frakw_\calX(\gam^-,o)\leq d_\calX(\gam^-,o) \leq  N,  
\end{equation} and let $s'\in \ov\G$ be such that $\gam^-=s'u$. This implies \begin{equation}\label{ineq.dx2}
L^{-1}d^\frakw_\calX(s'so,\gam^-)\leq d_\calX(s'so,\gam^-)\leq \ep \ \text{ and }\  L^{-1}d^\frakw_\calX(s'sgo,\gam^+)\leq d_\calX(s'sgo,\gam^+)\leq \ep+M_1.\end{equation}
Since $\om$ is a loop we have $\gam^+=q\gam^-$ for $[q]=\beta(\om)\in \conj(\ov\G)$, and in particular from \eqref{ineq.first} we get 
\[|\ell^\frakw_\calX[g]-\ell^\frakw_\calX[\beta(\om)]|\leq L(2\ep+M_1+2M_3).\]
Also, for $k\geq 1$ let $\om^{(k)}\in \Sig_\calC^\ast$ be the concatenation of $k$ copies of $\om$, and let $\gam^{(k)}\subset \calX$ be a good representative of $\om^{(k)}$ so that $(\gam^{(k)})^{-}=\gam^{-}$ and $(\gam^{(k)})^{+}=q^k\gam^{-}$. Note that $\gam^{(k)}$ is always a subpath of $\gam^{(k+1)}$.

Now we lift each $\gam^{(k)}$ to $\calZ$ to get a sequence $\wtilde \gam^{(k)}$ of geodesic paths in $\calZ$. Then $\phi(\wtilde \gam^{(k)})=\gam^{(k)}$ (up to parametrization), so that $(\gam^{(k)})^\pm=\phi((\wtilde \gam ^{(k)})^\pm)$ for all $k$. We denote $\wtilde \gam^\pm=(\wtilde \gam^{(1)})^\pm$ and we assume $(\wtilde \gam^{(k)})^-= \wtilde \gam^{-}$ for all $k$. 
By $\G$-equivariance of $\phi$ we have $\phi((\wtilde \gam^{(k)})^+)=(\gam^{(k)})^+=q^k\gam^-=\phi(q^k\wtilde\gam^{-} )$, and hence  
\begin{equation}\label{ineq.sss}
    d_\calZ((\wtilde \gam^{(k)})^+,q^k\wtilde\gam^{-})\leq M_3
\end{equation} for all $k$. Also, the inequalities \eqref{ineq.dx2} imply
\begin{equation}\label{ineq.dZ}
    d_\calZ(s's\wtilde o,\wtilde\gam^-)\leq M_2\ep+M_2 \ \ \text{ and }\ \ d_\calZ(s'sg\wtilde o,\wtilde\gam^+)\leq M_2(\ep+M_1)+M_2.
\end{equation}
We project the geodesics $\wtilde \gam^{(k)}$ to $\calX_\ast^{\frakw_\ast}$ via $\phi_\ast$, so we consider $\gam_\ast^{(k)}:=\phi_\ast(\wtilde \gam^{(k)})$, which are (unparametrized) geodesics by Lemma \ref{lem.geodesictogeodesic}, and as before we denote $ \gam_\ast^\pm=\phi_\ast(\gam_\ast^\pm)$.

The length of $\gam^{(k)}_\ast$ in $\calX_\ast^{\frakw_\ast}$ equals $\ov\frakw_\ast(\al(\om^{(k)}))$, and since the word $\al(\om^{(k)})$ is the concatenation of $k$ copies of $\al(\om)$
we have
\begin{equation}\label{ineq.equality}
    d^{\frakw_\ast}_{\calX_\ast}(\gam_\ast^{-},(\gam_\ast^{(k)})^{+})=kd^{\frakw_\ast}_{\calX_\ast}(\gam_\ast^{-},\gam_\ast^{+})
\end{equation}
for all $k$. In addition, by \eqref{ineq.sss} and \eqref{ineq.dZ} we obtain
\begin{equation}\label{ineq.sss*}
        d^{\frakw_\ast}_{\calX_\ast}((\gam_\ast^{(k)})^+,q^k\gam^{-}_\ast)\leq L M_3,
\end{equation}
and
\begin{equation}\label{ineq.dX*}
    d^{\frakw_\ast}_{\calX_\ast}(s's o_\ast,\gam_\ast^-)\leq LM_2(\ep+1) \  \ \text{ and } \ \ d^{\frakw_\ast}_{\calX_\ast}(s'sg o_\ast,\gam^+_\ast)\leq LM_2(\ep+M_1+1).
\end{equation}
From these inequalities and \eqref{ineq.first} we get 
\begin{align*}
    \ell^{\frakw_\ast}_{\calX_\ast}[q]  \leq d^{\frakw_\ast}_{\calX_\ast}(\gam_\ast^-,q\gam_\ast^-) & \leq d^{\frakw_\ast}_{\calX_\ast}(\gam_\ast^-,\gam_\ast^+)+LM_3 \\
    & \leq d^{\frakw_\ast}_{\calX_\ast}(o_\ast, g o_\ast)+L(M_3+M_2(2\ep+M_1+2))\\
    & \leq \ell^{\frakw_\ast}_{\calX_\ast}[g]+L(3M_3+M_2(2\ep+M_1+2)).
\end{align*}
On the other hand, \eqref{ineq.equality} and \eqref{ineq.sss*} imply
\begin{equation*}
    kd^{\frakw_\ast}_{\calX_\ast}(\gam_\ast^-,\gam_\ast^+)=d^{\frakw_\ast}_{\calX_\ast}(\gam_\ast^-,(\gam_\ast^{(k)})^+) \leq d^{\frakw_\ast}_{\calX_\ast}(\gam_\ast^-,q^k\gam_\ast^-)+LM_3,
\end{equation*}
and after dividing by $k$ and letting $k$ tend to infinity we get
\begin{equation*}
    d^{\frakw_\ast}_{\calX_\ast}(\gam_\ast^-,\gam_\ast^+)\leq \ell^{\frakw_\ast}_{\calX_\ast}[q].
\end{equation*}
Combining this inequality with \eqref{ineq.first} and \eqref{ineq.dX*} gives us
\begin{align*}
    \ell^{\frakw_\ast}_{\calX_\ast}[g] & \leq d^{\frakw_\ast}_{\calX_\ast}(s's o_\ast, s's g o_\ast)+2LM_3 \\
    & \leq d^{\frakw_\ast}_{\calX_\ast}(\gam_\ast^-,\gam_\ast^+)+L(2M_3+M_2(2\ep+M_1+2)) \\
    & \leq \ell^{\frakw_\ast}_{\calX_\ast}[q]+L(2M_3+M_2(2\ep+M_1+2)),
\end{align*}
and we deduce
\begin{equation*}
    |\ell^{\frakw_\ast}_{\calX_\ast}[g]-\ell^{\frakw_\ast}_{\calX_\ast}[\beta(\om)]|=|\ell^{\frakw_\ast}_{\calX_\ast}[g]-\ell^{\frakw_\ast}_{\calX_\ast}[q]|\leq L(3M_3+M_2(2\ep+M_1+2)).
\end{equation*}
Finally, if we define $\hat{g}=s'sg(s's)^{-1}\in [g]$, then by \eqref{ineq.N2} and 
\eqref{ineq.dx2} we get $$d_\calX(\hat g o, \gam^+)\leq d_\calX(\hat g o, gs'so)+\ep+M_1 \leq d_\calX((s's)^{-1}o,o)+\ep+M_1\leq 2\ep+M_1+N.$$
In conclusion, the lemma follows with $C'=(L+1)(N+3M_3+M_2(2\ep+M_1+2))$.
\end{proof}




\begin{proof}[Proof of Proposition \ref{prop.manattanpressure}]
    For each $s\in \R$ and $R >0$ we consider the sums
    \[
\calP(R, T,s) = \sum_{|\ell^\frakw_\calX[g] - T| \le R} e^{-s\ell^{\frakw_\ast}_{\calX_\ast}[g]} \ \text{ and } \ \calP_\Cc(R, T,s) = \sum_{\tau \in P(\Sigma_\Cc^{r_\calX^\frakw},R,T)} e^{-s \int_\tau{\Phi}}.
    \]
    Since $l_\tau=\ell_\calX^\frakw[\beta(\tau)]$ for any closed orbit $\tau$, by Lemmas \ref{lem.psiell_ast} and \ref{lem.polyn} there exists a polynomial $Q$ depending only on $R$ such that $\calP_\Cc(R, T,s) \le  Q(T)\calP(R, T,s)$ for each $s\in \R$ and $T>0$.  
    
   For an inequality in the other direction, for any $[g]$ we use Lemma \ref{lem.goodpath} to find a path $\o_{[g]} \in P(\Sigma_\Cc^\ast)$ and a representative $\hat g$ of $[g]$ satisfying 
   \begin{equation}\label{eq.om_gused}
       \min \{d_{\calX}(\hat g o, \gam^+_{\o_{[g]}}),d_{\calX}(\hat g^{-1} o, \gam^+_{\o_{[g]}})\} \leq C'
   \end{equation}
   and 
     \begin{equation}\label{eq.om_used2}
           \max\{|\ell^\frakw_{\calX}[g]-\ell^\frakw_\calX[\beta(\om_{[g]})]|,  |\ell^{\frakw_\ast}_{\calX_\ast}[g]-\ell^{\frakw_\ast}_{\calX_\ast}[\beta(\om_{[g]})]|\}\leq C'.   
     \end{equation} 
    From \eqref{eq.om_gused} we get that the association $[g] \mapsto \o_{[g]}$ is uniformly finite-to-$1$. We extend this association to $[g]\mapsto \om_{[g]} \mapsto \tau_{[g]}$, where $\tau_{[g]}\in P(\Sig_\calC^{r_\calX^\frakw})$ is the periodic orbit corresponding to the path $\om_{[g]}$. Since changing the initial vertex of a closed path in $P(\Sig_\calC^\ast)$ does not change the periodic orbit in $P(\Sig_\calC^{r_\calX^\frakw})$, the association $\om_{[g]} \mapsto \tau_{[g]}$ is at most (linear in $\ell_\calX[g]$)-to-1. But $\ell_\calX[g]$ is comparable to $\ell^{\frakw}_\calX[g]=\ell^\frakw_\calX[\beta(\tau_{[g]})]=l_{\tau_{[g]}}$ (recall that $\calX$ and $\calX^\frakw$ are quasi-isometric), and so from \eqref{eq.om_used2} we deduce that for each $s \in \R$ there is $C_s >0$ such that   $\calP(R,T,s) \le C'_s F(T)\calP_\Cc(R+C',T,s)$ for each $T > 0$, where $F$ is a degree 1 polynomial depending only on $R$. 
    
    It follows that for any fixed $R$ sufficiently large and for any  $s \in \R$
    \begin{equation}\label{eq.press}
        \lim_{T\to\infty} \frac{1}{T} \log \calP(R,T,s) =\lim_{T \to \infty} \frac{1}{T} \log \left( \sum_{\tau \in P(\Sigma^{r_\calX^\frakw}_\Cc,R,T)}e^{-s \int_\tau{\Phi}}\right)=\textnormal{P}_\Cc(-s\Phi),
    \end{equation}
where $\text{P}_\Cc(-s\Phi)$ is the pressure of the potential $-s\Phi$ on the suspension $(\Sig^{r_\calX^\frakw}_\calC,\sig^{r_\calX^\frakw})$. 

Also, note that
\[
\sum_{[g] \in \conj(\G)} e^{- t\ell^\frakw_\calX[g] - s\ell_{\calX^{\frakw_\ast}_\ast}[g]} \le e^{R|t|} \sum_{T = 1}^\infty \calP(R,T,s) e^{-tT}
\]
assuming the right-hand side of the above converges.
Similarly we have 
\[
 \sum_{T = 1}^\infty \calP(R,T,s) e^{-tT} \le 
 2R e^{R|t|}\sum_{[g] \in \conj(\G)} e^{- t\ell^\frakw_\calX[g] - s\ell^{\frakw_\ast}_{\calX_\ast}[g]} 
\]
when the right-hand side converges. We deduce that for each $s \in \R$ the series
\[
 \sum_{T = 1}^\infty \calP(R,T,s) e^{-tT} \ \text{ and } \ \sum_{[g] \in \conj(\G)}e^{- t\ell^\frakw_\calX[g] - s\ell^{\frakw_\ast}_{\calX_\ast}[g]} 
\]
have the same abscissa of convergence as $t$ varies.
Hence by $\eqref{eq.press}$ we deduce
\[
\theta_{\calX^{\frakw_\ast}_\ast/\calX^{\frakw}}(s) = \text{P}_\Cc(-s\Phi),
\]
as desired.
\end{proof}


\subsection{Analyticity and Large deviation for pairs of cubulations}\label{subsec.analytic+LDcub}

In this section we prove Theorems \ref{thm.manhattanfactsX} and \ref{thm.cubulation}. For a triple $(\G,\calX,\calX_\ast)\in \frakX$ we always assume that it satisfies Convention \ref{conv.X_ast}, which is possible by Lemma \ref{lem.preparations}. In particular, all the results and notations from this and the previous section are valid for this triple. We first prove a large deviation principle that follows from Proposition \ref{prop.manattanpressure}.

\begin{corollary}\label{coro.ldmanhattan}
    Let $(\G,\calX,\calX_\ast)\in \frakX$ and $\frakw,\frakw_\ast$ be  $\G$-invariant orthotope structures on $\calX,\calX_\ast$, and let $\calL : [\Dil(\calX^\frakw, \calX^{\frakw_\ast}_\ast)^{-1}, \Dil(\calX^{\frakw_\ast}_\ast, \calX^{\frakw}) ] \to \R$ be the Legendre transform of $\theta_{\calX^{\frakw_\ast}_\ast/\calX^\frakw}$. Then for any non-empty open set $U \subset \R$ and closed set $V \subset \R$ with $U \subset V$ we have that
    \begin{align*}
        -\inf_{s \in U} \calL(s) &\le \liminf_{T\to\infty} \frac{1}{T} \log\left( \frac{1}{\#\mathfrak{C}_{\calX^\frakw}(T)} \#\left\{[g] \in \mathfrak{C}_{\calX^\frakw}(T): \frac{\ell^{\frakw_\ast}_{\calX_\ast}[g]}{ \ell^\frakw_\calX[g]}\in U \right\}\right)\\
        &\le \limsup_{T\to\infty} \frac{1}{T} \log\left( \frac{1}{\#\mathfrak{C}_{\calX^\frakw}(T)} \#\left\{[g] \in \mathfrak{C}_{\calX^\frakw}(T): \frac{\ell^{\frakw_\ast}_{\calX_\ast}[g]}{ \ell^\frakw_\calX[g]}\in V \right\}\right) \le -\inf_{s \in V} \calL(s).
    \end{align*}
In consequence, the limit
\[
\tau(\calX^{\frakw_\ast}_\ast/\calX^\frakw) :=
\lim_{T\to\infty} \frac{1}{\#\mathfrak{C}_{\calX^\frakw}(T)} \sum_{[g] \in \frakC_{\calX^\frakw}(T)} \frac{\ell^{\frakw_\ast}_{\calX_\ast}[g]}{\ell^\frakw_\calX[g]}
\]
exists and equals $-\thet'_{\calX^{\frakw_\ast}_\ast/\calX^\frakw}(0)$.
\end{corollary}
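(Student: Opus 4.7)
The plan is to deduce the corollary from Proposition \ref{prop.manattanpressure} together with the classical large deviation principle for suspension flows over transitive subshifts of finite type. First I would observe that by Proposition \ref{prop.manattanpressure}, we have $\thet_{\calX^{\frakw_\ast}_\ast/\calX^\frakw}(s) = \textnormal{P}_\calC(-s\Phi)$, so the Legendre transform $\calL$ coincides with the rate function associated to the flow $(\Sig_\calC^{r_\calX^\frakw},\sig^{r_\calX^\frakw})$ and potential $\Phi$. Standard thermodynamic formalism (as recalled in Sections \ref{sec.sd}--\ref{sec.sf}, extended to transitive systems as in Remark \ref{rem.transitive}) then yields, for any open $U$ and closed $V$ with $U\subset V \subset \R$,
\[
-\inf_{s\in U}\calL(s) \leq \liminf_{T\to\infty} \frac{1}{T}\log\frac{\#\bigl\{\tau\in P(\Sig_\calC^{r_\calX^\frakw},T):\tfrac{1}{l_\tau}\int_\tau \Phi \in U\bigr\}}{\#P(\Sig_\calC^{r_\calX^\frakw},T)}
\]
and the matching $\limsup$ bound with $V$ in place of $U$.

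Next I would translate this statement from periodic orbits of the suspension into a statement about conjugacy classes in $\G$, via the map $\beta:P(\Sig_\calC^{r_\calX^\frakw})\to\conj(\ov\G)$. The crucial facts, established in Lemma \ref{lem.psiell_ast} and the preceding discussion, are that $l_\tau = \ell^\frakw_\calX[\beta(\tau)]$ and $\int_\tau \Phi = \ell^{\frakw_\ast}_{\calX_\ast}[\beta(\tau)]$, so the ratio $\tfrac{1}{l_\tau}\int_\tau\Phi$ is exactly $\ell^{\frakw_\ast}_{\calX_\ast}[\beta(\tau)]/\ell^\frakw_\calX[\beta(\tau)]$. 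The overcounting in the map $[g]\mapsto \tau_{[g]}$ constructed in Lemma \ref{lem.goodpath}, together with the polynomial bound from Lemma \ref{lem.polyn}, is sub-exponential in $T$ and appears identically in both numerator and denominator, so it cancels in the logarithmic limit; the bounded discrepancies between $(\ell_\calX^\frakw[g],\ell_{\calX_\ast}^{\frakw_\ast}[g])$ and $(l_{\tau_{[g]}},\int_{\tau_{[g]}}\Phi)$ provided by Lemma \ref{lem.goodpath} can be absorbed by an arbitrarily small thickening of $U$ or $V$, and then removed by exploiting openness of $U$ and the continuity properties of $\calL$. The passage from conjugacy classes of $\ov\G$ to those of $\G$ introduces only finite multiplicity (since $[\G:\ov\G]<\infty$) and preserves translation lengths on appropriate powers, hence is inconsequential at the exponential scale.

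For the second part, I would extract the existence and identification of $\tau(\calX^{\frakw_\ast}_\ast/\calX^\frakw)$ from the large deviation bounds just proved. By the variational principle, $-\thet'_{\calX^{\frakw_\ast}_\ast/\calX^\frakw}(0)$ equals the integral of $\Phi$ against the measure of maximal entropy of the suspension, which is the unique zero of $\calL$ in the interior of its domain of finiteness. Applied to small intervals $U=(\eta-\epsilon,\eta+\epsilon)$ around any $\eta\neq -\thet'(0)$, the upper bound forces exponential decay of the relative count of conjugacy classes whose ratio $\ell^{\frakw_\ast}_{\calX_\ast}[g]/\ell^\frakw_\calX[g]$ lies in $U$, while the ratio is uniformly bounded in $[\Dil(\calX^\frakw,\calX_\ast^{\frakw_\ast})^{-1},\Dil(\calX_\ast^{\frakw_\ast},\calX^\frakw)]$. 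A standard dominated-convergence argument (splitting the sum into a small window around $-\thet'(0)$ and an exponentially negligible complement) then yields
\[
\lim_{T\to\infty}\frac{1}{\#\frakC_{\calX^\frakw}(T)}\sum_{[g]\in \frakC_{\calX^\frakw}(T)}\frac{\ell^{\frakw_\ast}_{\calX_\ast}[g]}{\ell^\frakw_{\calX}[g]} = -\thet'_{\calX^{\frakw_\ast}_\ast/\calX^\frakw}(0).
\]

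The main obstacle I anticipate is the careful reconciliation between counts over conjugacy classes of $\G$ and over periodic orbits of the suspension: one must verify that all overcounting factors (finite-to-one maps, bounded shifts from Lemma \ref{lem.goodpath}, index of $\ov\G$ in $\G$, and the polynomial estimates of Lemma \ref{lem.polyn}) contribute only sub-exponentially, and hence survive the normalization by $\#\frakC_{\calX^\frakw}(T)$. Once this bookkeeping is handled, both the large deviation principle and the identification of $\tau$ as $-\thet'(0)$ are standard consequences of the pressure identity from Proposition \ref{prop.manattanpressure}.
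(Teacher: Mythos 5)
Your proposal is correct, and at a high level it mirrors the paper's reliance on Proposition \ref{prop.manattanpressure}, but it organizes the argument differently. You want to first invoke a large deviation principle for periodic orbits of the suspension flow $(\Sig_\calC^{r_\calX^\frakw},\sig^{r_\calX^\frakw})$ and potential $\Phi$, and then translate that statement back to conjugacy classes of $\G$ by a second pass through $\beta$, Lemma \ref{lem.psiell_ast}, Lemma \ref{lem.polyn}, and Lemma \ref{lem.goodpath}. The paper takes a shorter route: in the proof of Proposition \ref{prop.manattanpressure} it has already established that the conjugacy-class Laplace transform
\[
\calP(R,T,s)=\sum_{|\ell^\frakw_\calX[g]-T|\le R}e^{-s\ell^{\frakw_\ast}_{\calX_\ast}[g]}
\]
satisfies $\lim_{T\to\infty}\tfrac{1}{T}\log\calP(R,T,s)=\theta_{\calX^{\frakw_\ast}_\ast/\calX^\frakw}(s)$, and so it applies the G\"artner--Ellis theorem directly to this scaled cumulant generating function (for the measures counting conjugacy classes with $\ell^\frakw_\calX[g]\approx T$), yielding the LDP over $\mathfrak{C}_{\calX^\frakw}(T,R)$ and then over $\mathfrak{C}_{\calX^\frakw}(T)$. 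This avoids re-running the orbit-to-conjugacy-class comparison: all the bounded distortions and sub-exponential multiplicities you flag as potential obstacles were already absorbed once in the proof of Proposition \ref{prop.manattanpressure}, so they never resurface. Your route is sound provided you justify the periodic-orbit LDP for the suspension flow (which itself is a G\"artner--Ellis consequence of the ``standard result'' recalled at the end of Section \ref{sec.sf}) and carry out the thickening-and-cancellation bookkeeping carefully; the paper's route buys economy by doing G\"artner--Ellis on the object already phrased in terms of $\conj(\G)$. Your treatment of the second assertion (concentration around $-\theta'(0)$ and a dominated-convergence split) matches the paper's argument.
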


\begin{proof}
Recall that for $T>R >0$ and $s \in \R$ we defined
    \[
\calP(R, T,s) = \sum_{|\ell^\frakw_\calX[g] - T| \le R} e^{-s\ell^{\frakw_\ast}_{\calX_\ast}[g]} 
\]
in the proof of Proposition \ref{prop.manattanpressure} above. We saw during that proof that
\[
     \lim_{T\to\infty} \frac{1}{T} \log \calP(R,T,s) = \theta_{\calX^{\frakw_\ast}_\ast/\calX^\frakw}(s).
\]
It follows from the G\"artner-Ellis Theorem \cite[Thm.~2.3.6]{G.E} that the large deviation principle stated in this corollary holds but with $\mathfrak{C}_{\calX^\frakw}(T)$ replaced by
\[
\mathfrak{C}_{\calX^\frakw}(T, R) = \{ [g]\in \conj \colon |\ell^\frakw_\calX[g] - T| < R\}
\]
for any fixed $R >0$ sufficiently large. It is then easy to check that this large deviation principle implies the one stated in the corollary.

By this large deviation principle we know that for any $\epsilon >0$ the cardinality of the set
\[
E_\epsilon(T): = \left\{[g] \in \mathfrak{C}_{\calX^\frakw}(T): \left|\frac{\ell^{\frakw_\ast}_{\calX_\ast}[g]}{ \ell^\frakw_\calX[g]} + \theta'_{\calX^{\frakw_\ast}_\ast/\calX^\frakw}(0) \right| > \epsilon \right\}
\]
grows strictly exponentially slower than $\#\mathfrak{C}_{\calX^\frakw}(T)$ as $T\to\infty$, i.e. the quotient $\#E_\epsilon(T)/\#\mathfrak{C}_{\calX^\frakw}(T)$ decays to $0$ exponentially as $T\to\infty$. 
It is then standard to deduce that
\[
\tau(\calX_\ast^{\frakw_\ast}/\calX^\frakw) :=
\lim_{T\to\infty} \frac{1}{\#\mathfrak{C}_{\calX^\frakw}(T)} \sum_{[g] \in \frakC_{\calX^\frakw}(T)} \frac{\ell^{\frakw_\ast}_{\calX_\ast}[g]}{\ell^\frakw_\calX[g]}
\]
exists and is equal to $ -\theta_{\calX^{\frakw_\ast}_\ast/\calX^\frakw}'(0)$ as required.
\end{proof}

\begin{proof}[Proof of Theorem \ref{thm.manhattanfactsX}]
We showed in Proposition \ref{prop.manattanpressure} that $\theta_{\calX^{\frakw_\ast}_\ast/\calX^\frakw}(s)$ is equal to the pressure $\textnormal{P}_\Cc(-s\Phi)$ for any $s$. It follows that $\theta_{\calX^{\frakw_\ast}_\ast/\calX^\frakw}$ is analytic, convex and decreasing.
Also, by Corollary \ref{coro.ldmanhattan} we know that the limit labeled $\tau(\calX^{\frakw_\ast}_\ast/\calX^\frakw)$ in the theorem exists. 
Further by comparing the exponential growth rates of both sides of the inequality
\small
\[
\#\left\{[g] \in \mathfrak{C}_{\calX^\frakw}(T): \left|\frac{\ell^{\frakw_\ast}_{\calX_\ast}[g]}{ \ell^\frakw_\calX[g]} - \tau(\calX^{\frakw_\ast}_\ast/\calX^\frakw) \right| \le \epsilon 
\right\} \le \#\left\{ [g] \in \mathfrak{C}_{\calX^\frakw}(T) : \ell^{\frakw_\ast}_{\calX_\ast}[g] \le (\tau(\calX^{\frakw_\ast}_\ast/\calX^\frakw) + \epsilon) T\right\}
\]
\normalsize
we see that
\[
\tau(\calX^{\frakw_\ast}_\ast/\calX^{\frakw}) \ge \frac{v_{\calX^\frakw}}{v_{\calX^{\frakw_\ast}_\ast}}.
\]
Therefore to conclude the proof we need to check the equivalence of the statements $(1)$, $(2)$ and $(3)$ when the action of $\G$ on $\calX_\ast$ (and hence on $\calX_\ast^{\frakw_\ast}$) is proper. When this is the case we have $0<v_{\calX^{\frakw_\ast}_\ast} < \infty$ and the Manhattan curve $ \theta_{\calX^{\frakw_\ast}_\ast/\calX^\frakw}(s)$ is $0$ at $s=v_{\calX^{\frakw_\ast}_\ast}.$

We will prove the implications $(1) \Ra (3) \Ra (2) \Ra (1)$. 
Note that the implication $(1) \Ra (3)$ follows easily from the facts that
$\tau(\calX^{\frakw_\ast}_\ast/\calX^\frakw) = - \theta'_{\calX^{\frakw_\ast}_\ast/\calX^\frakw}(0)$, $\theta_{\calX^{\frakw_\ast}_\ast/\calX^\frakw}(0) = v_{\calX^\frakw}, \theta_{\calX^{\frakw_\ast}_\ast/\calX^\frakw}(v_{\calX^{\frakw_\ast}_\ast}) = 0$ and $\theta_{\calX^{\frakw_\ast}_\ast/\calX^\frakw}$ is convex so has non-increasing derivative. Also, the implication $(2) \Ra (1)$ follows from the definition of the Manhattan curve. Hence we just need to prove the implication $(3) \Ra (2).$

To do so we note that
\[
\tau(\calX_\ast^{\frakw_\ast}/\calX^\frakw) = \int_{\Sigma_\Cc^{r_\calX^\frakw}} \Phi \ dm 
\]
where $m$ is the measure of maximal entropy for $(\Sigma_\Cc^{r_\calX^\frakw},\sig^{r_{\calX}^{\frakw}})$. However we saw in Section \ref{sec.sf} that
\[
\int_{\Sigma_\Cc^{r_\calX^\frakw}} \Phi \ dm  = \frac{\int_{\Sigma_\Cc} \psi_{\calX_\ast}^{\frakw_\ast}\ d\mu_1}{\int_{\Sigma_\Cc} r_\calX^\frakw \ d\mu_1}
\]
where $\mu_1$ is the equilibrium state of $- \delta_{r_\calX^\frakw} r_\calX^\frakw$ on $\Sigma_\Cc$. To simplify notation going forward we will also write $r =r_\calX^\frakw$, $\psi =\psi_{\calX_\ast}^{\frakw_\ast} $ and $\mu_2$ for the equilibrium state of $- \delta_{\psi_{\calX_\ast}^{\frakw_\ast}} \psi_{\calX_\ast}^{\frakw_\ast}$ on $\Sigma_\Cc$.
We now note that by Proposition \ref{prop.manattanpressure} we have that
\[
\text{P}_\Cc(-v_{\calX^\frakw} r_\calX^\frakw) = \text{P}_\Cc(-v_{{\calX^{\frakw_\ast}_\ast}} \psi_{\calX_\ast}^{\frakw_\ast}) = 0.
\]
Here the pressures are the pressures of the potentials over the subshift (not suspension).
Hence the inequality $\tau(\calX^{\frakw_\ast}_\ast/\calX^{\frakw})  \ge v_{\calX^\frakw}/v_{\calX^{\frakw_\ast}_\ast}$ can be rewritten as
\[
\frac{h_{\mu_2}(\sigma)}{\int_{\Sigma_\Cc} \psi \ d\mu_2} \ge \frac{h_{\mu_1}(\sigma)}{\int_{\Sigma_\Cc} \psi \ d\mu_1}
\]
where $h_{\mu_1}(\sigma), h_{\mu_2}(\sigma)$ are the entropies of $\mu_1, \mu_2$ over the component $\Cc$. This inequality is true by the variational principle. Furthermore this inequality is a strict equality unless $r$ and $\psi$ are cohomologous. This implies by Lemmas \ref{lem.psiell_ast} and \ref{lem.goodpath} that there exist $\Lambda, C >0$ such that 
\[
|\ell^{\frakw}_{\calX^\frakw}[g] - \Lambda \ell^{\frakw_\ast}_{\calX_\ast^{\frakw_\ast}}[g]| < C
\]
for all $[g] \in \conj(\G)$.
This can only happen if $(2)$ holds.
\end{proof}

\begin{proof}[Proof of Theorem \ref{thm.cubulation}]
Let $\psi=\psi_{\calX_\ast}^{\frakw_\ast}:\Sig\ra \Z$ be the potential associated to the constant orthotope structure $\frakw_\ast \equiv 1$. Let $\calL:[\Dil(\calX, \calX_\ast)^{-1}, \Dil(\calX_\ast, \calX) ]\ra \R$ be the Legendre transform of $\thet_{\calX_\ast/\calX}$, which by Remark \ref{rmk.pressure} equals the Legendre transform of $s \mapsto  \textnormal{P}_\Cc(-s\psi)$ for $\calC$ any maximal recurrent component of $\calG_\calX$. Hence $\calL$ is analytic. 

From our large devation principle in Corollary \ref{coro.ldmanhattan} we have that
\[
\limsup_{T\to\infty} \frac{1}{T} \log \left(\#\left\{ [g] \in \conj: \ell_\calX[g] < T, | \ell_{\calX_\ast}[g]  - \eta \ell_{\calX}[g] | < \frac{C}{T} \right\}\right) \le v_\calX - \calL(\eta)
\]
for all $\eta \in (\Dil(\calX, \calX_\ast)^{-1}, \Dil(\calX_\ast, \calX)).$ 

We now prove the lower bound. Fix a maximal component $\Cc$. By Lemma \ref{lem.psiell_ast} and Lemma \ref{lem.polyn} there exists a polynomial $Q$ such that for any $C >0$ and $\eta \in (\Dil(\calX, \calX_\ast)^{-1}, \Dil(\calX_\ast, \calX))$ 
\[
\# \left\{\o \in P_n(\Sigma_\Cc): \left|\frac{\p^n(\o)}{n} - \eta \right| < \frac{C}{n} \right\} \le Q(n) \#\left\{ [g] \in \conj: \ell_\calX[g] \le n, \left| \frac{\ell_{\calX_\ast}[g]}{\ell_\calX[g]}  - \eta \right| < \frac{C}{n} \right\}
\]
where $\p$ is the potential from Definition \ref{def.psicalX}. However, by Theorem \ref{thm.ldtsi} (and Remark \ref{rem.transitive} as $\Cc$ may only be transitive) we have that
\[
\limsup_{n\to\infty} \frac{1}{n} \log \left(\# \left\{\o \in P_n(\Sigma_\Cc): \left|\frac{\p^n(\o)}{n} - \eta \right| < \frac{C}{n} \right\} \right)= h - \calI(\eta)
\]
where $\calI$ is the Legendre transform of the map $s \mapsto  \textnormal{P}_\Cc(-s\psi)$ and $h$ is the topological entropy of the subshift $(\Sigma_\Cc,\sig)$. However, as we saw above, $\calI$ is precisely $\calL$ and further by Lemma \ref{lem.upperdensity} we have $h = v_\calX$. Hence we deduce that 
\[
\limsup_{T\to\infty} \frac{1}{T} \log \#\left\{ [g] \in \conj: \ell_\calX[g] < T, | \ell_{\calX_\ast}[g]  - \eta \ell_{\calX}[g] | < \frac{C}{T} \right\} \ge v_\calX - \calL(\eta)
\]
for each $\eta \in (\Dil(\calX, \calX_\ast)^{-1}, \Dil(\calX_\ast, \calX))$. We have shown that the limit supremum in the statement of the theorem is equal to $v_\calX - \calL$ as required.

To conclude the proof we need to explain the additional conditions mentioned in the theorem. In particular we need to show that
\[
 0 < v_\calX - \calL(\eta) \le v_\calX \text{ for all } \eta \in (\Dil(\calX, \calX_\ast)^{-1}, \Dil(\calX_\ast, \calX))
\]
and that the upper bound inequality is an equality if and only if $\eta = \tau(\calX_\ast/\calX).$ All of these properties follow from the
definition of $\calL$ and the fact that $s\mapsto \textnormal{P}_\Cc(-s\psi)$ is strictly convex. 
\end{proof}

\appendix
 
\section{convex-cocompact subgroups of cubulable relatively hyperbolic groups}\label{sec.appendix}

In this appendix we prove Proposition \ref{prop.criterioncvxcc}. First, we recall the statement.
\begin{proposition}\label{prop.appcriterioncvxcc}
    Let $\G$ be a relatively hyperbolic group acting properly and cocompactly on the $\CAT(0)$ cube complex $\calX$. Then the following are equivalent for a subgroup $H< \G$.
    \begin{enumerate}
        \item $H$ is convex-cocompact for the action on $\calX$.
        \item $H$ is relatively quasiconvex and $H\cap P$ is convex-cocompact for the action of $\G$ on $\calX$ for any maximal parabolic subgroup $P<\G$.    
    \end{enumerate}
\end{proposition}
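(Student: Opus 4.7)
Suppose $H$ acts cocompactly on a convex subcomplex $Z_H \subseteq \calX$. Then $H$ is combinatorially quasiconvex in the $1$-skeleton $\calX^1$, and the orbit map $H \to \G$ is a quasi-isometric embedding in the word metric of $\G$. Absolutely quasiconvex subgroups of a finitely generated relatively hyperbolic group are relatively quasiconvex by Hruska's characterizations, which yields the first clause of (2). For the second clause, each maximal parabolic $P$ admits its own convex core $Z_P \subseteq \calX$ by \cite{sageev-wise}. The intersection $W := Z_H \cap Z_P$ is then a convex $(H \cap P)$-invariant subcomplex, and cocompactness of the $(H \cap P)$-action on $W$ reduces to a double-coset finiteness statement: combining properness of $\G \curvearrowright \calX$, cocompactness on $Z_H$ and $Z_P$, and relative quasiconvexity of $H \cap P$, one obtains finitely many $H \cap P$-orbits of cubes in $W$.

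\textbf{The direction $(2) \Rightarrow (1)$: strategy.} Fix a base vertex $o \in \calX$ and representatives $P_1, \dots, P_k$ of the finitely many $H$-conjugacy classes of maximal parabolics $P$ with $H \cap P$ infinite; finiteness follows from relative quasiconvexity of $H$. By hypothesis each $H \cap P_i$ admits a convex $(H \cap P_i)$-cocompact subcomplex $Y_i \subseteq \calX$. Form the $H$-invariant subcomplex
\[
\mathcal{Y} := H \cdot o \;\cup\; \bigcup_{i=1}^k H \cdot Y_i,
\]
for which $\mathcal{Y}/H$ is compact by construction. Set $Z_H := \Hull_\calX(\mathcal{Y})$, the combinatorial convex hull (i.e.\ the intersection of all half-spaces of $\calX$ containing $\mathcal{Y}$). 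Then $Z_H$ is convex and $H$-invariant, and the goal reduces to showing $Z_H / H$ is compact.

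\textbf{The main obstacle.} The crux is to control vertices of $Z_H \setminus \mathcal{Y}$. Each such vertex lies on some combinatorial geodesic $\gamma$ in $\calX$ between two vertices of $\mathcal{Y}$. Relative quasiconvexity of $H$ implies that the image of $\gamma$ in the cusped space (or coned-off Cayley graph) of $(\G, \{P_i\})$ stays within a uniform neighborhood of $H$. Pulling this back to $\calX$ yields a decomposition of $\gamma$ into subsegments each of which either stays within bounded $\calX$-distance of the $H$-orbit $H \cdot o$, or enters a peripheral coset $g P_i g^{-1}$ where it is trapped near the convex core $g Y_i$ of $g(H \cap P_i)g^{-1}$. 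Since there are only finitely many $H$-orbits of such peripheral pieces, and each contributes only a cocompact collection of cubes by hypothesis, this dichotomy forces $Z_H / H$ to be compact. The hardest step is to make the decomposition of $\gamma$ precise in the cubical setting; I would first establish the analogous statement in the cusped space using Hruska--Wise-style projection estimates, then transfer it to $\calX$ via the quasi-isometry between $\calX^1$ and the cusped space, using fineness of the peripheral structure to control the combinatorial hull of $\mathcal{Y}$ near each peripheral coset.
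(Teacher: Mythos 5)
Your direction $(1)\Rightarrow(2)$ matches the paper's in outline: convex-cocompactness gives undistortion, hence relative quasiconvexity via \cite[Thm.~1.5]{hruska}, and $H\cap P$ is the intersection of two convex-cocompact subgroups (using \cite[Thm.~1.1]{sageev-wise} for $P$). The paper closes the second clause by citing that intersections of convex-cocompact subgroups are convex-cocompact (\cite[Lem.~2.14 \& Lem.~2.15]{reyes.cubrh}); your ``double-coset finiteness'' paragraph is gesturing at the same thing but is left unsubstantiated. That is a minor point. The substantial divergence is in $(2)\Rightarrow(1)$.

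For $(2)\Rightarrow(1)$ there is a genuine structural gap in what you wrote. You propose to set $Z_H = \Hull_\calX(\mathcal{Y})$ and then control the hull by controlling combinatorial geodesics between vertices of $\mathcal{Y}$. But the combinatorial convex hull (intersection of all half-spaces containing $\mathcal{Y}$) is a priori strictly larger than the union of geodesics between points of $\mathcal{Y}$, so bounding the latter does not on its own bound $Z_H/H$. The paper sidesteps precisely this issue by invoking \cite[Lem.~4.3]{genevois}, which reduces convex-cocompactness to the statement ``there is $K$ so that every combinatorial geodesic with endpoints in $Hx_0$ lies in $N_K(Hx_0)$'' --- no explicit hull needs to be built. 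Your argument needs either this criterion or the equivalent fact that geodesic quasiconvexity of a subset forces its hull into a uniform neighborhood; without it, the final step (``this dichotomy forces $Z_H/H$ to be compact'') does not follow. Relatedly, once one adopts the geodesic-control criterion, there is no need to seed $\mathcal{Y}$ with the cores $Y_i$: the paper proves the neighborhood estimate directly for geodesics between points of $Hx_0$, and the convex cores enter only in the course of the argument, as cores $Z_{\alpha,\beta}$ for the conjugates $\alpha H\alpha^{-1}\cap\beta P\beta^{-1}$ that arise along the geodesic.

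Finally, what you call the ``hardest step'' is where almost all the work is. The paper transfers the cubical geodesic to a Lipschitz quasigeodesic $\gamma$ in $\Cay(\G,S)$, lets $\hat c$ be an $S\cup\calP$-geodesic with the same endpoints, and for each point of $\gamma$ distinguishes Case~1 (between two consecutive peripheral jumps of $\hat c$) from Case~2 (within a peripheral jump). Case~1 is handled by the tree-graded/asymptotic-cone lemmas of Dru\c{t}u--Sapir (\cite[Lem.~8.14, 8.15, 8.19]{drutu-sapir}) together with Hruska's peripheral machinery, with explicit control of constants; Case~2 is where the convex-cocompactness of $H\cap P$ is actually used, via the finiteness of conjugating pairs of bounded word length and the uniform size of the resulting convex cores. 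Your sketch (``Hruska--Wise-style projection estimates, then transfer to $\calX$'') is the right intuition, but it is not yet an argument: the quantitative case split, the reduction via \eqref{eq.interqc}-type double-coset bounds, and the role of the cores in Case~2 all need to appear explicitly for the proof to close.
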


\begin{proof}
Under these assumptions $\G$ is finitely generated, so fix $S\subset \G$ a finite symmetric generating set and a $\G$-equivariant quasi-isometry $\phi: \calX \ra \Cay(\G,S)$. We also fix a vertex $x_0\in \calX$ such that $\phi(x_0)=o$ is the identity element in $\G$. Let $\bbP$ be a complete collection of representatives of conjugacy classes of maximal parabolic subgroups in $\G$, and let $\calP=(\bigcup \bbP) \bs \{o\}$. We let $d_S$ denote the (graph) word metric on $\Cay(\G,S)$ and let $H<\G$ be any subgroup. 

If $H$ is convex-cocompact, then it is undistorted, hence finitely generated and relatively quasiconvex by \cite[Thm.~1.5]{hruska}. Also, any maximal parabolic subgroup $P<\G$ is convex-cocompact by \cite[Thm.~1.1]{sageev-wise}, and hence $H\cap P$ is also convex-cocompact by \cite[Lem.~2.14 \& Lem.~2.15]{reyes.cubrh}. This proves the implication $(1) \Rightarrow (2)$.

The implication $(2) \Rightarrow (1)$ is more involved, and for its proof we adopt the following convention. 
If $\gam'$ is a parameterized curve and $x=\gam'_{t^-},y=\gam'_{t^+}$ belong to $\gam'$ with $t^-\leq t^+$, then $\gam'|_{[x,y]}=\gam'|_{[y,x]}$ is a set of points of form $\gam'_t$, with $t^-\leq t \leq t^+$ (if there is more than one option for $t^\pm$, we consider any of them).

By \cite[Lem.~4.3]{genevois} it is enough to prove the following: there exists $K>0$ such that if $\ov\gam \subset \calX$ is a (continuous) combinatorial geodesic with endpoints in $H x_0$, then $\ov\gam \subset N_K(H x_0)$. 

To find such $K$, consider constants $L,C$ such that the image under $\phi$ of any combinatorial geodesic $\ov\gam$ in $\calX$ is at Hausdorff distance at most $L$ from an $L$-Lipschitz $(L,C)$-quasigeodesic $\gam:[0,\ell]\ra \Cay(\G,S)$ with same endpoints as $\phi(\ov\gam)$ (see e.g.~\cite[Prop.~8.3.4]{BBI}). 

Let $\ov\gam \subset \calX$ be a geodesic with endpoints in $Hx_0$ and let $\gam=\gam([0,\ell])\subset \Cay(\G,S)$ be as above, so that the endpoints of $\gam$ belong to $H$. Also, let $\hat c$ be a geodesic in $\Cay(\G,S\cup \calP)$ with same endpoints as $\gam$ and let
$\hat c_0,\dots, \hat c_n$ be the (ordered) vertex set of $\hat c$. We define $I=\{j_0<j_i<\cdots <j_k\}$ to be the set of all $0\leq i \leq n-1$ such that $\hat c_{i+1}^{-1}\hat c_i\in \calP$. 
    
    By quasiconvexity of $H$, there exists $\k$ (independent of $\gam$) and $h_i\in H$ such that $d_S(\hat c_i,h_i)\leq \k$ for all $0\leq i \leq n$, see e.g. \cite[Def.~6.10]{hruska}. Also, by \cite[Lem.~8.8]{hruska} there exists $A_0$ depending only on $L,C$ such that for any $0\leq i \leq n$ there exists $c_i=\gam_{t_i}\in \gam$ satisfying $d_S(c_i,\hat c_i)\leq A_0$, for which we assume $c_0=\hat c_0$ and $c_n=\hat c_n$. Up to increasing $A_0$ (only in terms of $L,C$), we can always assume that $c_i$ is a group element.  
    
    Since $\G$ is finitely generated, by \cite[Prop.~9.4]{hruska} there exists $B_0$ depending only on $A_0$ and $\k$ (so only on $L,C$) such that if $g_1,g_2\in \G$ satisfy $|g_1|_S,|g_2|_S\leq A_0+\k$, then 
    \begin{equation}\label{eq.interqc}
        N_{A_0+\k}(g_1H) \cap N_{A_0+\k}(g_2P)\subset N_{B_0}(g_1Hg_1^{-1}\cap g_2Pg_2^{-1})
    \end{equation}
    for any $P\in \bbP$, where the neighborhoods are considered in $\Cay(\G,S)$.

    Let $\wtilde p$ be a geodesic lift of $\hat c$ to $\Cay(\G,S)$. That is, $\wtilde p$ is obtained from $\hat c$ by replacing each edge corresponding to an element of $\calP$ by a geodesic in $\Cay(\G,S)$ with the same endpoints. For a point $x\in \gam$ we distinguish two cases.

    \textbf{Case 1:} $x\in \gam|_{[c_{{j_i}+1},c_{j_{i+1}}]}$ for some $j_i\in I$ (with the convention that $j_{-1}=-1$ and $j_{k+1}=n$). Consider geodesic paths $[c_{{j_i}+1},\hat c_{{j_i}+1}]$ and $[c_{j_{i+1}}, \hat c_{j_{i+1}}]$ in $\Cay(\G,S)$, and the quasigeodesic triangle with sides
    \[\ell_1=[\hat c_{{j_i}+1},c_{{j_i}+1}] \cup \gam|_{[c_{{j_i}+1},x]} , \ \ \ell_2= \gam|_{[x,c_{j_{i+1}}]}  \cup [c_{j_{i+1}}, \hat c_{j_{i+1}}],  \ \ \ell_3= \wtilde p|_{[c_{j_{i+1}}, c_{{j_i}+1}]}.\] 
    We also set 
    $$\ell_1^-=\hat c_{j_i+1}, \ell_1^+=x, \ \ \ell_2^-=\hat x,
      \ell_2^+=\hat c_{j_{i+1}}, \ \ \text{ and } \ \ \ell_3^-=\hat c_{j_{i+1}},  \ell_3^+=\hat c_{j_i+1}.$$
    Note that $\ell_1,\ell_2,\ell_3$ are Lipschitz quasigeodesics with constants depending only on $L,C$ and $A_0$ (hence only on $L,C$). 
    Then by \cite[Lem.~8.19]{drutu-sapir} there exists $R$ depending on $L,C$ such that either:
    \begin{itemize}
        \item there exists $z\in \Cay(\G,S)$ with $d_S(z,\ell_i)\leq R$ for $i=1,2,3$; or,
        \item there exist $g\in \G$ and $P\in \bbP$ such that $d_S(gP,\ell_i)\leq R$ for $i=1,2,3$.
    \end{itemize}
    In the first subcase, let $u_i\in \ell_i$ be such that $d_S(z,u_i)\leq R$. Then $d_S(u_a,u_b)\leq 2R$ for all $1\leq a,b\leq 3$, and since $\gam$ is $(L,C)$-quasigeodesic and $x\in \gam|_{[u_1,u_2]}$, we have that
    $d_S(x,\ell_3)\leq d_S(x,u_3)\leq d_S(x,u_1)+d_S(u_1,u_3)$ is bounded above in terms of $L,C$ and $R$ (thus only in terms of $L,C$). 

   In the second subcase, by \cite[Lem.~8.15]{drutu-sapir} we can find $M$ and $\mathfrak{d}$ depending only on $L,C$ and $R$ (so only on $L,C$) and points $u_i^-,u_i^+\in \ell_i$ for $i=1,2,3$ that satisfy:
   \begin{itemize}
       \item $d_S(u_i^\pm,gP) \leq M$; and,
       \item $\diam(\ell_i|_{[\ell_i^\pm,u_i^\pm]}\cap \ov N_M(gP))\leq \mathfrak{d}$.
   \end{itemize}
    Take $v_i^\pm\in gP$ such that $d_S(u_i^\pm,v_i^\pm)\leq M$. Then by the definition of $I$ 
    and after considering vertices in $\hat c$ that are closest to $u_3^\pm$ in $\Cay(\G,S)$ we get \[d_S(u_3^+,u_3^-)\leq d_{S\cup \calP}(v_3^-,v_3^+)+2(1+M)+2\leq 5+2M.  \]
    Also, \cite[Lem.~8.14]{drutu-sapir} implies the existence of $D_1$ depending only on $L,C,M$ and $\mathfrak{d}$ (so only on $L,C$) with 
    $d_S(u_i^+,u_{i+1}^-)\leq D_1$ (mod 3) for all $i$. In particular, $$d_S(u_1^-,u_2^+)\leq d_S(u_1^-,u_3^+)+d_S(u_3^+,u_3^-)+d_S(u_3^-,u_2^+),$$ and as in the first subcase we conclude that $x$ belongs to a neighborhood of $\ell_3$ depending only on $L$ and $C$.

    In both subcases, we deduce that $d_S(x,\ell_3)$ is bounded in terms of $L,C$, and since $\ell_3$ is contained in a neighborhood of $H$ depending only on $\k$, we have that 
    \begin{equation}\label{eq.case1}
        d_S(x,H)\leq K_0
    \end{equation} for some $K_0$ depending only on $L,C$ and $\k$ (hence only in terms of $L,C$). 

    \textbf{Case 2:} $x\in \gam|_{[c_{j},c_{j+1}]}$ for some $j\in I$. Suppose $\hat c_j^{-1}\hat c_{j+1}=p\in P$ for $P\in \bbP$. 
    Then 
    \begin{align*}
        d_S(c_j^{-1}c_{j+1},(c_j^{-1}h_j)H)  \leq d_S(c_j^{-1}c_{j+1},c_j^{-1}h_{j+1}) 
         = d_S(c_{j+1},h_{j+1})  \leq A_0+\k,
    \end{align*}
    and 
    \begin{align*}
    d_S(c_j^{-1}c_{j+1}, (c_j^{-1}\hat c_j)P)   \leq d_S(c_j^{-1}c_{j+1},c_j^{-1}\hat c_jp) 
     = d_S(c_{j+1},\hat c_{j+1})\leq A_0.
    \end{align*}
Since $\max\{|c_j^{-1}h_j|_S,|c_j^{-1}\hat c_j^{-1}|_S\}\leq A_0+\k$, by \eqref{eq.interqc} we conclude
\begin{equation}\label{eq.case2}
    d_S(c_j^{-1}c_{j+1}, (c_j^{-1}h_j)H(c_j^{-1}h_j)^{-1}\cap (c_j^{-1}\hat c_j)P(c_j^{-1}\hat c_j)^{-1})\leq B_0.
\end{equation}

Note that any point $x\in \gam$ satisfies the assumptions of one of the two cases above. Indeed, for $x=\gam_t\in \gam$, let $I_-$ be the set of all the $j\in I$ such that $c_j$ is not of the form $\gam_{t'}$ with $t'>t$. Suppose first that $I_-$ is non-empty and let $j$ be its maximal element. If $x$ does not satisfy Case 2, then $c_{{j}+1}$ does not belong to $\gam|_{[x,\gam_\ell]}$. But if $j=j_i<j_k$, then $j_{i+1}\notin I_-$, so that $x\in \gam|_{[c_{{j_i}+1},c_{j_{i+1}}]}$ and $x$ satisfies Case 1. 

Also, if $j=j_k$, then $x\in \gam|_{[c_{{j_i}+1},c_n]}$ and $x$ also satisfies Case 1. Therefore, we can assume that $I_-$ is empty. But if $I$ is non-empty then $x\in \gam|_{[c_0,c_{j_1}]}$ and $x$ satisfies Case 1, and if $I$ is empty then 
$x\in \gam|_{[c_0,c_{n}]}$ and $x$ also satisfies Case 1.

Now, take $\ov x\in \ov\gam$ and let $x\in \gam$ within $r$ from $\phi(\ov x)$ in $\Cay(\G,S)$, where $r$ is independent of $\ov x$ and $\ov\gam$. If $x$ satisfies Case 1, by \eqref{eq.case1} we conclude that $d_\calX(\ov x,H x_0)\leq K_1$ for $K_1$ a constant independent of $\ov x$ and $\ov \gam$. 

If $x$ satisfies Case 2, suppose that $x\in \gam|_{[c_j,c_{j+1}]}$ for $j\in I$. Then by \eqref{eq.case2} there exist vertices $\ov x^-,\ov x^+\subset \ov \gam$ satisfying $d_\calX(c_j x_0,\ov x^-)\leq \hat r$ and $d_\calX(c_{j+1} x_0,\ov x^+)\leq \hat r$, where $\hat r$ depends only on $\phi$ and $L,C$. 

Let $F$ be the set of pairs $\al,\beta\in \G$ satisfying $|\al|_S,|\beta|_S\leq A_0+\k$. By our assumption and \cite[Thm.~1.1]{sageev-wise} we can find a convex core $Z_{\al,\beta}\subset \calX$ for the group $\al H\al^{-1}\cap \beta P \beta^{-1}$ that contains the $\hat{r}$-neighborhood of $x_0$. By cocompactness, we can find $K_2>0$ such that \begin{equation*}
    Z_{\al,\beta}\subset N_{K_2}((\al H\al^{-1}\cap \beta P \beta^{-1})x_0)\subset N_{K_2}((\al H\al^{-1})x_0)
\end{equation*} for all $(\al,\beta)\in F$. Note that $K_2$ is independent of $\ov\gam$. In particular we have \begin{align*}
   \ov x \in c_j Z_{c_j^{-1}h_j,c_j^{-1}\hat c_j}\subset c_jN_{K_3}(c_j^{-1} H(h_j^{-1}c_j)x_0)\subset N_{K_2}(Hx_0),
\end{align*}
where $K_3:=K_2+\max\{d_\calX(\al x_0,x_0)\colon |\al|_S\leq A_0+\k\}$ is independent of $\ov x$ and $\ov \gam$. In conclusion, $\ov\gam\subset N_K(H x_0)$ for $K:=\max\{K_1,K_3\}$, and the implication $(2) \Rightarrow (1)$ follows.
\end{proof}


\subsection*{Open access statement}
For the purpose of open access, the authors have applied a Creative Commons Attribution (CC BY) licence to any Author Accepted Manuscript version arising from this submission.

\noindent\small{Department of Mathematics, University of Warwick, Coventry, CV4 7AL, UK}\\
\small{\textit{Email address}: \texttt{stephen.cantrell@warwick.ac.uk}\\
\\
\small{Max Planck Institute for Mathematics, Bonn, Germany, 53111}\\
\small{\textit{Email address}: \texttt{eoregon@mpim-bonn.mpg.de}}\\


\begin{thebibliography}{100}


\bibitem{agol}
I.~Agol, The virtual Haken conjecture. With an appendix by I.~Agol, D.~Groves and J.~F.~Manning. \textit{Doc. Math.} \textbf{18} (2013), 1045--1087.


\bibitem{bergeron-wise}
N.~Bergeron, D.~T.~Wise, A boundary criterion for cubulation. \textit{Amer. J. Math.} \textbf{134} (2012), no. 3, 843--859.

\bibitem{beyrer-fioravanti.1}
J.~Beyrer, E.~Fioravanti, Cross ratios and cubulations of hyperbolic groups. \textit{Math. Ann.} \textbf{384} (2022), no. 3-4, 1547--1592.


\bibitem{beyrer-fioravanti.2}
J.~Beyrer, E.~Fioravanti, Cross-ratios on CAT(0) cube complexes and marked length-spectrum rigidity. \textit{J. Lond. Math. Soc.} (2) \textbf{104} (2021), no. 5, 1973--2015.

\bibitem{BCK}
H.~Bray, R.~Canary and L.-Y.~Kao. Pressure metrics for deformation spaces of quasifuchsian groups with parabolics.
\url{https://arxiv.org/abs/2006.06184}, arXiv preprint, 2020.

\bibitem{BCKM}
H.~Bray, R.~Canary, L.-Y.~Kao and G.~Martone. Counting, equidistribution and entropy gaps at infinity with applications to cusped Hitchin representations.
\url{https://arxiv.org/abs/2102.08552}, arXiv preprint, 2021.

\bibitem{BCV}
C.~Bregman, R.~Charney and K.~Vogtmann, Outer space for RAAGs. \textit{Duke Math. J.} \textbf{172} (2023), no. 6, 1033--1108.

\bibitem{bridson-haefliger}
M.~Bridson, A.~Haefliger, Metric spaces of non-positive curvature. Grundlehren der Mathematischen Wissenschaften, \textbf{319}, Springer-Verlag, Berlin, 1999.

\bibitem{BBI}
D.~Burago, Y.~Burago and S.~Ivanov, A course in metric geometry. Graduate Studies in Mathematics, vol. 33, American Mathematical Society, Providence, RI, 2001.

\bibitem{burger} 
M.~Burger, Intersection, the Manhattan curve, and Patterson-Sullivan theory in rank 2. \textit{Inter. Math. Res. Not.} \textbf{7} (1993), 217--225.

\bibitem{calegari-fujiwara}
D.~Calegari, K.~Fujiwara, Combable functions, quasimorphisms, and the central limit theorem. \textit{Ergodic Theory Dynam. Systems} \textbf{30} (2010), no. 5, 1343--1369.

\bibitem{cannon}
J.~W.~Cannon, The combinatorial structure of cocompact discrete hyperbolic groups. \textit{Geom. Dedicata}  \textbf{16} (1984), no. 2, 123--148.

\bibitem{cantrell.new} S.~Cantrell. Mixing of the Mineyev flow, orbital counting and Poincar\'e series for strongly hyperbolic metrics.
\url{https://arxiv.org/abs/2210.11558}, arXiv preprint, 2022.

\bibitem{cantrell.reyes.2} S.~Cantrell, E.~Reyes. Marked length spectrum rigidity from rigidity on subsets.
\url{https://arxiv.org/abs/2304.13209}, arXiv preprint, 2023.

\bibitem{cantrell.tanaka.2} 
S.~Cantrell, R.~Tanaka. Invariant measures of the topological flow and measures at infinity on hyperbolic groups.
\url{https://arxiv.org/abs/2206.02282}, arXiv preprint, 2022.

\bibitem{cantrell.tanaka.1} 
S.~Cantrell, R.~Tanaka. The Manhattan curve, ergodic theory of topological flows and rigidity.
\url{https://arxiv.org/abs/2104.13451}, arXiv preprint, 2021.

\bibitem{caprace-sageev}
P.-E.~Caprace, M.~Sageev, Rank Rigidity for Cat(0) Cube Complexes. \textit{Geom. Funct. Anal.} \textbf{21} (2011), 851--891.


\bibitem{charney-davis}
R.~Charney, M.~W.~Davis, The $K(\pi, 1)$-problem for hyperplane complements associated to infinite reflection groups. \textit{J. Amer. Math. Soc.} \textbf{8} (1995), no. 3, 597--627.


\bibitem{CSV}
R.~Charney, N.~Stambaugh and K.~Vogtmann, Outer space for untwisted automorphisms of right-angled Artin groups. \textit{Geom. Topol.} \textbf{21} (2017), 1131--1178.

\bibitem{cooper-futer}
D.~Cooper, D.~Futer. Ubiquitous quasi-Fuchsian surfaces in cusped hyperbolic 3–manifolds. \textit{Geom. Topol.} \textbf{23} (2019), no. 1, 241--298.



\bibitem{culler-vogtmann}
M.~Culler, K.~Vogtmann, Moduli of graphs and automorphisms of free groups. \textit{Invent. Math.} \textbf{84} (1986), no. 1, 91--119.

\bibitem{DFW}
F.~Dahmani, D.~Futer and D.~T.~Wise, Growth of quasiconvex subgroups. \textit{Math. Proc. Cambridge Philos. Soc.} \textbf{167}  (2019), no. 3, 505--530.

\bibitem{dai.martone}
X.~Dai, G.~Martone, Correlation of the renormalized Hilbert length for convex projective surfaces. \textit{Ergodic Theory Dynam. Systems} \textbf{43} (2023), 2938--2973.

\bibitem{dalbo} 
F.~Dal’bo, Remarques sur le spectre des longueurs d’une surface et comptages. \textit{Bol. Soc. Brasil. Mat. (N.S.)} \textbf{30} (1999), no. 2, 199--221.

\bibitem{G.E} 
A.~Dembo, O.~Zeitouni, Large Deviations Techniques and Applications. Volume 38 of Applications of Mathematics (New York). Springer-Verlag, New York, second edition, 1998.

\bibitem{drutu-sapir}
C.~Dru\c{t}u, M.~Sapir, Tree-graded spaces and asymptotic cones of groups. With an appendix by D.~V.~Osin and M.~Sapir. \textit{Topology} \textbf{44} (2005), 959--1058.

\bibitem{erlandsson}
V.~Erlandsson, A remark on the word length in surface groups. \textit{Trans. Amer. Math. Soc.} \textbf{327} (2019), 441--455.

\bibitem{farb-margalit}
B.~Farb, D.~Margalit, A Primer on Mapping Class Groups. Princeton Mathematical Series, \textbf{49}, Princeton University Press, 2012.

\bibitem{fioravanti}
E.~Fioravanti, On automorphisms and splittings of special groups. \textit{Compos. Math.} \textbf{159} (2023), no. 2, 232--305.

\bibitem{FLS}
E.~Fioravanti, I.~Levcovitz and M.~ Sageev. Coarse cubical rigidity.
\url{https://arxiv.org/abs/2210.11418}, arXiv preprint, 2022.

\bibitem{genevois}
A.~Genevois, Hyperbolicities in CAT(0) cube complexes. \textit{Enseign. Math.} \textbf{65} (2019), no. 1/2, 33--100.

\bibitem{godelle-paris}
E.~Godelle, L.~Paris, $K(\pi, 1)$ and word problems for infinite type Artin-Tits groups, and applications to virtual braid groups. \textit{Math. Z.} \textbf{272} (2012), 1339--1364.

\bibitem{groves-manning.special}
D.~Groves, J.~F.~Manning, Specializing cubulated relatively hyperbolic groups. \textit{J. Topol.} \textbf{15} (2022), 398--442.

\bibitem{lclt} 
Y.~Guivarc'h, J.~Hardy, Th\'eor\`emes limites pour une classe de chaines de Markov et applications aux diff\'eomorphismes d'Anosov. \textit{Ann. Inst. Henri Poincar\'e Probab. Stat.} \textbf{24} (1988), no. 1, 73--98.

\bibitem{hagen-przytycki}
M.~F.~Hagen, P.~Przytycki, Cocompactly cubulated graph manifolds. \textit{Israel J. Math.} \textbf{207} (2015), no. 1, 377--394.

\bibitem{hagen-wise.1}
M.~F.~Hagen, D.~T.~Wise, Cubulating hyperbolic free-by-cyclic groups: the general case. \textit{Geom. Funct. Anal.} \textbf{25} (2015), 134--179.

\bibitem{hagen-wise.2}
M.~F.~Hagen, D.~T.~Wise, Cubulating hyperbolic free-by-cyclic groups: the irreducible case. \textit{Duke Math. J.} \textbf{165} (2016), no. 9, 1753--1813.

\bibitem{haglund}
F.~Haglund, Isometries of CAT(0) cube complexes are semi-simple. \textit{Ann. Math. Qu\'ebec} (2021). \url{https://doi.org/10.1007/s40316-021-00186-2}

\bibitem{haglund-wise.special}
F.~Haglund, D.~T.~Wise, Special cube complexes. \textit{Geom. Funct. Anal.} \textbf{17} (2008), no. 5, 1551--1620. 

\bibitem{hruska}
G.~C.~Hruska, Relative hyperbolicity and relative quasiconvexity for countable groups. \textit{Algebr. Geom. Topol.} \textbf{10} (2010), no. 3, 1807--1856.

\bibitem{hurwitz} 
A.~Hurwitz, \"Uber die angen\"aherte Darstellung der Irrationalzahlen durch rationale Br\"uche. \textit{Math. Ann.} \textbf{39} (1891) no. 2, 279--284.

\bibitem{kahn-markovic}
J.~Kahn, V.~Markovic, Immersing almost geodesic surfaces in a closed hyperbolic three manifold. \textit{Ann. Math.} (2) \textbf{175} (2012), no. 3, 1127--1190.


\bibitem{kao.3}
L.-Y.~Kao, Entropy rigidity, pressure metric, and immersed surfaces in hyperbolic 3-Manifolds. Thermodynamic Formalism: CIRM Jean-Morlet Chair, Fall 2019, pp. 351--393. Springer, International Publishing, 2021.

\bibitem{kao.1}
L.-Y.~Kao, Manhattan curves for hyperbolic surfaces with cusps. \textit{Ergodic Theory Dynam. Systems} \textbf{40} (2020), no. 7, 1843--1874.

\bibitem{kao.2}
L.-Y.~Kao, Pressure metrics and Manhattan curves for Teichm\"uller spaces of punctured surfaces. \textit{Israel J. Math.} \textbf{240} (2020), no. 2, 567--602.




\bibitem{lauer-wise}
J.~Lauer, D.~T.~Wise, Cubulating one-relator groups with torsion.
\textit{Math. Proc. Cambridge Philos. Soc.} \textbf{155} (2013), no. 3, 411--429.

\bibitem{li-wise}
J.~Li, D.~T.~Wise, No growth-gaps for special cube complexes. \textit{Groups Geom. Dyn.} \textbf{14} (2020), no. 1, 117--135.

\bibitem{marcus.tuncel} 
B.~Marcus, S.~Tuncel, Entropy at a weight-per-symbol and embeddings of Markov chains. \textit{Invent. Math.} \textbf{102} (1990), 235--266.

\bibitem{martin-steenbock}
A.~Martin, M.~Steenbock, A combination theorem for cubulation in small cancellation theory over free products. \textit{Ann. Inst. Fourier (Grenoble)} \textbf{67} (2017), 1613--1670. 

\bibitem{niblo-reeves}
G.~A.~Niblo, L.~D.~Reeves, Coxeter groups act on CAT(0) cube complexes, \textit{J. Group Theory} \textbf{6} (2003), no. 3, 399--413.

\bibitem{odrzygozdz}
T.~Odrzyg\'o\'zd\'z, Cubulating random groups in the square model. \textit{Israel J. Math.} \textbf{227} (2018), no. 2, 623--661.

\bibitem{ollivier-wise}
Y.~Ollivier, D.~T.~Wise, Cubulating random groups at density less than 1/6. \textit{Trans. Amer. Math. Soc.} \textbf{363} (2011), no. 9, 4701--4733.


\bibitem{parry} 
W.~Parry, Intrinsic Markov chains. \textit{Trans. Am. Math. Soc.} \textbf{112} (1964), 55--66.

\bibitem{parry.pollicott} 
W.~Parry, M.~Pollicott, Zeta functions and the periodic orbit structure of hyperbolic dynamics.  \textit{Ast\'erisque} \textbf{187-188} (1990), 1--268.

\bibitem{pollicott.sharp} 
M.~Pollicott, R.~Sharp, Large deviations, fluctuations and shrinking intervals. \textit{Comm. Math. Phys.} \textbf{290} (2009), 321--334.

\bibitem{przytycki-wise.1}
P.~Przytycki, D.~T.~Wise, Graph manifolds with boundary are virtually special. \textit{J. Topol.} \textbf{7} (2014), no. 2, 419--435.

\bibitem{przytycki-wise.2}
P.~Przytycki, D.~T.~Wise, Mixed 3-manifolds are virtually special. \textit{J. Amer. Math. Soc.} \textbf{31} (2018), no. 2, 319--347.

\bibitem{reyes.cubrh}
E.~Reyes, On cubulated relatively hyperbolic groups. \textit{Geom. Topol.} \textbf{27} (2023), 575--640.

\bibitem{sageev.book}
M.~Sageev, CAT(0) cube complexes and groups. In Geometric group theory, volume 21 of \textit{IAS/Park City Math. Ser.}, pages 7--54. Amer. Math. Soc., Providence, RI, 2014.

\bibitem{sageev}
M.~Sageev, Ends of group pairs and non-positively curved cube complexes. \textit{Proc. Lond. Math. Soc.} (3) \textbf{71} (1995), 585--617.

\bibitem{sageev-wise}
M.~Sageev, D.~T.~Wise, Cores for quasiconvex actions. \textit{Proc. Amer. Math. Soc.} \textbf{143} (2015), 2731--2741.

\bibitem{sale-susse}
A.~Sale, T.~Susse, Outer automorphism groups of right-angled Coxeter groups are either large or virtually abelian. \textit{Trans. Am. Math. Soc.} \textbf{372} (2019), no. 11, 7785--7803.

\bibitem{schwartz.sharp} 
R.~Schwartz, R.~Sharp, The correlation of length spectra of two hyperbolic surfaces. \textit{Comm. Math. Phys.} \textbf{154} (1993), 423--430.

\bibitem{sharp.1} 
R.~Sharp, Comparing length functions on free groups. Spectrum and Dynamics, CRM Conference Proceedings and Lecture Notes 52, pp. 185--207, 2010.

\bibitem{sharp.2} 
R.~Sharp, Distortion and entropy for automorphisms of free groups, \textit{Discrete Contin. Dyn. Syst.} \textbf{26} (2010), 347--363.

\bibitem{sharp.3}
R.~Sharp, The Manhattan curve and the correlation of length spectra on hyperbolic surfaces. \textit{Math. Z.} \textbf{228} (1998), 745--750.

\bibitem{sigmund} 
K.~Sigmund, Generic properties of invariant measures for Axiom A diffeomorphisms. \textit{Invent. Math.} \textbf{11} (1970), 99--109.


\bibitem{stucky}
B.~Stucky, Cubulating one-relator products with torsion. \textit{Groups Geom. Dyn.} \textbf{15} (2021), 691--754.

\bibitem{tidmore}
J.~Tidmore, Cocompact cubulations of mixed 3-manifolds. \textit{Groups Geom. Dyn.} \textbf{12} (2018), no. 4, 1429--1460.



\bibitem{wise.small}
D.~T.~Wise, Cubulating small cancellation groups. \textit{Geom. Funct. Anal.} \textbf{14} (2004), no. 1, 150--214.

\bibitem{wise}
D.~T.~Wise, The structure of groups with a quasiconvex hierarchy. Ann. of Math. Stud.. 209, Princeton University Press, Princeton, NJ, 2021.

\bibitem{wise-woodhouse}
D.~T.~Wise, D.~J.~Woodhouse, A cubical flat torus theorem and the bounded packing property. \textit{Israel J. Math.} \textbf{217} (2017), 263--281.

\bibitem{yang}
W.~Yang, Statistically convex-cocompact actions of groups with contracting elements. \textit{Int. Math. Res. Not. IMRN} \textbf{23} (2019), 7259--7323.


\end{thebibliography}
\end{document}